%
\documentclass[12pt, reqno]{amsart}
\usepackage{}
\usepackage{amsmath, amsthm, amscd, amsfonts, amssymb, graphicx, color}
\usepackage[bookmarksnumbered, colorlinks, plainpages]{hyperref}
\hypersetup{colorlinks=true,linkcolor=red, anchorcolor=green, citecolor=cyan, urlcolor=red, filecolor=magenta, pdftoolbar=true}

\textheight 22.5truecm \textwidth 14.5truecm
\setlength{\oddsidemargin}{0.35in}\setlength{\evensidemargin}{0.35in}

\setlength{\topmargin}{-.5cm}

\newtheorem{theorem}{Theorem}[section]
\newtheorem{lemma}[theorem]{Lemma}
\newtheorem{proposition}[theorem]{Proposition}
\newtheorem{corollary}[theorem]{Corollary}
\theoremstyle{definition}
\newtheorem{definition}[theorem]{Definition}

\theoremstyle{remark}

\numberwithin{equation}{section}

\begin{document}
\setcounter{page}{1}

\title[Carleson measure spaces with variable exponents]
{Carleson measure spaces with variable exponents and their applications}

\author[J. Tan]{Jian Tan}

\address{College of Science, Nanjing University of Posts and Telecommunications, Nanjing 210023, China.}
\email{\textcolor[rgb]{0.00,0.00,0.84}{tanjian89@126.com; tj@njupt.edu.cn};
}

\subjclass[2010]{42B25, 42B35, 46E30.}

\keywords{Carleson measure spaces, variable exponents, dual spaces, singular integrals}

\begin{abstract}
In this paper, we introduce the Carleson measure spaces with variable exponents $CMO^{p(\cdot)}$.
By using discrete Littlewood$-$Paley$-$Stein analysis as well as
Frazier and Jawerth's $\varphi-$transform in the variable exponent settings, we show that the dual space of the variable Hardy space $H^{p(\cdot)}$ is $CMO^{p(\cdot)}$.
As applications, we obtain
that Carleson measure spaces with variable exponents $CMO^{p(\cdot)}$, Campanato space with variable exponent
$\mathfrak{L}_{q,p(\cdot),d}$ and H\"older-Zygmund spaces with variable exponents $\mathcal {\dot{H}}_d^{p(\cdot)}$ coincide as sets and the corresponding norms are equivalent. Via
using an argument of weak
density property, we also prove the boundedness
of
Calder\'{o}n-Zygmund singular integral operator acting
on $CMO^{p(\cdot)}$.
\end{abstract} \maketitle

\section{Introduction}
The Hardy and $BMO$ spaces have been
playing a crucial role in modern harmonic analysis since
the early groundbreaking work in Hardy space theory
came from Coifman, Fefferman, Stein and Weiss
in \cite{C,CWe,FS}. In \cite{FS}, Fefferman and Stein obtained
that the space of functions of bounded mean oscillation, $BMO$,
is the dual space of the Hardy space $H^1$ and that the space
$BMO$ can be characterized by Carleson measure, which suggests
that one could use the generalized Carleson measure to characterize
the dual of the Hardy space. For this purpose, we introduce the
Carleson measure spaces $CMO^{p(\cdot)}$ that generalize $CMO^p$
and $BMO$. Note that Carleson measure spaces $CMO^p$ have been studied
in \cite{HLL,LL,LLL,L,LW}.
However, the theory of
Carleson measure spaces is
still unknown in the variable exponent settings.

The main goal of this paper is to develop a complete theory
for the dual spaces of variable Hardy spaces $H^{p(\cdot)}$.
Before stating our main results, we begin with the definition
of Lebesgue and Hardy spaces with variable exponent and some
notations.

For any Lebesgue measurable function $p(\cdot):
\mathbb R^n\rightarrow (0,\infty]$ and for any
measurable subset $E\subset \mathbb{R}^n$, we denote
$p^-(E)= \inf_{x\in E}p(x)$ and $p^+(E)= \sup_{x\in E}p(x).$
Especially, we denote $p^-=p^{-}(\mathbb{R}^n)$ and $p^+=p^{+}(\mathbb{R}^n)$.
Let $p(\cdot)$: $\mathbb{R}^n\rightarrow(0,\infty)$ be a measurable
function with $0<p^-\leq p^+ <\infty$ and $\mathcal{P}^0$
be the set of all these $p(\cdot)$.

\begin{definition}\cite{CF,DHHR,KR}\label{s1de1}\quad
Let $p(\cdot):\mathbb R^n\rightarrow (0,\infty]$
be a Lebesgue measurable function.
The variable Lebesgue space $L^{p(\cdot)}$ consisits of all
Lebesgue measurable functions $f$, for which the quantity
$\int_{\mathbb{R}^n}|\varepsilon f(x)|^{p(x)}dx$ is finite for some
$\varepsilon>0$ and
$$\|f\|_{L^{p(\cdot)}}=\inf{\left\{\lambda>0: \int_{\mathbb{R}^n}\left(\frac{|f(x)|}{\lambda}\right)^{p(x)}dx\leq 1 \right\}}.$$
\end{definition}

We also recall the following class of
exponent function, which can be found in \cite{D}.
Let $\mathcal{B}$ be the set of $p(\cdot)\in \mathcal{P}$ such that the
Hardy-littlewood maximal operator $M$ is bounded on  $L^{p(\cdot)}$.
An important subset of $\mathcal{B}$ is $LH$
condition.

In the study of variable exponent function spaces it is common
to assume that the exponent function $p(\cdot)$ satisfies $LH$
condition.
We say that $p(\cdot)\in LH$, if $p(\cdot)$ satisfies

 $$|p(x)-p(y)|\leq \frac{C}{-\log(|x-y|)} ,\quad |x-y| \leq 1/2$$
and
 $$|p(x)-p(y)|\leq \frac{C}{\log|x|+e} ,\quad |y|\geq |x|.$$

It is well known
that $p(\cdot)\in \mathcal{B}$ if $p(\cdot)\in \mathcal{P}\cap LH.$
Denote by $\mathcal S=\mathcal S(\mathbb R^n)$ the collection
of rapidly decreasing $C^{\infty}$ function on $\mathbb R^n$.
Also, denote by $\mathcal S_\infty$ the functions $f\in\mathcal S$ satisfying
$\int_{\mathbb R^n}f(x)x^\alpha dx=0$ for all muti-indices $\alpha\in \mathbb Z_+^n:=(\{0,1,2,\cdots\})^n$
and $\mathcal S'_\infty$ its topological dual space.
For $f\in \mathcal S'_\infty$,
we recall the definition of the Littlewood-Paley-Stein square function
\begin{align*}
\mathcal{G}(f)(x):=\left(\sum_{j\in \mathbb Z}|\psi_j\ast f(x)|^2\right)^{1/2},
\end{align*}

and the discrete Littlewood-Paley-Stein square function
\begin{align*}
\mathcal{G}^d(f)(x):=\left(\sum_{j\in \mathbb Z}
\sum_{\mathbf k\in\mathbb Z^n}|\psi_j\ast f(2^{-j}\mathbf k)|^2\chi_Q(x)\right)^{1/2},
\end{align*}
where $Q$ denote dyadic cubes in $\mathbb R^n$ with side-lengths $2^{-j}$ and the
lower left-corners of $Q$ are $2^{-j}\mathbf k$.
We also
recall the definition of variable Hardy spaces ${H}^{p(\cdot)}$ as follows.

\begin{definition}\label{s1de2}~(\cite{CW,NS})\quad
Let $f\in \mathcal{S'}$,
$\psi\in \mathcal S$, $p(\cdot)\in {\mathcal{P}^0}$
and $\psi_t(x)=t^{-n}\psi(t^{-1}x)$, $x\in \mathbb{R}^n$.
Denote by $\mathcal{M}$ the grand maximal operator given by
$\mathcal{M}f(x)= \sup\{|\psi_t\ast f(x)|: t>0,\psi \in \mathcal{F}_N\}$ for any fixed large integer $N$,
where $\mathcal{F}_N=\{\varphi \in \mathcal{S}:\int\varphi(x)dx=1,\sum_{|\alpha|\leq N}\sup(1+|x|)^N|\partial ^\alpha \varphi(x)|\leq 1\}$.
The variable Hardy space ${H}^{p(\cdot)}$ is the set of all $f\in \mathcal{S}^\prime$, for which the quantity
$$\|f\|_{{H}^{p(\cdot)}}=\|\mathcal{M}f\|_{{L}^{p(\cdot)}}<\infty.$$
\end{definition}

Throughout this paper, $C$ or $c$ denotes a positive constant that may vary at each occurrence
but is independent to the main parameter, and $A\sim B$ means that there are constants
$C_1>0$ and $C_2>0$ independent of the the main parameter such that $C_1B\leq A\leq C_2B$.
Given a measurable set $S\subset \mathbb{R}^n$, $|S|$ denotes the Lebesgue measure and $\chi_S$
means the characteristic function.
We also use the notations $j\wedge j'=\min\{j,j'\}$ and $j\vee j'=\max\{j,j'\}$.
Fix an integer $d\geq d_{p(\cdot)}\equiv \min\{d\in \mathbb{N}\bigcup\{0\}: p^-(n+d+1)>n\}.$
A function $a$ on $\mathbb{R}^n$ is called a $(p(\cdot),q)$-atom, if there exists a cube $Q$
such that
${\rm supp}\,a\subset Q$;
$\|a\|_{L^q}\leq \frac{|Q|^{1/q}}{\|\chi_{Q}\|_{L^{p(\cdot)}(\mathbb{R}^n)}}$;
$\int_{\mathbb{R}^n} a(x)x^\alpha dx=0\; {\rm for}\;|\alpha| \leq d$.
We say that a cube $Q\subset\mathbb R^n$ is dyadic if
$Q=Q_{j{\bf k}}=\{x=(x_1,x_2,\ldots,x_n)\in \mathbb R^n
:2^{-j-N}k_i\le x_i< 2^{-j-N}(k_i+1),i=1,2,\ldots,n\}$ for
some $j\in\mathbb Z$, some fixed positive large integer $N$ and ${\bf k}=(k_1,k_2,\ldots,k_n)\in\mathbb Z^n$.
Denote by $\ell(Q)=2^{-j}$ the side length of $Q=Q_{j{\bf k}}$.
Denote by $z_Q=2^{-j}{\bf k}$
the left lower corner of $Q$
and by $x_Q$ is any point in $Q$ when $Q=Q_{j{\bf k}}$. For any function
$\phi$ defined on $\mathbb R^n,$ $j\in\mathbb Z$, and $Q=Q_{j{\bf k}}$, set
\begin{align*}
&\psi_j(x)=2^{jn}\psi(2^{j}x),\\
&\tilde{\psi}(x)=\overline{\psi(-x)}\\
&\psi_Q(x)=|Q|^{1/2}\psi_j({x-z_Q}).
\end{align*}

The remainder of this paper is organized
as follows. In Section 2 we introduce the precise
definition of the Carleson measure space $CMO^{p(\cdot)}$
and establish the Plancherel-P\^{o}lya inequality for such space.
In Section 3, we introduce sequence spaces with variable exponents
$s^{p(\cdot)}$ and $c^{p(\cdot)}$ and obtain the duality
of the variable Hardy space $H^{p(\cdot)}$ with $CMO^{p(\cdot)}$ by a constructive proof, which is the heart of the present paper.
We show that Carleson measure spaces with variable exponents $CMO^{p(\cdot)}$, Campanato space with variable exponent
$\mathfrak{L}_{q,p(\cdot),d}$ and H\"older-Zygmund spaces with variable exponents $\mathcal H_d^{p(\cdot)}$ coincide as sets and the corresponding norms are equivalent in Section 4.
In Section 5, we discuss the boundedness of
Calder\'{o}n-Zygmund singular integral operators
on $CMO^{p(\cdot)}$ via
using an argument of weak
density property.

\section{Carleson measure space with variable exponent}

In this section, we introduce the Carleson measure space $CMO^{p(\cdot)}$.
To set notation,
let $\varphi,\psi\in\mathcal S$
satisfy
\begin{align}\label{s2d1}
\begin{split}
\mbox{supp}(\hat\varphi,\hat\psi)
\subset\{\xi\in\mathbb R^n:1/2\le|\xi|\le2\},\\
|\hat\varphi,\hat\psi(\xi)|\ge C>0\quad \mbox{if}\; \frac{3}{5}\le
\xi\le \frac{5}{3}\\
\noindent\mbox{and}\quad\quad\qquad
\sum_{j\in\mathbb Z}\overline{\hat\varphi(2^j\xi)}{\hat\psi(2^j\xi)}=1
\quad \mbox{if}\; \xi\neq0.
\end{split}
\end{align}

First we recall the well-known discrete Calder\'on identity
introduced by Frazier and
Jawerth \cite{FJ1}.

\begin{lemma}\label{s2l1}\quad Let $\varphi,\psi\in \mathcal S$
satisfy (\ref{s2d1}). Then
\begin{align*}
f(x)=\sum_{j\in \mathbb Z}2^{-jn}\sum_{\mathbf k\in\mathbb Z^n}\tilde\varphi_j\ast f(2^{-j}\mathbf{k})\psi_j(x-2^{-j}\mathbf{k})
=\sum_{Q}\left<f,\varphi_Q\right>\psi_Q(x),
\end{align*}
where
the series converges in $\mathcal S_\infty$
for all $f\in \mathcal {S_\infty}$.
Furthermore, the convergence of the right-hand, as well as the equality, is in $\mathcal{S}_\infty'$.
\end{lemma}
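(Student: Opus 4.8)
The plan is to deduce the discrete identity from the classical \emph{continuous} Calder\'on reproducing formula by an exact sampling (Poisson summation) argument, the exactness being forced by the band-limiting hypothesis in (\ref{s2d1}) as soon as one samples on a lattice finer than the natural scale of each frequency block. First observe that the two displayed sums coincide: writing the dyadic cubes as $Q=Q_{j\mathbf k}$ of side $2^{-j-N}$, so $z_Q=2^{-j-N}\mathbf k$ and $|Q|=2^{-(j+N)n}$, and using $\tilde\varphi_j(y)=\overline{\varphi_j(-y)}$, one checks directly that $\langle f,\varphi_Q\rangle=|Q|^{1/2}(\tilde\varphi_j\ast f)(z_Q)$ and $\psi_Q(x)=|Q|^{1/2}\psi_j(x-z_Q)$, whence
$$\langle f,\varphi_Q\rangle\,\psi_Q(x)=|Q|\,(\tilde\varphi_j\ast f)(z_Q)\,\psi_j(x-z_Q)=2^{-(j+N)n}\,(\tilde\varphi_j\ast f)(2^{-j-N}\mathbf k)\,\psi_j\big(x-2^{-j-N}\mathbf k\big).$$
Thus it suffices to show that this common sum equals $f$, in $\mathcal S_\infty$ when $f\in\mathcal S_\infty$ and in $\mathcal S'_\infty$ when $f\in\mathcal S'_\infty$.

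\emph{Step 1 (continuous reproducing formula).} Since $\widehat{\psi_j}(\xi)=\hat\psi(2^{-j}\xi)$ and $\widehat{\tilde\varphi_j}(\xi)=\overline{\hat\varphi(2^{-j}\xi)}$, the last identity in (\ref{s2d1}) reads $\sum_{j}\widehat{\tilde\varphi_j}(\xi)\widehat{\psi_j}(\xi)=1$ for $\xi\neq0$, and multiplying by $\hat f$ gives $f=\sum_{j\in\mathbb Z}\psi_j\ast\tilde\varphi_j\ast f$. For $f\in\mathcal S_\infty$ this converges in $\mathcal S_\infty$: the standard cancellation estimates — Taylor expanding $f$ against the infinitely many vanishing moments of $\tilde\varphi_j$ when $j\ge0$, and $\tilde\varphi_j$ against the vanishing moments of $f\in\mathcal S_\infty$ when $j<0$ — give, for every $M,L>0$,
$$|(\tilde\varphi_j\ast f)(x)|\le C_{M,L}\,2^{-|j|M}(1+|x|)^{-L},$$
and similarly for $\psi_j\ast\tilde\varphi_j\ast f$ and its derivatives.

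\emph{Step 2 (exact discretization).} Fix $j$ and set $g=\tilde\varphi_j\ast f$, so that $\hat g=\widehat{\tilde\varphi_j}\,\hat f$ is supported in the annulus $A_j=\{\xi:2^{j-1}\le|\xi|\le2^{j+1}\}$. Poisson summation on the lattice $2^{-j-N}\mathbb Z^n$ gives
$$2^{-(j+N)n}\sum_{\mathbf k\in\mathbb Z^n}g(2^{-j-N}\mathbf k)\,e^{-2\pi i\,2^{-j-N}\mathbf k\cdot\xi}=\sum_{\mathbf m\in\mathbb Z^n}\hat g(\xi-2^{j+N}\mathbf m).$$
If $N\ge3$ then $2^{j+N}-2^{j+1}>2^{j+1}$, so whenever $\mathbf m\neq0$ and $\hat g(\xi-2^{j+N}\mathbf m)\neq0$ one has $|\xi|>2^{j+1}$ and hence $\xi\notin A_j\supseteq\operatorname{supp}\widehat{\psi_j}$; multiplying the previous display by $\widehat{\psi_j}(\xi)$ therefore leaves only the term $\mathbf m=0$, and the Fourier transform of $2^{-(j+N)n}\sum_{\mathbf k}g(2^{-j-N}\mathbf k)\,\psi_j(\,\cdot-2^{-j-N}\mathbf k\,)$ equals $\widehat{\psi_j}(\xi)\hat g(\xi)=\widehat{\psi_j\ast\tilde\varphi_j\ast f}(\xi)$. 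So the scale-$j$ block of the discrete sum equals $\psi_j\ast\tilde\varphi_j\ast f$, and Step 1 gives the identity.

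\emph{Step 3 (convergence; the main point).} For $f\in\mathcal S_\infty$, combining the bound of Step 1 with $|\partial^\alpha\psi_j(z)|\le C\,2^{j(n+|\alpha|)}(1+2^j|z|)^{-L'}$ and summing geometric series in $|j|$ (choose $M$ large) and in $|\mathbf k|$ (using the spatial decay of both factors) shows that the partial sums of $\sum_Q\langle f,\varphi_Q\rangle\psi_Q$ are Cauchy in every Schwartz seminorm; they therefore converge in $\mathcal S_\infty$, with limit $f$ by Step 2. For $f\in\mathcal S'_\infty$, each $\varphi_Q,\psi_Q$ belongs to $\mathcal S_\infty$ (translation preserves vanishing moments), so $\langle f,\varphi_Q\rangle$ is well defined; pairing the sum with $g\in\mathcal S_\infty$, writing $g=\sum_Q\langle g,\psi_Q\rangle\varphi_Q$ (the pair $(\psi,\varphi)$ also satisfies (\ref{s2d1})), and interchanging the sum with the $\mathcal S'_\infty$--$\mathcal S_\infty$ pairing — legitimate by the $\mathcal S_\infty$-convergence just established — gives $\big\langle\sum_Q\langle f,\varphi_Q\rangle\psi_Q,\,g\big\rangle=\langle f,g\rangle$. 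The one genuinely delicate point is Step 2: one must sample on a lattice \emph{finer} than the scale of the block — this is exactly the role of the fixed large integer $N$ in the definition of the dyadic cubes — so that the periodization of $\hat g$ causes no aliasing on $\operatorname{supp}\widehat{\psi_j}$; the remaining steps are routine bookkeeping with band-limited, moment-cancelling Schwartz functions.
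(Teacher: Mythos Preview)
The paper does not prove this lemma at all: it is simply quoted from Frazier and Jawerth \cite{FJ1}, so there is no ``paper's proof'' to compare against. Your argument --- continuous Calder\'on reproducing formula plus an exact sampling/Poisson-summation step exploiting the band-limited annular support --- is precisely the Frazier--Jawerth proof, and the convergence discussion in Step~3 is the standard one.

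One small inconsistency to flag: the lemma as stated samples at the points $2^{-j}\mathbf k$ with weights $2^{-jn}$, \emph{not} at $2^{-j-N}\mathbf k$ with weights $2^{-(j+N)n}$ as you write when matching the two displayed sums. Whether an oversampling factor $N$ is needed in Step~2 depends on the Fourier convention. Frazier--Jawerth use $\hat f(\xi)=\int f(x)e^{-ix\cdot\xi}\,dx$, so sampling on $2^{-j}\mathbb Z^n$ periodizes $\hat g$ by $2\pi\cdot 2^j\mathbb Z^n$; since $2\pi\cdot 2^j-2^{j+1}>2^{j+1}$, the aliased copies already miss $\operatorname{supp}\widehat{\psi_j}$ and no $N$ is required, which matches the lemma's statement. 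Your computation implicitly uses the $e^{-2\pi i x\cdot\xi}$ convention, where periodization is by $2^{j}\mathbb Z^n$ and aliasing \emph{does} occur at the boundary of the annulus unless you refine the lattice; your fix $N\ge 3$ then does the job. So the extra $N$ in your write-up is a convention artifact rather than a mathematical gap, but as written your opening paragraph does not literally reconcile the two sums in the lemma --- you have shifted the lattice. Once the convention is aligned with the statement, the proof is correct and is the standard one.
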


We would like to point out that functions $\varphi,\psi\in \mathcal S$
used in
the discrete Calder\'on identity do not have compact support. To prove the main result, we will also need the following new discrete Calder\'on-type identity, which, for the setting of spaces of homogeneous type, was first used in \cite{DH}.

\begin{lemma}\cite{T17AFA}\label{s2le2}\quad Suppose that $p(\cdot)\in LH$. Let $\phi$ be Schwartz functions with support
on the unit ball satisfying the conditions: for all $\xi\in\mathbb R^n$,
\begin{align*}
\sum_{j\in\mathbb Z}|\widehat\phi(2^{-j}\xi)|^2=1
\end{align*}
and $\int_{\mathbb R^n}\phi(x)x^\alpha dx=0$ for
all $0\le|\alpha|\le M$.
Then for all $f\in H^{p(\cdot)}\cap L^q$, $1<q<\infty$, there exists a function
$h\in H^{p(\cdot)}\cap L^{q}$ with
\begin{align*}
\|f\|_{L^{q}}\sim \|h\|_{L^{q}}
\quad\mbox{and}\quad\|f\|_{H^{p(\cdot)}}\sim \|h\|_{H^{p(\cdot)}}
\end{align*}
such that for some large integer $N$ depending on $\phi$ and $p(\cdot),q,M$,
\begin{align*}
f(x)=\sum_{j\in \mathbb Z}\sum_{Q}|Q|\phi_j\ast
h(x_Q)\phi_j(x-x_Q),
\end{align*}
where the series converges in both norms of $L^q$
and $H^{p(\cdot)}$.
\end{lemma}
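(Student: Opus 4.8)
The plan is to derive this identity from the continuous Calder\'on reproducing formula attached to $\phi$ by a Coifman-type discretization, in which the error produced by the discretization is absorbed into the auxiliary function $h$. Since $\sum_{j\in\mathbb Z}|\widehat\phi(2^{-j}\xi)|^2=1$, one has the continuous identity
\[
f=\sum_{j\in\mathbb Z}\phi_j\ast\phi_j\ast f ,
\]
the series converging in $L^q$ by Littlewood--Paley theory and in $H^{p(\cdot)}$ because $p(\cdot)\in LH$ makes $M$ bounded on $L^{p(\cdot)}$ (the vanishing moments of $\phi$, which force $\widehat\phi(0)=0$, control the low-frequency tails). Decomposing $\mathbb R^n$, for each scale $j$, into the disjoint dyadic cubes $Q$ of side length $2^{-j-N}$ and replacing $\phi_j\ast f(y)$ and $\phi_j(x-y)$ for $y\in Q$ by their values at the point $x_Q$, I am led to the operator
\[
T_Nf(x):=\sum_{j\in\mathbb Z}\sum_{Q}|Q|\,\phi_j\ast f(x_Q)\,\phi_j(x-x_Q),
\]
so that $f=T_Nf+R_Nf$ with $R_N:=\mathrm{Id}-T_N$. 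The assertion follows once $R_N$ is shown to have operator norm strictly less than $1$ on both $L^q$ and $H^{p(\cdot)}$ provided $N$ is chosen sufficiently large.

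The core of the argument is the kernel analysis of $R_N$. Writing $\phi_j\ast f(y)=\int\phi_j(y-z)f(z)\,dz$, the operator $R_N$ has kernel
\[
R_N(x,z)=\sum_{j\in\mathbb Z}\sum_{Q}\int_Q\bigl[\phi_j(x-y)\phi_j(y-z)-\phi_j(x-x_Q)\phi_j(x_Q-z)\bigr]\,dy .
\]
Using the compact support and smoothness of $\phi$ and the bound $|y-x_Q|\le\sqrt n\,2^{-j-N}$ on $Q$, a mean value estimate on the bracket (gaining one derivative of $\phi_j$, hence a factor $2^{-N}$) gives, for every prescribed $L$, a constant $C$ \emph{independent of} $N$ with $|R_N(x,z)|\le C\,2^{-N}|x-z|^{-n}$, together with the companion H\"older estimates in $x$ and in $z$ carrying the same gain $2^{-N}$, and the cancellation $\int R_N(x,z)\,dz=\int R_N(x,z)\,dx=0$. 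Splitting $R_N=\sum_j R_N^{(j)}$, the vanishing moments of $\phi$ supply the almost-orthogonality $\|R_N^{(j)}(R_N^{(j')})^{\ast}\|_{2\to2}^{1/2}+\|(R_N^{(j)})^{\ast}R_N^{(j')}\|_{2\to2}^{1/2}\le C\,2^{-N}2^{-\epsilon|j-j'|}$, so Cotlar--Stein yields $\|R_N\|_{2\to2}\le C2^{-N}$; combined with the kernel bounds, Calder\'on--Zygmund theory then gives $\|R_N\|_{L^q\to L^q}\le C_q2^{-N}$ for each $1<q<\infty$.

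For the $H^{p(\cdot)}$ estimate neither duality nor interpolation is available, so I would argue through the grand-maximal (equivalently, square-function) characterization of $H^{p(\cdot)}$. One option is to check that $R_N$ sends every $(p(\cdot),q)$-atom to a fixed $C2^{-N}$-multiple of a $(p(\cdot),q)$-molecule of high order -- admissible since $M$ is taken large relative to $n/p^-$ -- and then to sum over the atomic decomposition of $H^{p(\cdot)}$; another is to dominate $\mathcal M(R_Nf)$ pointwise by a superposition over $j$ of Hardy--Littlewood maximal functions at scale $2^{-j}$ with coefficients summing to $C2^{-N}$ and to invoke the boundedness of $M$ on $L^{p(\cdot)}$, the variable-exponent Fefferman--Stein vector-valued maximal inequality, and the $r$-trick with $0<r<p^-$ to absorb the quasi-triangle inequality when $p^-\le1$. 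Either way one obtains $\|R_N\|_{H^{p(\cdot)}\to H^{p(\cdot)}}\le C2^{-N}$ with $C$ independent of $N$.

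Fixing now $N$ so large that $\|R_N\|_{L^q\to L^q}<1$ and $\|R_N\|_{H^{p(\cdot)}\to H^{p(\cdot)}}<1$, the operator $T_N=\mathrm{Id}-R_N$ is invertible on $L^q$ and on $H^{p(\cdot)}$ with $T_N^{-1}=\sum_{m\ge0}R_N^m$, the Neumann series converging in each operator norm and satisfying $\|T_N^{-1}g\|_{L^q}\sim\|g\|_{L^q}$ and $\|T_N^{-1}g\|_{H^{p(\cdot)}}\sim\|g\|_{H^{p(\cdot)}}$. Setting $h:=T_N^{-1}f$ then produces $h\in H^{p(\cdot)}\cap L^q$ with the stated norm equivalences and
\[
f=T_Nh=\sum_{j\in\mathbb Z}\sum_{Q}|Q|\,\phi_j\ast h(x_Q)\,\phi_j(x-x_Q),
\]
the double series converging in both $L^q$ and $H^{p(\cdot)}$ by the convergence of the continuous Calder\'on formula for $h$ and the uniform boundedness of the truncations of $T_N$. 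The main obstacle is the $H^{p(\cdot)}$-bound for $R_N$: one has to extract the decay $2^{-N}$ while keeping the constant uniform in $N$, using only the maximal/square-function and atomic machinery available in the variable-exponent setting, so that the range $p^-\le1$ and the absence of a usable duality are treated correctly.
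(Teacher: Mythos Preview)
The paper does not actually prove this lemma; it is quoted verbatim from the author's earlier work \cite{T17AFA} and stated without proof here. So there is no ``paper's own proof'' to compare against.

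That said, your outline is the standard Coifman-type discretization argument (as in Deng--Han \cite{DH}, which the paper also cites for related identities), and it is the approach one expects \cite{T17AFA} to follow: begin from the continuous Calder\'on formula $f=\sum_j\phi_j\ast\phi_j\ast f$, discretize the inner convolution over a fine dyadic grid of side $2^{-j-N}$ to produce $T_N$, show the remainder $R_N=\mathrm{Id}-T_N$ has operator norm $\le C2^{-N}$ on both $L^q$ and $H^{p(\cdot)}$, and invert $T_N$ by a Neumann series to obtain $h=T_N^{-1}f$. Your kernel analysis of $R_N$ and the Cotlar--Stein step for the $L^q$ bound are correct. For the $H^{p(\cdot)}$ bound, the second option you mention---controlling the discrete square function of $R_Nf$ by the discrete square function of $f$ via almost-orthogonality plus the vector-valued maximal inequality (Lemma~\ref{s2p2})---is the route that matches the machinery in \cite{T17AFA}; the atom-to-molecule route would also work but requires setting up molecular $H^{p(\cdot)}$ theory, which is heavier than needed. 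Either way, the constant in front of $2^{-N}$ is uniform because the smoothness and moment parameters of $R_N$'s kernel are fixed independently of $N$, so your concern about uniformity is addressed by the standard estimates.
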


We also recall the key estimate for norms of characteristic functions on variable
Lebesgue spaces.
\begin{lemma}[\cite{I}]\label{s2l2}Let $p(\cdot)\in \mathcal{B}$,
then there exist constant $C>0$ such that for all balls $B$ in
$\mathbb{R}^n$ and all measurable subsets $S\subset B$,\\
$$\displaystyle\frac{\|\chi_B\|_{L^{p(\cdot)}}}
{\|\chi_S\|_{L^{p(\cdot)}}}\leq C\frac{|B|}{|S|}$$.
\end{lemma}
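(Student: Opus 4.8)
The plan is to derive the estimate directly from the one structural hypothesis available, namely that $p(\cdot)\in\mathcal{B}$, i.e.\ that the Hardy--Littlewood maximal operator $M$ is bounded on $L^{p(\cdot)}$; everything else is elementary. The key observation is a pointwise lower bound for $M(\chi_S)$ on $B$: for any $x\in B$, the ball $B$ is itself admissible in the supremum defining $M(\chi_S)(x)$, so, using $S\subset B$,
\[
M(\chi_S)(x)\ \ge\ \frac{1}{|B|}\int_{B}\chi_S(y)\,dy\ =\ \frac{|S\cap B|}{|B|}\ =\ \frac{|S|}{|B|}.
\]
Equivalently, $\frac{|S|}{|B|}\,\chi_B\le M(\chi_S)$ pointwise on $\mathbb R^n$.

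Next I would push this inequality through the Luxemburg norm. Since $\|\cdot\|_{L^{p(\cdot)}}$ is monotone with respect to pointwise a.e.\ domination of nonnegative functions --- which is immediate from the definition of the modular $\int_{\mathbb R^n}(|f(x)|/\lambda)^{p(x)}\,dx$ appearing in Definition~\ref{s1de1} --- and is positively homogeneous, the displayed pointwise bound gives
\[
\frac{|S|}{|B|}\,\|\chi_B\|_{L^{p(\cdot)}}\ \le\ \bigl\|M(\chi_S)\bigr\|_{L^{p(\cdot)}}.
\]
Now invoke $p(\cdot)\in\mathcal{B}$: there is $C=C(n,p(\cdot))>0$ with $\|M(\chi_S)\|_{L^{p(\cdot)}}\le C\,\|\chi_S\|_{L^{p(\cdot)}}$. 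Since $|S|>0$ forces $\|\chi_S\|_{L^{p(\cdot)}}>0$, dividing through yields
\[
\frac{\|\chi_B\|_{L^{p(\cdot)}}}{\|\chi_S\|_{L^{p(\cdot)}}}\ \le\ C\,\frac{|B|}{|S|},
\]
which is the assertion; the case $|S|=0$ is vacuous.

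There is no serious obstacle here --- the whole content is packaged into the hypothesis $p(\cdot)\in\mathcal{B}$. The only point to be mildly careful about is to phrase the argument so that it applies uniformly to every measurable $S\subset B$ with $|S|>0$, which the proof above does, since the constant $C$ is just the operator norm of $M$ on $L^{p(\cdot)}$ and depends neither on $B$ nor on $S$. If one preferred to state the lemma under the more concrete assumption $p(\cdot)\in\mathcal{P}\cap LH$, the same proof works verbatim after recalling the inclusion $\mathcal{P}\cap LH\subset\mathcal{B}$ quoted earlier in this section.
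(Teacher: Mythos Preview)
Your argument is correct and is in fact the standard proof of this estimate; the paper itself does not prove the lemma but simply quotes it from \cite{I}, and the proof there follows exactly the route you take. One small point worth tightening: when you write that ``the ball $B$ is itself admissible in the supremum defining $M(\chi_S)(x)$'', this is literally true only for the uncentered maximal operator. If $M$ denotes the centered Hardy--Littlewood maximal function and $B=B(x_0,r)$, then for $x\in B$ one uses instead $B\subset B(x,2r)$ to get $M(\chi_S)(x)\ge 2^{-n}\,|S|/|B|$; the extra factor $2^{-n}$ is harmless and is absorbed into $C$. With that cosmetic adjustment the proof is complete.
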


We also need the following generalized H\"{o}lder inequality on variable Lebesgue spaces.

\begin{lemma}\label{s2le5} \cite{CF,TLZ}\quad Given exponent function $p_i(\cdot)\in \mathcal{P}^0,$ define
$p(\cdot)\in \mathcal{P}^0$ by
$$\frac{1}{p(x)}=\sum_{i=1}^m\frac{1}{p_i(x)},$$
where $i=1,2,\ldots,m.$
Then for all $f_i\in L^{p_i(\cdot)}$ and
$f_i\in L^{p(\cdot)}$ and
$$\|\prod_{i=1}^mf_i\|_{p(\cdot)}\leq C\prod_{i=1}^m\|f_i\|_{p_i(\cdot)}.$$
\end{lemma}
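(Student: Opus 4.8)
The plan is to deduce the general case from the bilinear one ($m=2$) by an elementary induction, and to prove the bilinear estimate by combining the pointwise Young inequality with the rescaling relation between the modular $\int|\cdot|^{p(x)}\,dx$ and the Luxemburg norm of Definition \ref{s1de1}. This is the standard argument (see \cite{CF,TLZ}), so I only indicate the steps.

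\emph{Reduction to $m=2$.} Assuming the two-factor inequality, suppose the claim holds for $m-1$ factors. Given $p_1(\cdot),\dots,p_m(\cdot)\in\mathcal P^0$, set $1/q(x)=\sum_{i=1}^{m-1}1/p_i(x)$; then $q(\cdot)\in\mathcal P^0$, since $1/q(x)\le\sum_{i=1}^{m-1}1/p_i^-$ forces $q^-\ge(\sum_{i=1}^{m-1}1/p_i^-)^{-1}>0$, while $q(x)\le p_j(x)$ for each $j\le m-1$ gives $q^+\le\min_{i\le m-1}p_i^+<\infty$. Moreover $1/p(x)=1/q(x)+1/p_m(x)$. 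Writing $\prod_{i=1}^m f_i=\big(\prod_{i=1}^{m-1}f_i\big)f_m$, the bilinear estimate yields $\big\|\prod_{i=1}^m f_i\big\|_{p(\cdot)}\le C\big\|\prod_{i=1}^{m-1}f_i\big\|_{q(\cdot)}\|f_m\|_{p_m(\cdot)}$, and the inductive hypothesis applied to the first factor closes the induction, with a constant depending only on $m$ and $\min_i p_i^-$.

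\emph{The bilinear estimate.} We may assume $\|f_1\|_{p_1(\cdot)}$ and $\|f_2\|_{p_2(\cdot)}$ are positive and finite (otherwise the inequality is trivial), and by homogeneity that both equal $1$, i.e. $\int_{\mathbb R^n}|f_i(x)|^{p_i(x)}\,dx\le1$ for $i=1,2$. For a.e.\ $x$ the numbers $p(x)/p_1(x)$ and $p(x)/p_2(x)$ lie in $(0,1)$ and sum to $1$, so $p_1(x)/p(x)$ and $p_2(x)/p(x)$ form a conjugate pair, and Young's inequality gives the pointwise bound
\[
|f_1(x)f_2(x)|^{p(x)}\le\frac{p(x)}{p_1(x)}|f_1(x)|^{p_1(x)}+\frac{p(x)}{p_2(x)}|f_2(x)|^{p_2(x)}\le|f_1(x)|^{p_1(x)}+|f_2(x)|^{p_2(x)}.
\]
Integrating yields $\int_{\mathbb R^n}|f_1f_2|^{p(x)}\,dx\le2$. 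Finally, for $\lambda\ge1$ one has $(|f_1(x)f_2(x)|/\lambda)^{p(x)}\le\lambda^{-p^-}|f_1(x)f_2(x)|^{p(x)}$, so the modular of $f_1f_2/\lambda$ is $\le1$ once $\lambda\ge2^{1/p^-}$; by Definition \ref{s1de1} this gives $\|f_1f_2\|_{p(\cdot)}\le2^{1/p^-}$, which is the asserted inequality.

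\emph{Main obstacle.} The only delicate point is that exponents below $1$ are permitted, so that the modular and the norm are not comparable in the naive $L^p$ way; this is precisely where the rescaling step above and the standing hypothesis $p^->0$ built into $\mathcal P^0$ are used. Everything else is formal: Young's inequality requires only that the local conjugate exponents exceed $1$, which is automatic here, and $p_i^+<\infty$ keeps the argument away from the value $+\infty$.
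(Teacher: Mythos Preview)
Your argument is correct and is the standard proof of the generalized H\"older inequality in variable Lebesgue spaces. The paper itself does not prove this lemma at all: it is stated with a citation to \cite{CF,TLZ} and used as a black box, so there is no ``paper's own proof'' to compare against. What you have written is essentially the argument one finds in those references, and it goes through without issue under the hypotheses $p_i(\cdot)\in\mathcal P^0$ (so $0<p_i^-\le p_i^+<\infty$): the induction reduces to $m=2$, the pointwise Young inequality is legitimate since $p_1(x)/p(x),\,p_2(x)/p(x)>1$ are conjugate, and the passage from the modular bound $\int|f_1f_2|^{p(x)}\,dx\le2$ to $\|f_1f_2\|_{p(\cdot)}\le2^{1/p^-}$ via the rescaling $\lambda^{-p^-}$ is exactly the right way to handle the sub-Banach case $p^-<1$.
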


Now we recall the following boundedness of the vector-valued maximal operator $M$.
\begin{lemma}\cite{CFMP}\label{s2p2}\quad
Let $p(\cdot)\in LH\cap\mathcal P^0$.
Then
for any $q>1$, $f=\{f_i\}_{i\in \mathbb{Z}}$, $f_i\in L_{loc}$, $i\in \mathbb{Z}$
\begin{equation*}
  \|\|\mathbb{M}(f)\|_{l^q}\|_{L^{p(\cdot)}}\leq C\|\|f||_{l^q}\|_{L^{p(\cdot)}},
\end{equation*}
where $\mathbb{M}(f)=\{M(f_i)\}_{i\in\mathbb{Z}}$.
\end{lemma}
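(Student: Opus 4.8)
The plan is to derive this vector-valued bound by Rubio de Francia extrapolation from the classical weighted Fefferman--Stein inequality. In the range where the statement is meaningful, namely $p^->1$ so that $M$ is bounded on $L^{p(\cdot)}$, the hypothesis $p(\cdot)\in LH\cap\mathcal P^0$ gives more: since $LH$ is stable under division by a constant and under passage to the conjugate exponent (as long as the exponent stays bounded inside $(1,\infty)$), for every $p_0$ with $1<p_0<p^-$ the scaled conjugate exponent $(p(\cdot)/p_0)'$ again lies in $LH$ and is bounded inside $(1,\infty)$, hence in $\mathcal B$; i.e. $M$ is bounded on $L^{(p(\cdot)/p_0)'}$. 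These two maximal-function bounds are exactly the input the extrapolation scheme requires. (If one wishes to allow $p^-\le1$ the estimate should be read with $M$ replaced by the fractional maximal operator $M_r g=(M(|g|^r))^{1/r}$ for some $0<r<p^-$, and the argument below is run with $p(\cdot)/r$ in place of $p(\cdot)$; this is the form in which \cite{CFMP} state and use it.)

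First I would fix $p_0\in(1,p^-)$ and abbreviate $F=\|\mathbb M(f)\|_{\ell^q}$ and $G=\|f\|_{\ell^q}$. The weighted input is the Andersen--John vector-valued extension of the weighted Fefferman--Stein inequality: for every $w\in A_{p_0}$,
\begin{align*}
\int_{\mathbb R^n}F(x)^{p_0}w(x)\,dx\le C\big([w]_{A_{p_0}},n,p_0,q\big)\int_{\mathbb R^n}G(x)^{p_0}w(x)\,dx.
\end{align*}
Next, by the norm conjugate formula for $L^{p(\cdot)/p_0}$, one can find a nonnegative $h$ with $\|h\|_{L^{(p(\cdot)/p_0)'}}\le1$ and $\|F\|_{L^{p(\cdot)}}^{p_0}=\|F^{p_0}\|_{L^{p(\cdot)/p_0}}\le C\int F^{p_0}h$, and then form the Rubio de Francia iterate
\begin{align*}
Rh=\sum_{k=0}^\infty\frac{M^kh}{2^k\,\|M\|^{\,k}_{L^{(p(\cdot)/p_0)'}}}.
\end{align*}
Then $h\le Rh$, $\|Rh\|_{L^{(p(\cdot)/p_0)'}}\le C$, and $Rh\in A_1\subset A_{p_0}$ with $A_{p_0}$-characteristic bounded by a constant depending only on $\|M\|_{L^{(p(\cdot)/p_0)'}}$. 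Using the weighted inequality with $w=Rh$ and then the generalized H\"older inequality (Lemma \ref{s2le5}), one gets
\begin{align*}
\|F\|_{L^{p(\cdot)}}^{p_0}\le C\int F^{p_0}h\le C\int F^{p_0}(Rh)\le C\int G^{p_0}(Rh)\le C\|G^{p_0}\|_{L^{p(\cdot)/p_0}}\|Rh\|_{L^{(p(\cdot)/p_0)'}}\le C\|G\|_{L^{p(\cdot)}}^{p_0},
\end{align*}
and taking $p_0$-th roots yields the claim.

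The only delicate point is the choice of $p_0$: it must be taken strictly below $p^-$ so that $p(\cdot)/p_0$ remains bounded inside $(1,\infty)$, which is what guarantees that $M$ is bounded on the associate space $L^{(p(\cdot)/p_0)'}$ — equivalently $(p(\cdot)/p_0)'\in\mathcal B$ — and it is precisely here that the log-H\"older regularity of $p(\cdot)$, inherited by the scaled conjugate exponent, enters. Everything else is standard bookkeeping in the variable-exponent extrapolation machinery, and the full details may be found in \cite{CFMP}.
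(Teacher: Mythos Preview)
The paper does not give its own proof of this lemma: it is simply quoted from \cite{CFMP}. Your sketch via Rubio de Francia extrapolation from the weighted Fefferman--Stein inequality is precisely the argument used in \cite{CFMP}, so there is nothing to compare; your proposal is correct and coincides with the cited source (including your parenthetical remark on handling $p^-\le1$ by first rescaling the exponent).
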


We now introduce a new space $CMO^{p(\cdot)}$ as follows.

\begin{definition}
Let $\psi\in \mathcal S$
satisfy (\ref{s2d1}), and $0<p^-\le p^+<\infty$. The Carleson measure space
$CMO^{p(\cdot)}$ is the collection of all $f\in \mathcal S'_\infty$ fulfilling
$$
\|f\|_{CMO^{p(\cdot)}}:=\sup_{P}\left\{\frac{|P|}{\|\chi_P\|^2_{p(\cdot)}}
\int_{\mathbb R^n}\sum_{Q\subset
P}|Q|^{-1}|\left<f, \psi_Q\right>|^2\chi_{Q}(x)dx\right\}^{1/2}<\infty.
$$
\end{definition}

The definition of $CMO^{p(\cdot)}$ is independent of the choice of $\{\psi_j\}_{j\in\mathbb Z}$
due to the following Plancherel-P\^{o}lya inequality for $CMO^{p(\cdot)}$.

\begin{theorem}\label{s2t1}
Let $\{\phi_j\}_j$ and $\{\varphi_k\}_k$ be any kernel functions satisfying
(\ref{s2d1}), and $p(\cdot)\in LH$, $0<p^-\le p^+<\infty$. Then for all $f\in\mathcal S'_\infty$,
\begin{align*}
\sup_{P}&\left\{\frac{|P|}{\|\chi_P\|^2_{p(\cdot)}}
\sum_{j=-\log_2\ell(P)}^\infty\sum_{Q\subset P}\left(\sup_{z\in Q}
|\tilde\phi_j\ast f(z)|\right)^2|Q|\right\}^{1/2}\\
\sim&
\sup_{P}\left\{\frac{|P|}{\|\chi_P\|^2_{p(\cdot)}}
\sum_{j=-\log_2\ell(P)}^\infty\sum_{Q\subset P}\left(\inf_{z\in Q}
|\tilde\varphi_j\ast f(z)|\right)^2|Q|\right\}^{1/2}.
\end{align*}
\end{theorem}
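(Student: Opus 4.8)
The plan is to reduce the equivalence to one inequality and then run the discrete Calder\'{o}n reproducing formula together with an almost-orthogonality estimate, letting the variable exponent enter only through norms of characteristic functions. In detail: since the asserted equivalence is symmetric in the two kernel systems — once the inequality ``$\lesssim$'' is established for an arbitrary admissible pair, interchanging $\phi$ and $\varphi$ (and the sup with the inf, using $\inf\le\sup$) yields ``$\gtrsim$'' — it suffices to bound the first supremum by a constant times the second. So fix a dyadic cube $P$, a dyadic cube $Q\subset P$ with $\ell(Q)=2^{-j}$, $j\ge-\log_2\ell(P)$, and a point $z\in Q$. Choosing a companion kernel $\psi$ of $\varphi$ so that $(\varphi,\psi)$ satisfies (\ref{s2d1}) and applying Lemma \ref{s2l1} to this pair, one has, for every $f\in\mathcal S'_\infty$,
\begin{align*}
\tilde\phi_j\ast f(z)=\sum_{k\in\mathbb Z}\ \sum_{Q'}|Q'|\,(\tilde\varphi_k\ast f)(z_{Q'})\,(\tilde\phi_j\ast\psi_k)(z-z_{Q'}),
\end{align*}
the inner sum running over the dyadic cubes $Q'$ of side $2^{-k}$. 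Since $\widehat\phi$ and $\widehat\psi$ are supported in $\{1/2\le|\xi|\le2\}$, both $\tilde\phi_j$ and $\psi_k$ have vanishing moments of every order, so integration by parts yields, for each $L>0$, the almost-orthogonality estimate
\begin{align*}
|(\tilde\phi_j\ast\psi_k)(x)|\le C_L\,2^{-|j-k|L}\,\frac{2^{(j\wedge k)n}}{(1+2^{(j\wedge k)}|x|)^{L}},\qquad x\in\mathbb R^n.
\end{align*}

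The first real step is to replace the sampled values $(\tilde\varphi_k\ast f)(z_{Q'})$ by infimum-type quantities. Because $\tilde\varphi_k\ast f$ has Fourier transform supported in $\{2^{k-1}\le|\xi|\le2^{k+1}\}$, a classical Plancherel--P\^{o}lya inequality for band-limited functions gives, for a fixed $0<a\le1$ and all $w$ in the cube of side $2^{-k}$ containing $z_{Q'}$,
\begin{align*}
|(\tilde\varphi_k\ast f)(z_{Q'})|\le C\big[M(|\tilde\varphi_k\ast f|^{a})(w)\big]^{1/a};
\end{align*}
and since the cubes $Q''$ occurring on the right-hand side of the theorem have side $2^{-k-N}$ with $N$ a large fixed integer, the same argument even dominates $(\tilde\varphi_k\ast f)(z_{Q'})$ by $C\inf_{w\in Q''}[M(|\tilde\varphi_k\ast f|^{a})(w)]^{1/a}$ for a suitable such $Q''$. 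Inserting this and the kernel bound into the Calder\'{o}n expansion and summing the geometric series in the spatial variable (the kernel has scale $2^{-(j\wedge k)}\ge2^{-k}$, hence is essentially constant on each $Q'$, so the sum over $Q'$ is controlled by a Hardy--Littlewood maximal function of the discrete data), one arrives at a pointwise bound of the schematic form
\begin{align*}
\sup_{z\in Q}|\tilde\phi_j\ast f(z)|\le C\sum_{k\in\mathbb Z}2^{-|j-k|L'}\,M(\lambda_k)(z_Q),\qquad \lambda_k:=\sum_{Q''}\Big(\inf_{w\in Q''}\big[M(|\tilde\varphi_k\ast f|^{a})(w)\big]^{1/a}\Big)\chi_{Q''},
\end{align*}
the sum over the cubes $Q''$ of side $2^{-k-N}$ entering the statement, where $L'=L'(L,n)$ can be made as large as needed by taking $L$ large.

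There remains the Carleson summation. After squaring, applying Cauchy--Schwarz in $k$ against the summable weight $2^{-|j-k|L'}$, multiplying by $|Q|$ and summing over all dyadic $Q\subset P$ of scale $\le\ell(P)$ (so that at each generation these cubes tile $P$), the task reduces to bounding, for each $P$,
\begin{align*}
\frac{|P|}{\|\chi_P\|_{p(\cdot)}^{2}}\sum_{j\ge-\log_2\ell(P)}\ \sum_{k\in\mathbb Z}2^{-|j-k|L''}\int_{P}\big[M(\lambda_k)(x)\big]^{2}\,dx
\end{align*}
by a constant times $\big(\sup_{P'}R(P')\big)^{2}$, where $R(P')$ denotes the right-hand side of the theorem for $P'$. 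Here one splits each $\lambda_k$ into a ``local'' part, carried by the $Q''$ that meet a fixed dilate $CP$ and have scale $\le\ell(P)$, and a complementary ``global'' part. For the local part one removes the outer maximal function by $M$ on $L^{2}$, then uses $M$ on $L^{2/a}$ (legitimate since $2/a\ge2>1$) to dominate the result by $\int_{CP}|\tilde\varphi_k\ast f|^{2}$ up to rapidly decaying tails, covers $CP$ by boundedly many dyadic cubes $P'$ with $\ell(P')\sim\ell(P)$, and applies the discrete Plancherel--P\^{o}lya inequality once more (valid precisely because the $Q''$ have side $2^{-k-N}$ with $N$ large) to estimate $\int_{P'}|\tilde\varphi_k\ast f|^{2}$ by $\sum_{Q''\subset P'}(\inf_{Q''}|\tilde\varphi_k\ast f|)^{2}|Q''|$; Lemma \ref{s2l2} gives $\|\chi_{P'}\|_{p(\cdot)}\sim\|\chi_P\|_{p(\cdot)}$, so the local contribution is $\lesssim(\sup_{P'}R(P'))^{2}$, the generalized H\"{o}lder inequality (Lemma \ref{s2le5}), or the vector-valued maximal bound (Lemma \ref{s2p2}), being used to handle the sum in $k$. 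For the global part, the spatial factor $(1+2^{(j\wedge k)}|z_Q-z_{Q''}|)^{-L''}$ and the scale-separation factor $2^{-|j-k|L''}$ furnish decay, while Lemma \ref{s2l2}, applied as $\|\chi_P\|_{p(\cdot)}\,\|\chi_{Q''}\|_{p(\cdot)}^{-1}\le C|P|\,|Q''|^{-1}$ (with the evident counterpart when $|Q''|\ge|P|$), renders the remaining double sum absolutely convergent.

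The step I expect to be the main obstacle is precisely this last one — the variable-exponent bookkeeping of the global terms. In the constant-exponent case $\|\chi_P\|_{p}^{2}=|P|^{2/p}$, and the Carleson summation collapses to summing fixed powers of $|P|$ and $|Q''|$ against the almost-orthogonality decay; for a genuinely variable $p(\cdot)$ the quantity $\|\chi_P\|_{p(\cdot)}$ is not comparable to any power of $|P|$, so the only quantitative handle is the two-sided estimate of Lemma \ref{s2l2}, and one must verify that the polynomial decay coming from the (arbitrarily many) vanishing moments of $\phi$ and $\psi$ dominates the worst-case growth of $|P|\,\|\chi_P\|_{p(\cdot)}^{-2}$ relative to $|P'|\,\|\chi_{P'}\|_{p(\cdot)}^{-2}$ over all contributing dyadic cubes $P'$. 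This is where one exploits the freedom to take $L$ — equivalently, the order of vanishing moments in the almost-orthogonality estimate — large in terms of $n$, $p^{-}$, $p^{+}$ and $a$; with $L$ so fixed, the local and global estimates combine to give the asserted inequality, and with it the theorem.
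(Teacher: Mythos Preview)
Your proposal is correct and shares the paper's overall architecture --- Calder\'on reproducing formula, almost-orthogonality, local/far splitting, Lemma~\ref{s2l2} for the variable-exponent ratios --- but differs in two ways worth noting. First, to pass from sampled values $(\tilde\varphi_k\ast f)(z_{Q'})$ to infima you route through the maximal function $[M(|\tilde\varphi_k\ast f|^a)]^{1/a}$ and its $L^{2/a}$-boundedness; the paper instead uses the Deng--Han wavelet Calder\'on identity, in which the sample point $x_{Q'}$ may be any point of $Q'$, so one simply picks the infimizer and no maximal function ever appears. This keeps the whole argument at the level of explicit discrete sums and avoids the ``up to rapidly decaying tails'' bookkeeping your route requires. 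Second, the paper organizes the decomposition by scale first (splitting $j'\ge k_0$ versus $j'<k_0$, the latter being the cubes $Q'$ strictly larger than $P$) and only afterward by distance into dyadic annuli, whereas your split is primarily spatial; both work, but the large-scale case --- what you call the ``evident counterpart when $|Q''|\ge|P|$'' --- is indeed the crux, handled in the paper by passing to a large cube $P'\supset P\cup Q'$, invoking Lemma~\ref{s2l2} in the form $\|\chi_{P'}\|_{p(\cdot)}^{2}\|\chi_P\|_{p(\cdot)}^{-2}\cdot|P|/|P'|\le C(|P'|/|P|)^{2/p^{-}-1}$, and then choosing the almost-orthogonality exponents $L>\max\{1,n(2/p^{-}-2)\}$, $M>n(2/p^{-}-1)$ to absorb this growth. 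Your identification of the main obstacle and its resolution is thus exactly right; the paper's version is simply more streamlined because the flexible-sample-point identity makes the maximal-function detour unnecessary.
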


\begin{proof}
For any $f\in \mathcal {S'_\infty}$, we recall a
wavelet Calder\'on reproducing formula developed by Deng and Han \cite{DH},
\begin{align*}
f(x)=\sum_{j\in \mathbb Z}2^{-jn}\sum_{Q}\tilde\varphi_j\ast f(x_Q)
\psi_j(x,x_Q)
\end{align*}
where the series converges in $L^2$, $\mathcal S_\infty$ and $\mathcal S'_\infty$.
Then we rewrite $\tilde\phi_j\ast f(z)$ as
\begin{align*}
&\tilde\phi_j\ast f(z)\\
&=\sum_{Q'}\left<f,\varphi_{Q'}\right>\tilde\phi_j\ast\psi_{Q'}(z)\\
&=\sum_{j'\in\mathbb Z}\sum_{Q'}|Q'|^{1/2}\left<f,|Q'|^{1/2}\varphi_{j'}{(x-x_{Q'})}\right>
\int_{\mathbb R^n}\tilde\phi_j(z-x)\psi_{j'}(x,x_{Q'})dx.
\end{align*}

Here and below, we will apply the
almost orthogonal estimate which can be found in many monographs. For example, please see
\cite{g} for more details. To be more precise,
for any given positive integers $L$
and $\psi,\varphi\in\mathcal S$ satisfying cancellation conditions,
then
\begin{equation}\label{s2i2}
|\psi_j\ast \varphi_{j'}(x)|\le C\frac{2^{-|j-j'|L}2^{(j\wedge j')n}}
{(1+2^{(j\wedge j')}|x|)^{(n+M)}}.
\end{equation}

Using the inequality $(\ref{s2i2})$,
for any given positive integers $L,\;M$, we obtain
\begin{equation}\label{s2i3}
\int_{\mathbb R^n}\tilde\phi_j(z-x)\psi_{j'}(x,x_{Q'})dx
\le C\frac{2^{-|j-j'|L}2^{-(j\wedge j')M}}
{(2^{-(j\wedge j')}+|z-x_{Q'}|)^{(n+M)}}.
\end{equation}

Therefore,
\begin{align*}
|\tilde\phi_j\ast f(z)|
&\le C\sum_{j'\in\mathbb Z}
\sum_{Q'}
|Q'|\left<f,\varphi_{j'}{(x-x_{Q'})}\right>
\frac{2^{-|j-j'|L}2^{-(j\wedge j')M}}
{(2^{-(j\wedge j')}+|z-x_{Q'}|)^{(n+M)}}.
\end{align*}

Through the proof, $Q$ and $Q'$ always denote the dyadic cubes
with side length $2^{-j}$ and $2^{-j'}$, respectively. Hence,
for $x\in Q$,
\begin{align*}
|\tilde\phi_j\ast f(z)|
&\le C\sum_{j'\in\mathbb Z}
\sum_{Q'}
|Q'||\tilde\varphi_{j'}\ast f{(x_{Q'})}|
\frac{2^{-|j-j'|L}2^{-(j\wedge j')M}}
{(2^{-(j\wedge j')}+|x_Q-x_{Q'}|)^{(n+M)}}.
\end{align*}

Thus, applying H\"older's inequality yields
\begin{align*}
&(\sup_{z\in Q}|\tilde\phi_j\ast f(z)|)^2\\
\le& C\Bigg(\sum_{j'\in\mathbb Z}2^{-|j-j'|L}
\Bigg\{\sum_{Q'}
|Q'|\frac{2^{-(j\wedge j')}}
{(2^{-(j\wedge j')}+|x_Q-x_{Q'}|)^{(n+1)}}\Bigg\}^{1/2}\\
&\times
\Bigg\{\sum_{Q'}
|Q'||\tilde\varphi_{j'}\ast f{(x_{Q'})}|^2
\frac{2^{-(j\wedge j')M}}
{(2^{-(j\wedge j')}+|x_Q-x_{Q'}|)^{(n+M)}}\Bigg\}^{1/2}\Bigg)^2.
\end{align*}

Observe that
\begin{align}\label{s2i4}
\begin{split}
&\sum_{Q'}
|Q'|\frac{2^{-(j\wedge j')}}
{(2^{-(j\wedge j')}+|x_Q-x_{Q'}|)^{(n+1)}}\\
&\leq C\sum_{Q'}
\int_{\mathbb R^n}\frac{2^{-(j\wedge j')}}
{(2^{-(j\wedge j')}+|x_Q-y|)^{(n+1)}}
\chi_{Q'}(y)dy\\
&\leq C\int_{\mathbb R^n}\frac{2^{-(j\wedge j')}}
{(2^{-(j\wedge j')}+|x_Q-y|)^{(n+1)}}dy\le C.
\end{split}
\end{align}

Since $x_{Q'}$ can be replaced by any point in $Q$ in the
discrete Calder\'on identity,
by H\"older's inequality we get that
\begin{align*}
&(\sup_{z\in Q}|\tilde\phi_j\ast f(z)|)^2\\
\le& C\sum_{j'\in\mathbb Z}
\sum_{Q'}
2^{-|j-j'|L}
|Q'|\frac{2^{-(j\wedge j')M}}
{(2^{-(j\wedge j')}+|x_Q-x_{Q'}|)^{(n+M)}}
(\inf_{z\in Q'}|\tilde\varphi_{j'}\ast f{(z)}|)^2.
\end{align*}

Given a dyadic cube $P$ with $\ell(P)=2^{-k_0-N}$, we obtain
\begin{align*}
&\frac{|P|}{\|\chi_P\|^2_{p(\cdot)}}
\sum_{j=-\log_2\ell(P)}^\infty\sum_{\substack{Q\subset P\\
\ell(Q)=2^{-j-N}}}\left(\sup_{z\in Q}
|\tilde\phi_j\ast f(z)|\right)^2|Q|\\
\le& C\frac{|P|}{\|\chi_P\|^2_{p(\cdot)}}\sum_{j=k_0}^\infty
\sum_{\substack{Q\subset P\\
\ell(Q)=2^{-j-N}}}\sum_{j'=k_0}^\infty
\sum_{\substack{Q'\\\ell{(Q')}=2^{-j'-N}}}
2^{-|j-j'|L}
|Q'|\\
&\times\frac{2^{-(j\wedge j')M}}
{(2^{-(j\wedge j')}+|x_Q-x_{Q'}|)^{(n+M)}}
(\inf_{z\in Q'}|\tilde\varphi_{j'}\ast f{(z)}|)^2|Q|\\
&+C\frac{|P|}{\|\chi_P\|^2_{p(\cdot)}}\sum_{j=k_0}^\infty
\sum_{\substack{Q\subset P\\
\ell(Q)=2^{-j-N}}}\sum_{j'=-\infty}^{k_0-1}
\sum_{\substack{Q'\\\ell{(Q')}=2^{-j'-N}}}
2^{-|j-j'|L}
|Q'|\\
&\times\frac{2^{-(j\wedge j')M}}
{(2^{-(j\wedge j')}+|x_Q-x_{Q'}|)^{(n+M)}}
(\inf_{z\in Q'}|\tilde\varphi_{j'}\ast f{(z)}|)^2|Q|\\
=:&I+II.
\end{align*}

To prove the desired result, it suffices to set $M=1$ in the term $I$.
Furthermore, $I$ can be decomposed as
\begin{align*}
I=&C\frac{|P|}{\|\chi_P\|^2_{p(\cdot)}}\sum_{j=k_0}^\infty
\sum_{\substack{Q\subset P\\
\ell(Q)=2^{-j-N}}}\sum_{j'=k_0}^\infty
\sum_{\substack{Q'\subset3P\\\ell{(Q')}=2^{-j'-N}}}
2^{-|j-j'|L}|Q'|\\
&\times\frac{2^{-(j\wedge j')}}
{(2^{-(j\wedge j')}+|x_Q-x_{Q'}|)^{(n+1)}}
(\inf_{z\in Q'}|\tilde\varphi_{j'}\ast f{(z)}|)^2|Q|\\
&+C\frac{|P|}{\|\chi_P\|^2_{p(\cdot)}}\sum_{j=k_0}^\infty
\sum_{\substack{Q\subset P\\
\ell(Q)=2^{-j-N}}}\sum_{j'=k_0}^{\infty}
\sum_{\substack{Q'\cap 3P\neq\varnothing\\\ell{(Q')}=2^{-j'-N}}}
2^{-|j-j'|L}|Q'|\\
&\times\frac{2^{-(j\wedge j')}}
{(2^{-(j\wedge j')}+|x_Q-x_{Q'}|)^{(n+1)}}
(\inf_{z\in Q'}|\tilde\varphi_{j'}\ast f{(z)}|)^2|Q|\\
&=:I_1+I_2.
\end{align*}

Observe that
\begin{align*}
\sum_{\substack{Q'\subset 3P\\
\ell(Q')=2^{-j-N}}}
(\inf_{z\in Q'}|\tilde\varphi_{j'}\ast f{(z)}|)^2|Q'|
=&\sum_{\substack{Q'\subset 3P\\
\ell(Q')\le\ell{(P)}}}
(\inf_{z\in Q'}|\tilde\varphi_{j'}\ast f{(z)}|)^2|Q'|\\
\leq&C\sup_{\substack{P'\subset 3P\\
\ell(P')=\ell{(P)}}}\sum_{\substack {Q'\subset P'\\
\ell(Q')\le\ell{(P')}}}
(\inf_{z\in Q'}|\tilde\varphi_{j'}\ast f{(z)}|)^2|Q'|\\
\end{align*}

Thus,
\begin{align*}
I_1\le&C\frac{|P|}{\|\chi_P\|^2_{p(\cdot)}}\sum_{j=k_0}^\infty
\sum_{\substack{Q\subset P\\
\ell(Q)=2^{-j-N}}}\sum_{j'=k_0}^\infty
\sup_{\substack{P'\subset 3P\\
\ell(P')=\ell{(P)}}}\sum_{\substack {Q'\subset P'\\
\ell(Q')\le\ell{(P')}}}
2^{-|j-j'|L}|Q'|\\
&\times\frac{2^{-(j\wedge j')}}
{(2^{-(j\wedge j')}+|x_Q-x_{Q'}|)^{(n+1)}}
(\inf_{z\in Q'}|\tilde\varphi_{j'}\ast f{(z)}|)^2|Q|\\
\le&C\frac{|P|}{\|\chi_P\|^2_{p(\cdot)}}\sum_{j=k_0}^\infty
\sum_{j'=k_0}^\infty
\sup_{\substack{P'\subset 3P\\
\ell(P')=\ell{(P)}}}\sum_{\substack {Q'\subset P'\\
\ell(Q')\le\ell{(P')}}}
2^{-|j-j'|L}|Q'|
(\inf_{z\in Q'}|\tilde\varphi_{j'}\ast f{(z)}|)^2\\
\le&C\sup_{P'}
\frac{|P'|}{\|\chi_P'\|^2_{p(\cdot)}}
\sum_{j'=-\log_2\ell{(P')}}^\infty
\sum_{\substack {Q'\subset P'\\
\ell(Q')=2^{-j'-N}}}|Q'|
(\inf_{z\in Q'}|\tilde\varphi_{j'}\ast f{(z)}|)^2
\end{align*}
where the first inequality follows from
the estimate $(\ref{s2i4})$.

Next we decompose the set of dyadic cubes
$\{R:R\cap3P=\varnothing,\ell(R)=\ell(P)\}$
into $\{B_i\}_{i\in\mathbb N}$. Namely, for each $i\in\mathbb N$,
$$
B_i:=\{P':P'\cap3P=\varnothing,\ell(P)=\ell(P'),
2^{i-k_0-N}\le|y_{P'}-y_P|\le2^{i-k_0-N+1}\},
$$
where $y_{P'}$ and $y_P$ denote the center of $P'$
and $P$, respectively.
Then, we obtain
\begin{align*}
I_2=&C\frac{|P|}{\|\chi_P\|^2_{p(\cdot)}}\sum_{j=k_0}^\infty
\sum_{\substack{Q\subset P\\
\ell(Q)=2^{-j-N}}}\sum_{j'=k_0}^{\infty}
\sum_{i=1}^\infty\sum_{P'\in B_i}
\sum_{\substack{Q'\subset P'\\\ell{(Q')}=2^{-j'-N}}}
2^{-|j-j'|L}|Q'|\\
&\times\frac{2^{-(j\wedge j')}}
{(2^{-(j\wedge j')}+|x_Q-x_{Q'}|)^{(n+1)}}
(\inf_{z\in Q'}|\tilde\varphi_{j'}\ast f{(z)}|)^2|Q|\\
\leq&C\sum_{i=1}^\infty\sum_{P'\in B_i}\frac{|P'|}{\|\chi_P'\|^2_{p(\cdot)}}\sum_{j=k_0}^\infty
\sum_{\substack{Q\subset P\\
\ell(Q)=2^{-j-N}}}\sum_{j'=k_0}^{\infty}
\sum_{\substack{Q'\subset P'\\\ell{(Q')}=2^{-j'-N}}}
2^{-|j-j'|L}|Q'|\\
&\times\frac{2^{-(j\wedge j')}}
{(2^{-(j\wedge j')}+|x_P-x_{P'}|)^{(n+1)}}
(\inf_{z\in Q'}|\tilde\varphi_{j'}\ast f{(z)}|)^2|Q|\\
\end{align*}

Observe that $\sum_{\substack{Q\subset P\\\ell{(Q)}=2^{-j}}}
|Q|=|P|$ for each $j\ge k_0$
and there are at most $2^{(i+1)n}$ cubes in $B_i$. Hence,
\begin{align*}
I_2
\leq&C\sum_{i=1}^\infty\sum_{P'\in B_i}|P|
\frac{2^{-k_0}}{2^{(i-k_0)(n+1)}}
\sum_{j'=k_0}^\infty
\bigg(\sum_{j=k_0}^\infty 2^{k_0-(j\wedge j')-L|j-j'|}\bigg)\\
&\times\bigg(\frac{|P'|}{\|\chi_P'\|^2_{p(\cdot)}}
\sum_{\substack{Q'\subset P'\\\ell{(Q')}=2^{-j'-N}}}
(\inf_{z\in Q'}|\tilde\varphi_{j'}\ast f{(z)}|)^2|Q'|\bigg)\\
\leq&C \sup_{P'}\bigg(\frac{|P'|}{\|\chi_P'\|^2_{p(\cdot)}}
\sum_{j'=k_0}^\infty
\sum_{\substack{Q'\subset P'\\\ell{(Q')}=2^{-j'-N}}}
(\inf_{z\in Q'}|\tilde\varphi_{j'}\ast f{(z)}|)^2|Q'|\bigg)\\
&\times\bigg(\sum_{i=1}^\infty|P|2^{in}
\frac{2^{-k_0}}{2^{(i-k_0)(n+1)}}\bigg)\\
\leq&C \sup_{P'}\frac{|P'|}{\|\chi_P'\|^2_{p(\cdot)}}
\sum_{j'=-\log_2\ell(P')}^\infty
\sum_{\substack{Q'\subset P'\\\ell{(Q')}=2^{-j'-N}}}
(\inf_{z\in Q'}|\tilde\varphi_{j'}\ast f{(z)}|)^2|Q'|.
\end{align*}

Now we deal with $II$.
Since for every integer $j'<k_0$, we denote $$j'\equiv k_0-m,
\quad\mbox{for}\quad m\in \mathbb N.$$
Let $$E_{m}^0=\{Q':\ell(Q')=2^{m-k_0-N};\;|y_P-y_{Q'}|\le2^{m-k_0-N-1}\}$$
and $$E_{m}^i=\{Q':\ell(Q')=2^{m-k_0-N};\;2^{i+m-k_0-N-1}<|y_P-y_{Q'}|
\le2^{i+m-k_0-N}\}$$
for $i\in \mathbb N$.
We rewrite

\begin{align*}
II\leq&C\frac{|P|}{\|\chi_P\|^2_{p(\cdot)}}\sum_{j=k_0}^\infty
|P|\sum_{j'=-\infty}^{k_0-1}
\sum_{\substack{Q'\\\ell{(Q')}=2^{-j'-N}}}
2^{-|j-j'|L}
|Q'|\\
&\times\frac{2^{-(j\wedge j')M}}
{(2^{-(j\wedge j')}+|y_P-y_{Q'}|)^{(n+M)}}
(\inf_{z\in Q'}|\tilde\varphi_{j'}\ast f{(z)}|)^2\\
=&C\frac{|P|}{\|\chi_P\|^2_{p(\cdot)}}\sum_{j=k_0}^\infty
2^{-k_0n}\sum_{m=1}^{\infty}\sum_{i=0}^\infty
\sum_{E_{m}^i}
2^{-(j-k_0+m)L}
|Q'|\\
&\times\frac{2^{(m-k_0)M}}
{(2^{m-k_0}+|y_P-y_{Q'}|)^{(n+M)}}
(\inf_{z\in Q'}|\tilde\varphi_{k_0-m}\ast f{(z)}|)^2.
\end{align*}

There are at most $2^{in}+3^n$ dyadic cubes $Q'\in E_{m,j'}^i$
for $i=\mathbb N\cup \{0\}$.
We can choose $P'=10002^{i+m}P$ such that $P'\supseteq Q'$
and $P'\supseteq P$.
Therefore,
\begin{align*}
II\leq&C\frac{|P|}{\|\chi_P\|^2_{p(\cdot)}}\sum_{j=k_0}^\infty
2^{-k_0n}\sum_{m=1}^{\infty}\sum_{i=0}^\infty
\sum_{E_{m}^i}
2^{-(j-k_0+m)L}\\
&\times
\frac{2^{(m-k_0)M}}
{2^{(i+m-k_0)(n+M)}}
(\inf_{z\in Q'}|\tilde\varphi_{k_0-m}\ast f{(z)}|)^2|Q'|\\
\leq&C\frac{|P|}{\|\chi_P\|^2_{p(\cdot)}}\sum_{j=k_0}^\infty
2^{-k_0n}\sum_{m=1}^{\infty}\sum_{i=0}^\infty
(2^{in}+3^n)
2^{-(j-k_0+m)L}\\
&\times
\frac{2^{(m-k_0)M}}{2^{(i+m-k_0)(n+M)}}
\frac{\|\chi_{P'}\|^2_{p(\cdot)}}{|P'|}
\frac{|P'|}{\|\chi_{P'}\|^2_{p(\cdot)}}
(\inf_{z\in Q'}|\tilde\varphi_{k_0-m}\ast f{(z)}|)^2|Q'|\\
\leq&C\sum_{j=k_0}^\infty
2^{-k_0n}\sum_{m=1}^{\infty}\sum_{i=0}^\infty
\frac{\|\chi_{P'}\|^2_{p(\cdot)}}{\|\chi_{P}\|^2_{p(\cdot)}}
\frac{|P|}{|P'|}
(2^{in}+3^n)
2^{-(j-k_0+m)L}\frac{2^{(m-k_0)M}}
{2^{(i+m-k_0)(n+M)}}\\
&\times
\bigg(\sup_{P'}\frac{|P'|}{\|\chi_{P'}\|^2_{p(\cdot)}}
\sum_{j'=-\log_2\ell(P')}^\infty
\sum_{\substack{Q'\subset P'\\\ell{(Q')}=2^{-j'-N}}}
(\inf_{z\in Q'}|\tilde\varphi_{j'}\ast f{(z)}|)^2|Q'|\bigg).
\end{align*}

By Lemma \ref{s2l2}, we have
\begin{align*}
&\frac{\|\chi_{P'}\|^2_{p(\cdot)}}{\|\chi_{P}\|^2_{p(\cdot)}}
\frac{|P|}{|P'|}
=\frac{\|\chi_{P'}\|^{2/{p^-}}_{p(\cdot)/{p^-}}}
{\|\chi_{P}\|^{2/{p^-}}_{p(\cdot)/{p^-}}}
\frac{|P|}{|P'|}\\
\le& \frac{|P'|}{|P|}^{2/{p^-}-1}
=C2^{(2/{p^-}-1)(i+m)n}.
\end{align*}

Set $L>\max\{1,n(2/{p^-}-2)\}$ and $M>n(2/{p^-}-1)$.
Observe that
$$\sum_{i=0}^\infty2^{in(2/{p^-}-1)}(2^{in}+3^n)2^{-i(n+M)}\le C$$
and
$$\sum_{j=k_0}^\infty2^{-jL}\le C2^{-k_0L};
\quad
\sum_{m=1}^{\infty}2^{mn(2/{p^-}-1)}
2^{-mL}{2^{-mn}}\le C.$$
Then we get
\begin{align*}
&\sum_{j=k_0}^\infty
2^{-k_0n}\sum_{m=1}^{\infty}\sum_{i=0}^\infty
\frac{\|\chi_{P'}\|^2_{p(\cdot)}}{\|\chi_{P}\|^2_{p(\cdot)}}
\frac{|P|}{|P'|}
(2^{in}+3^n)
2^{-(j-k_0+m)L}\frac{2^{(m-k_0)M}}
{2^{(i+m-k_0)(n+M)}}\\
&\le\sum_{j=k_0}^\infty
2^{-k_0n}\sum_{m=1}^{\infty}\sum_{i=0}^\infty
2^{(2/{p^-}-1)(i+m)n}
(2^{in}+3^n)
2^{-(j-k_0+m)L}\frac{2^{(m-k_0)M}}
{2^{(i+m-k_0)(n+M)}}\\
&\le
2^{-k_0(n+L)}\sum_{m=1}^{\infty}2^{mn(2/{p^-}-1)}
2^{-(-k_0+m)L}\frac{2^{(m-k_0)M}}
{2^{(m-k_0)(n+M)}}\le C.
\end{align*}

The proof of the Plancherel-P\^{o}lya inequality for $CMO_{p(\cdot)}$ is
complete.
\end{proof}

By Theorem \ref{s2t1}, we immediately obtain the following
discrete version of $CMO^{p(\cdot)}$.

\begin{corollary}\label{s2c1}
Let $\{\varphi_j\}_j$ be any kernel functions satisfying
(\ref{s2d1}), and $p(\cdot)\in LH$, $0<p^-\le p^+<\infty$. Then for all $f\in CMO^{p(\cdot)}$,
\begin{align*}
\|f\|_{CMO^{p(\cdot)}}
\sim&
\sup_{P}\left\{\frac{|P|}{\|\chi_P\|^2_{p(\cdot)}}
\sum_{j=-\log_2\ell(P)}^\infty\sum_{Q\subset P}
|\varphi_j\ast f(x_Q)|^2|Q|\right\}^{1/2},
\end{align*}
where $x_Q$ is any fixed point in $Q$.
\end{corollary}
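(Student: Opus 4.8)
Since the corollary is asserted to follow immediately from Theorem \ref{s2t1}, the plan is to make that deduction explicit in three short steps, the real content being already contained in the Plancherel--P\^{o}lya inequality.

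First I would recast $\|f\|_{CMO^{p(\cdot)}}$ as a discrete square--function expression. From the definitions $\psi_Q(x)=|Q|^{1/2}\psi_j(x-z_Q)$ and $\tilde\psi(x)=\overline{\psi(-x)}$, a direct computation gives $\langle f,\psi_Q\rangle=|Q|^{1/2}\,\tilde\psi_j\ast f(z_Q)$ for every dyadic cube $Q$ at scale $j$ with lower left corner $z_Q$. Since $\int_{\mathbb R^n}\chi_Q=|Q|$, the integral in the definition of the $CMO^{p(\cdot)}$ norm collapses and one obtains, the inner sum for each $j$ running over the dyadic $Q\subset P$ at scale $2^{-j}$,
\[
\|f\|_{CMO^{p(\cdot)}}=\sup_{P}\left\{\frac{|P|}{\|\chi_P\|^2_{p(\cdot)}}\sum_{j=-\log_2\ell(P)}^{\infty}\sum_{Q\subset P}|\tilde\psi_j\ast f(z_Q)|^2\,|Q|\right\}^{1/2}.
\]

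Second, since $z_Q\in Q$, one has $\inf_{z\in Q}|\tilde\psi_j\ast f(z)|\le|\tilde\psi_j\ast f(z_Q)|\le\sup_{z\in Q}|\tilde\psi_j\ast f(z)|$, and the analogous inequalities hold with $\tilde\psi$ replaced by $\tilde\varphi$ and $z_Q$ replaced by the arbitrary fixed point $x_Q\in Q$. Hence both $\|f\|_{CMO^{p(\cdot)}}$ and the quantity on the right-hand side of the corollary lie between the corresponding ``$\inf_{z\in Q}$'' and ``$\sup_{z\in Q}$'' versions of the relevant square--function sums.

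Third I would apply Theorem \ref{s2t1}. Here one only needs to note that (\ref{s2d1}) is preserved under $\varphi\mapsto\tilde\varphi$, since $\widehat{\tilde\varphi}(\xi)=\overline{\widehat\varphi(\xi)}$ leaves the annular support and the lower bounds unchanged and merely conjugates the reproducing identity (whose value $1$ is real), while $\widetilde{\tilde\varphi}=\varphi$. Applying Theorem \ref{s2t1} once to the pair $(\psi,\tilde\varphi)$ and once to the pair $(\tilde\varphi,\psi)$ shows that the ``$\sup$'' and ``$\inf$'' square--function sums built from $\tilde\psi$ are each comparable to those built from $\varphi$. Chaining these equivalences with the two-sided bounds of the second step yields
\[
\|f\|_{CMO^{p(\cdot)}}\sim\sup_{P}\left\{\frac{|P|}{\|\chi_P\|^2_{p(\cdot)}}\sum_{j=-\log_2\ell(P)}^{\infty}\sum_{Q\subset P}|\varphi_j\ast f(x_Q)|^2\,|Q|\right\}^{1/2},
\]
which is the assertion; in particular the right-hand side does not depend on the choice of $x_Q\in Q$.

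I do not expect a genuine obstacle: once Theorem \ref{s2t1} is available the corollary is pure bookkeeping. The only points that require a little care are computing the constant correctly in the identity $\langle f,\psi_Q\rangle=|Q|^{1/2}\tilde\psi_j\ast f(z_Q)$ while keeping the dyadic indexing conventions straight, and checking the stability of (\ref{s2d1}) under the reflection $\varphi\mapsto\tilde\varphi$ so that Theorem \ref{s2t1} may legitimately be invoked with $\tilde\varphi$ in place of $\varphi$.
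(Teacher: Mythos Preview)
Your proposal is correct and is exactly the approach the paper intends: the paper gives no proof at all beyond the sentence ``By Theorem \ref{s2t1}, we immediately obtain the following discrete version of $CMO^{p(\cdot)}$,'' and your three steps simply unpack that remark. The only substantive content is the Plancherel--P\^{o}lya inequality, and the bookkeeping you outline (rewriting $\langle f,\psi_Q\rangle$ as $|Q|^{1/2}\tilde\psi_j\ast f(z_Q)$, sandwiching between $\inf$ and $\sup$, and noting that condition (\ref{s2d1}) survives the reflection $\varphi\mapsto\tilde\varphi$) is precisely what is needed to make the deduction honest.
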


\section{Duality of $H^{p(\cdot)}$ and $CMO^{p(\cdot)}$}

Define a linear map $S_\varphi$ by
$$
S_\varphi (f)=\{\left<f,\varphi_Q\right>\}_Q,
$$
and another linear map $T_{\psi}$ by
$$
T_\psi(\{s_Q\}_Q)=\sum_{Q}s_Q\psi_Q.
$$
For $g\in CMO^{p(\cdot)}$, define a linear functional
$L_g$ by
$$
L_g(f)=\left<S_\psi(g),S_\varphi(f)\right>=\sum_{Q}
\left<g,\psi_Q\right>\left<f,\varphi_Q\right>
$$
for $f\in\mathcal S_\infty$.

We now state the following main result in this section.
\begin{theorem}\label{s3t1}
Suppose that $p(\cdot)\in LH$, $0<p^-\le p^+\leq1$. The dual of $H^{p(\cdot)}$
is $CMO^{p(\cdot)}$ in the following sense.\\
\noindent (1) For $g\in CMO^{p(\cdot)}$, the linear functional $L_g$,
defined initially on $\mathcal S_\infty$, extends to a continuous linear functional
on $H^{p(\cdot)}$ with $\|L_g\|\le C\|g\|_{CMO^{p(\cdot)}}$.

\noindent (2) Conversely, every continuous linear functional $L$ on $H^{p(\cdot)}$
satisfies $L=L_g$ for some $g\in CMO^{p(\cdot)}$ with $\|g\|_{CMO^{p(\cdot)}}
\le C\|L\|$.
\end{theorem}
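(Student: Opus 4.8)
The plan is to prove the duality via the sequence-space machinery, reducing everything to the pairing between the sequence spaces $s^{p(\cdot)}$ (the sequences arising from $H^{p(\cdot)}$) and $c^{p(\cdot)}$ (those arising from $CMO^{p(\cdot)}$) introduced in Section 3. The two maps $S_\varphi$ and $T_\psi$ satisfy $T_\psi \circ S_\varphi = \mathrm{Id}$ on the relevant spaces by the discrete Calder\'on identity of Lemma \ref{s2l1} (and its Hardy-space refinement Lemma \ref{s2le2}); and by the Plancherel--P\^olya inequality (Theorem \ref{s2t1}, Corollary \ref{s2c1}) together with the analogous lifting theorem for $H^{p(\cdot)}$, $S_\varphi$ maps $H^{p(\cdot)}$ boundedly into $s^{p(\cdot)}$ and $CMO^{p(\cdot)}$ boundedly into $c^{p(\cdot)}$, while $T_\psi$ maps back boundedly. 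Granting these, part (1) and part (2) both follow once one establishes the sequence-space duality $\left(s^{p(\cdot)}\right)^* = c^{p(\cdot)}$, i.e. that the pairing $\langle\{s_Q\},\{t_Q\}\rangle=\sum_Q s_Q \overline{t_Q}$ identifies $c^{p(\cdot)}$ with the dual of $s^{p(\cdot)}$.

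For part (1): given $g\in CMO^{p(\cdot)}$, for $f\in\mathcal S_\infty$ write $f=\sum_Q\langle f,\varphi_Q\rangle\psi_Q$ by Lemma \ref{s2l1}, so that $L_g(f)=\sum_Q\langle g,\psi_Q\rangle\langle f,\varphi_Q\rangle = \langle S_\psi(g),S_\varphi(f)\rangle$. By the $s^{p(\cdot)}$--$c^{p(\cdot)}$ Cauchy--Schwarz/duality estimate, $|L_g(f)|\le C\|S_\psi(g)\|_{c^{p(\cdot)}}\|S_\varphi(f)\|_{s^{p(\cdot)}}\le C\|g\|_{CMO^{p(\cdot)}}\|f\|_{H^{p(\cdot)}}$. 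Since $\mathcal S_\infty$ is dense in $H^{p(\cdot)}$, $L_g$ extends continuously with the stated bound. One must check the extension is independent of the defining $\varphi,\psi$, which is exactly where the Plancherel--P\^olya inequality and the independence of the $CMO^{p(\cdot)}$ norm on the kernel are used.

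For part (2): given $L\in(H^{p(\cdot)})^*$, the composition $L\circ T_\psi$ is a bounded linear functional on $s^{p(\cdot)}$ (because $T_\psi:s^{p(\cdot)}\to H^{p(\cdot)}$ is bounded), hence by the sequence-space duality there is $\{t_Q\}\in c^{p(\cdot)}$ with $L(T_\psi(\{s_Q\}))=\sum_Q s_Q\overline{t_Q}$ and $\|\{t_Q\}\|_{c^{p(\cdot)}}\le C\|L\|$. Set $g:=T_{\tilde\varphi}(\{t_Q\})$ (the synthesis operator built from the companion kernel); then $g\in CMO^{p(\cdot)}$ with $\|g\|_{CMO^{p(\cdot)}}\le C\|L\|$, and for $f\in\mathcal S_\infty$, using $f=T_\psi(S_\varphi f)$ and unwinding the pairing gives $L(f)=\sum_Q\langle f,\varphi_Q\rangle\overline{t_Q}=L_g(f)$. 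A density argument extends $L=L_g$ to all of $H^{p(\cdot)}$.

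The main obstacle is the sequence-space duality $(s^{p(\cdot)})^*=c^{p(\cdot)}$ together with the boundedness of $S_\varphi$ and $T_\psi$ on $CMO^{p(\cdot)}$ and $H^{p(\cdot)}$ in the variable-exponent setting: the definition of $CMO^{p(\cdot)}$ is a supremum over dyadic cubes $P$ of a Carleson-type average normalized by $\|\chi_P\|_{p(\cdot)}^{-2}$, and controlling this under the almost-orthogonality estimates requires the key lemmas on $\|\chi_B\|_{p(\cdot)}$ (Lemma \ref{s2l2}), the generalized H\"older inequality (Lemma \ref{s2le5}), and the Fefferman--Stein vector-valued maximal inequality (Lemma \ref{s2p2}), with the nontrivial point being that the exponents $2/p^-$ appearing when one passes from $\|\chi_{P'}\|_{p(\cdot)}^2$ to a power of $|P'|/|P|$ interact correctly with the orthogonality decay $2^{-|j-j'|L}$ and the geometric decay over the annuli $B_i$, $E_m^i$ — exactly the estimates already carried out in the proof of Theorem \ref{s2t1}. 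Once the sequence spaces are set up and those estimates are in hand, the duality theorem itself is a formal consequence; I would therefore devote the bulk of the argument to establishing the sequence-space results and only sketch the diagram-chase that derives (1) and (2).
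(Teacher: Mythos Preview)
Your outline for part (2) matches the paper's argument closely: compose $L$ with $T_\psi$, represent the resulting functional on $s^{p(\cdot)}$ by some $t\in c^{p(\cdot)}$, push $t$ back via $T_\psi$, and use Propositions \ref{s3p2}--\ref{s3p3}. The paper also proves $\|t\|_{c^{p(\cdot)}}\le C\|\ell\|$ by an $l^2(X,d\sigma)$ duality argument combined with Lemmas \ref{s2le5} and \ref{s2l2}, much as you suggest.

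The gap is in part (1). You assert the pairing estimate
\[
\Bigl|\sum_Q s_Q\,\overline{t_Q}\Bigr|\le C\|\{s_Q\}\|_{s^{p(\cdot)}}\|\{t_Q\}\|_{c^{p(\cdot)}}
\]
and say the needed work is ``exactly the estimates already carried out in the proof of Theorem \ref{s2t1}.'' That is not right. The Plancherel--P\^olya argument in Theorem \ref{s2t1} compares two Carleson-type norms via almost-orthogonality and the annular decompositions $B_i$, $E_m^i$; it never produces an inequality linking an $s^{p(\cdot)}$ quantity to a $c^{p(\cdot)}$ quantity. Those estimates do suffice for the boundedness of $T_\psi$ on $c^{p(\cdot)}$ (Proposition \ref{s3p3}), but not for the embedding $c^{p(\cdot)}\hookrightarrow (s^{p(\cdot)})^*$.

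What the paper actually does for part (1) is a direct stopping-time / atomic argument, and this is where the genuinely new ingredients enter: it invokes Lemma \ref{s2le2} (the Calder\'on identity with \emph{compactly supported} $\phi$, so that the building blocks have the right support for atoms), defines level sets $\Omega_i=\{\mathcal G^d_\phi(h)>2^i\}$ and the collections $B_i$ of cubes selected by $|Q\cap\Omega_i|>\tfrac12|Q|\ge|Q\cap\Omega_{i+1}|$, passes to maximal cubes $\tilde Q\in B_i$, applies Cauchy--Schwarz inside each $\tilde Q$ so that the $CMO^{p(\cdot)}$ norm peels off one factor, and then controls $\sum_{i}\sum_{\tilde Q}\lambda_{\tilde Q}$ by Lemma \ref{s2l6} together with the vector-valued maximal inequality (Lemma \ref{s2p2}) and the $H^{p(\cdot)}$ square-function bound. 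None of this is visible in the Plancherel--P\^olya computation. If you want to run your sequence-space route for part (1), you will still need this Carleson-embedding / atomic-decomposition step to prove the pairing inequality; it does not come for free from Theorem \ref{s2t1}.
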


To prove this theorem,
we first introduce sequence spaces with variable exponents.
For $p(\cdot)\in LH$, $0<p^-\le p^+\leq1$, the sequence space
$s^{p(\cdot)}$ consists all complex-value sequences
$$
s^{p(\cdot)}=\left\{
\{s_Q\}_Q:\|s_Q\|_{s^{p(\cdot)}}:=
\left\|\bigg\{\sum_Q
|s_Q|^2|Q|^{-1}\chi_Q\bigg\}^{1/2}\right\|_{L^{p(\cdot)}}<\infty
\right\};
$$
the sequence space $c^{p(\cdot)}$ consists all complex-value sequences
$$
c^{p(\cdot)}=\left\{
\{t_Q\}_Q:\|t_Q\|_{c^{p(\cdot)}}:=
\sup_{P}\left\{
\frac{|P|}{\|\chi_P\|^2_{L^{p(\cdot)}}}
\sum_{Q\subset P}|t_Q|^2
\right\}^{1/2}
<\infty
\right\}.
$$

We mention that, the sequence spaces $s^p$ and $c^1$ were first introduced
by Frazier and Jawerth (\cite{FJ2}), $c^p$ was introduced by
Lee et al (\cite{LLL}), the sequence space $f_{p(\cdot),q(\cdot)}^{s(\cdot),\phi}$
corresponding to the space $F_{p(\cdot),q(\cdot)}^{s(\cdot),\phi}$ was introduced
by Yang et al. (\cite{YZY}).
We also remark that Zhuo, Yang and Liang (\cite{ZYL}) showed that
$p(\cdot)-$Carleson measure characterizations are
presented for the dual space of $H^{p(\cdot)}$.
However, the main results and the methods used in our paper
are quite different. In order to prove
our main result in this section,
we need the following lemma.

\begin{lemma}[\cite{NS}]\label{s2l6}Assume that $p^+\leq 1$. For sequences of scalars $\{\lambda_j\}_{j=1}^{\infty}$
and sequences of $(p(\cdot),q)-$atoms $\{a_j\}$,
we have $$\sum_{j=1}^{\infty}|\lambda_j|\leq \mathcal{A}(\{\lambda_j\}_{j=1}^{\infty},\{Q_j\}_{j=1}^{\infty}),$$
where
$$
  \mathcal{A}(\{\lambda_j\}_{j=1}^\infty,\{Q_j\}_{j=1}^\infty)
  =\left\|\left\{\sum_{j}\left(\frac{|\lambda_j|\chi_{Q_j}}{\|\chi_{Q_j}\|_{L^{p(\cdot)}}}\right)^{p^-}
\right\}^{\frac{1}{p^-}}\right\|_{L^{p(\cdot)}}.
$$
\end{lemma}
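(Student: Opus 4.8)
The plan is to work entirely with the modular $\rho(f)=\int_{\mathbb R^n}|f(x)|^{p(x)}\,dx$ and to exploit the elementary superadditivity of $t\mapsto t^{p(x)/p^-}$; note that the atoms $a_j$ themselves never enter the estimate, only the cubes $Q_j$. Since both sides of the asserted inequality are positively homogeneous of degree one in $\{\lambda_j\}$, after discarding the trivial case $\mathcal A(\{\lambda_j\},\{Q_j\})=\infty$ I may normalize so that $\mathcal A(\{\lambda_j\},\{Q_j\})=1$. Put $\mu_j:=|\lambda_j|/\|\chi_{Q_j}\|_{L^{p(\cdot)}}$ and $g:=\bigl(\sum_j\mu_j^{p^-}\chi_{Q_j}\bigr)^{1/p^-}$, so that the normalization reads $\|g\|_{L^{p(\cdot)}}=1$ and hence $\rho(g)\le1$ by the standard norm--modular comparison for exponents in $\mathcal P^0$.

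First I would record that for each $j$ the pointwise bound $g\ge\mu_j\chi_{Q_j}$ gives $|\lambda_j|=\|\mu_j\chi_{Q_j}\|_{L^{p(\cdot)}}\le\|g\|_{L^{p(\cdot)}}=1$, so that every $|\lambda_j|\le1$; this is needed at the very end. The key step is the pointwise inequality
\[
g(x)^{p(x)}=\Bigl(\sum_j\mu_j^{p^-}\chi_{Q_j}(x)\Bigr)^{p(x)/p^-}\ge\sum_j\mu_j^{p(x)}\chi_{Q_j}(x),
\]
valid because $p(x)/p^-\ge1$ and $s\mapsto s^{p(x)/p^-}$ is superadditive on $[0,\infty)$ (indeed $\sum_k a_k^{\theta}\le\bigl(\sum_k a_k\bigr)^{\theta-1}\sum_k a_k=\bigl(\sum_k a_k\bigr)^{\theta}$ for $\theta\ge1$ and $a_k\ge0$). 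Integrating over $\mathbb R^n$ and invoking $\rho(g)\le1$ yields $\sum_j\int_{Q_j}\mu_j^{p(x)}\,dx\le1$, that is, $\sum_j\rho(\mu_j\chi_{Q_j})\le1$.

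To finish, fix $j$ with $\lambda_j\neq0$ and note $\|\mu_j\chi_{Q_j}/|\lambda_j|\|_{L^{p(\cdot)}}=1$, whence $\rho(\mu_j\chi_{Q_j}/|\lambda_j|)=1$; then, using $|\lambda_j|\le1$ and $p(x)\le p^+$,
\[
\rho(\mu_j\chi_{Q_j})=\int_{Q_j}|\lambda_j|^{p(x)}\Bigl(\tfrac{\mu_j}{|\lambda_j|}\Bigr)^{p(x)}\,dx\ge|\lambda_j|^{p^+}\int_{Q_j}\Bigl(\tfrac{\mu_j}{|\lambda_j|}\Bigr)^{p(x)}\,dx=|\lambda_j|^{p^+}.
\]
Summing and combining with the previous paragraph gives $\sum_j|\lambda_j|^{p^+}\le1$, and since $0<p^+\le1$ together with $|\lambda_j|\le1$ forces $|\lambda_j|\le|\lambda_j|^{p^+}$, I conclude $\sum_j|\lambda_j|\le\sum_j|\lambda_j|^{p^+}\le1=\mathcal A(\{\lambda_j\},\{Q_j\})$.

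The computation is short, and the only point requiring care is the repeated passage between the Luxemburg norm $\|\cdot\|_{L^{p(\cdot)}}$ and the modular $\rho$ --- specifically the implications $\|g\|_{L^{p(\cdot)}}=1\Rightarrow\rho(g)\le1$ and $\|h\|_{L^{p(\cdot)}}=1\Rightarrow\rho(h)=1$, which hold precisely because $p^+<\infty$. I would cite the relevant norm--modular relations from \cite{CF} or \cite{DHHR} rather than reprove them, and then the three displayed steps above assemble into the claim.
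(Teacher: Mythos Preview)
Your argument is correct. The paper does not actually supply a proof of this lemma; it is quoted verbatim from \cite{NS} and used as a black box in the proof of Theorem~\ref{s3t1}. So there is no ``paper's own proof'' to compare against here.

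For the record, every step you wrote checks out: the homogeneity reduction to $\mathcal A=1$, the pointwise domination $g\ge\mu_j\chi_{Q_j}$ giving $|\lambda_j|\le1$, the superadditivity of $t\mapsto t^{p(x)/p^-}$ yielding $\sum_j\rho(\mu_j\chi_{Q_j})\le\rho(g)\le1$, the modular lower bound $\rho(\mu_j\chi_{Q_j})\ge|\lambda_j|^{p^+}$ via $|\lambda_j|^{p(x)}\ge|\lambda_j|^{p^+}$, and finally $|\lambda_j|\le|\lambda_j|^{p^+}$ from $p^+\le1$. The two norm--modular facts you flag (that $\|g\|_{L^{p(\cdot)}}=1$ forces $\rho(g)\le1$, and that $\|h\|_{L^{p(\cdot)}}=1$ with $p^+<\infty$ forces $\rho(h)=1$) are standard and are indeed in \cite{CF,DHHR}. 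Your remark that the atoms $a_j$ play no role is also on point: only the cubes $Q_j$ enter.
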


To prove Theorem \ref{s3t1}, we also need the following
two propositions.
\begin{proposition}\label{s3p2}
Suppose that $p(\cdot)\in LH$, $0<p^-\le p^+\leq1$
and $\varphi,\;\psi$ satisfy (\ref{s2d1}).
The linear operator $S_\varphi: H^{p(\cdot)}\mapsto s^{p(\cdot)}$
and $T_\psi: s^{p(\cdot)}\mapsto H^{p(\cdot)}$, respectively, are
bounded. Furthermore, $T_\psi\circ S_\varphi$ is the identity on
$H^{p(\cdot)}$.
\end{proposition}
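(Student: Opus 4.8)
The plan is to prove boundedness of the two operators separately and then deduce the identity from the discrete Calder\'on reproducing formula.

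\emph{Boundedness of $S_\varphi: H^{p(\cdot)}\to s^{p(\cdot)}$.} First I would observe that, by the definition of the sequence norm, $\|S_\varphi(f)\|_{s^{p(\cdot)}}=\big\|\{\sum_Q|\langle f,\varphi_Q\rangle|^2|Q|^{-1}\chi_Q\}^{1/2}\big\|_{L^{p(\cdot)}}$. Writing $\langle f,\varphi_Q\rangle=|Q|^{1/2}\tilde\varphi_j\ast f(z_Q)$ and using that for $x\in Q$ one has $|\tilde\varphi_j\ast f(z_Q)|\le C\,\inf_{z\in Q}\mathcal M_s(\tilde\varphi_j\ast f)(z)$ for a suitable power $s<p^-$ of the Hardy--Littlewood maximal function (a standard consequence of the mean value/Peetre-type estimate), the quantity above is dominated pointwise by $\big(\sum_j |\mathcal M_s(\tilde\varphi_j\ast f)|^2\big)^{1/2}$ up to harmless constants. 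Then I would invoke the Fefferman--Stein vector-valued maximal inequality in the variable exponent setting, namely Lemma \ref{s2p2} applied to $L^{p(\cdot)/s}$ with $q=2/s>1$ (legitimate since $p(\cdot)/s\in LH\cap\mathcal P^0$), to reduce matters to $\big\|\{\sum_j|\tilde\varphi_j\ast f|^2\}^{1/2}\big\|_{L^{p(\cdot)}}=\|\mathcal G(f)\|_{L^{p(\cdot)}}$, which is equivalent to $\|f\|_{H^{p(\cdot)}}$ by the known square-function characterization of variable Hardy spaces. Discretizing $\mathcal G$ into $\mathcal G^d$ and comparing via a Plancherel--P\^olya type estimate for $H^{p(\cdot)}$ closes this half.

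\emph{Boundedness of $T_\psi: s^{p(\cdot)}\to H^{p(\cdot)}$.} Given a sequence $\{s_Q\}\in s^{p(\cdot)}$, I would produce an atomic decomposition of $T_\psi(\{s_Q\})=\sum_Q s_Q\psi_Q$ in the spirit of Frazier--Jawerth. Form the level sets $\Omega_k=\{x:(\sum_Q|s_Q|^2|Q|^{-1}\chi_Q)^{1/2}>2^k\}$, let $\{Q_{k,i}\}_i$ be the maximal dyadic cubes in $\Omega_k$, group the tiles $Q$ according to which $Q_{k,i}$ they sit in, and set $a_{k,i}=\lambda_{k,i}^{-1}\sum_{Q\in\mathcal Q_{k,i}}s_Q\psi_Q$ with $\lambda_{k,i}=C\,2^k\|\chi_{Q_{k,i}}\|_{L^{p(\cdot)}}$. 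Each $a_{k,i}$ is (a constant multiple of) a $(p(\cdot),2)$-atom: the support and moment conditions come from those of $\psi$ (which has all the needed vanishing moments since $\widehat\psi$ is supported away from the origin), and the $L^2$ size bound follows from an $L^2$ almost-orthogonality estimate for $\sum_{Q\in\mathcal Q_{k,i}}s_Q\psi_Q$ together with the definition of $\Omega_k$ and Lemma \ref{s2l2}. Then $\sum_{k,i}|\lambda_{k,i}|$ is controlled using Lemma \ref{s2l6}: $\sum_{k,i}|\lambda_{k,i}|\le \mathcal A(\{\lambda_{k,i}\},\{Q_{k,i}\})\lesssim \|\{s_Q\}\|_{s^{p(\cdot)}}$, where the last step is the standard comparison of $\|\sum_{k,i}(2^k\chi_{Q_{k,i}})^{p^-}\|^{1/p^-}$ with the $L^{p(\cdot)}$ norm of the sequence square function via the distribution of the $\Omega_k$. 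This yields $\|T_\psi(\{s_Q\})\|_{H^{p(\cdot)}}\lesssim \|\{s_Q\}\|_{s^{p(\cdot)}}$.

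\emph{The identity $T_\psi\circ S_\varphi=\mathrm{id}$.} This is immediate on a dense class from Lemma \ref{s2l1}: for $f\in H^{p(\cdot)}\cap\mathcal S_\infty$ (or $f\in H^{p(\cdot)}\cap L^q$, using Lemma \ref{s2le2}), $T_\psi(S_\varphi(f))=\sum_Q\langle f,\varphi_Q\rangle\psi_Q=f$ with convergence in $\mathcal S'_\infty$ and, after the atomic estimates above, in $H^{p(\cdot)}$; a density argument then extends it to all of $H^{p(\cdot)}$. The main obstacle I anticipate is the $L^2$ almost-orthogonality estimate showing that each $a_{k,i}$ obeys the required $(p(\cdot),2)$-atom size bound — one must carefully exploit the interaction estimate \eqref{s2i2} for $\psi_Q\ast\psi_{Q'}$, the geometry of the maximal cubes $Q_{k,i}\subset\Omega_k$ (including handling tiles whose support leaks slightly outside $Q_{k,i}$, which forces one to enlarge the supporting cube by a fixed dilate), and Lemma \ref{s2l2} to convert the resulting $|Q_{k,i}|$ factors into $\|\chi_{Q_{k,i}}\|_{L^{p(\cdot)}}$; keeping all constants uniform in $k,i$ is the delicate point. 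The vector-valued maximal bound and Lemma \ref{s2l6} do the rest of the bookkeeping routinely.
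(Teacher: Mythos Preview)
Your treatment of $S_\varphi$ is essentially the paper's, which simply quotes the discrete square-function characterization $\|f\|_{H^{p(\cdot)}}\sim\|\mathcal G^d(f)\|_{L^{p(\cdot)}}$ from Lemma~\ref{s3l1}; your Peetre-maximal detour is not needed since $\|S_\varphi(f)\|_{s^{p(\cdot)}}$ \emph{is} $\|\mathcal G^d(f)\|_{L^{p(\cdot)}}$ by definition.

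For $T_\psi$ you take a genuinely different route. The paper stays on the square-function side: it writes $\|T_\psi(\{s_Q\})\|_{H^{p(\cdot)}}$ via $\mathcal G^d$, then invokes the almost-orthogonality estimate \eqref{s2i2}, the Frazier--Jawerth estimate of \cite[pp.~147--148]{FJ2} (which dominates the resulting interaction sums pointwise by powers of the Hardy--Littlewood maximal function applied to $\{|s_Q||Q|^{-1/2}\chi_Q\}$), H\"older's inequality, and the vector-valued maximal inequality of Lemma~\ref{s2p2}. This is short and avoids atoms entirely. Your atomic/level-set approach is instead the machinery the paper deploys in the proof of Theorem~\ref{s3t1}, not here.

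There is, however, a real gap in your version. The $\psi$ of \eqref{s2d1} has $\widehat\psi$ supported in an annulus, so $\psi$ is real-analytic and has \emph{no} compact spatial support. Consequently $a_{k,i}=\lambda_{k,i}^{-1}\sum_{Q\in\mathcal Q_{k,i}}s_Q\psi_Q$ is not supported in any dilate of $Q_{k,i}$, and your remark about ``enlarging the supporting cube by a fixed dilate'' cannot yield a $(p(\cdot),2)$-atom in the paper's sense. The atomic route is salvageable, but you must either (i) switch to the compactly supported $\phi$ of Lemma~\ref{s2le2}, exactly as the paper does when it runs this argument inside the proof of Theorem~\ref{s3t1}, or (ii) replace atoms by smooth molecules and establish a molecular synthesis bound for $H^{p(\cdot)}$. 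Either repair costs roughly the same work as the paper's direct maximal-function argument, which is why the paper prefers the latter for this proposition.
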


\begin{proposition}\label{s3p3}
Suppose that $p(\cdot)\in LH$, $0<p^-\le p^+\leq1$
and $\varphi,\;\psi$ satisfy (\ref{s2d1}).
The linear operator $S_\varphi: CMO^{p(\cdot)}\mapsto c^{p(\cdot)}$
and $T_\psi: c^{p(\cdot)}\mapsto CMO^{p(\cdot)}$, respectively, are
bounded. Furthermore, $T_\psi\circ S_\varphi$ is the identity on
$CMO^{p(\cdot)}$.
\end{proposition}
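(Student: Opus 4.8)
\textbf{Proof proposal for Proposition \ref{s3p3}.}
The plan is to prove the three assertions in sequence, mirroring the structure used for Proposition \ref{s3p2} but replacing the $s^{p(\cdot)}$-norm estimates by the corresponding Carleson-type estimates over dyadic cubes $P$. For the boundedness of $S_\varphi:CMO^{p(\cdot)}\mapsto c^{p(\cdot)}$, I would simply unwind the definitions: given $g\in CMO^{p(\cdot)}$, the sequence $\{\langle g,\varphi_Q\rangle\}_Q$ has $c^{p(\cdot)}$-norm equal (up to the factor $|Q|^{1/2}$ built into $\varphi_Q$) to the quantity appearing in Corollary \ref{s2c1}; indeed $|\langle g,\varphi_Q\rangle|^2=|Q|\,|\varphi_j\ast g(z_Q)|^2$ after rescaling, so the defining supremum for $\|S_\varphi(g)\|_{c^{p(\cdot)}}$ is exactly (a constant times) the discrete $CMO^{p(\cdot)}$-norm from Corollary \ref{s2c1}, whence $\|S_\varphi(g)\|_{c^{p(\cdot)}}\le C\|g\|_{CMO^{p(\cdot)}}$.

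For the boundedness of $T_\psi:c^{p(\cdot)}\mapsto CMO^{p(\cdot)}$, given $\{s_Q\}_Q\in c^{p(\cdot)}$ I would set $g=\sum_Q s_Q\psi_Q$ and estimate $\|g\|_{CMO^{p(\cdot)}}$ via the definition: fix a dyadic cube $P$ and bound $\frac{|P|}{\|\chi_P\|^2_{p(\cdot)}}\int\sum_{R\subset P}|R|^{-1}|\langle g,\varphi_R\rangle|^2\chi_R$. Expanding $\langle g,\varphi_R\rangle=\sum_{Q}s_Q\langle\psi_Q,\varphi_R\rangle$ and using the almost-orthogonality estimate (\ref{s2i2})–(\ref{s2i3}) (with $\psi,\varphi$ now in the roles of the two bump functions) together with Hölder's inequality in $j'$ exactly as in the proof of Theorem \ref{s2t1}, the sum over $Q$ with $Q\subset 3P$ is controlled by $\|\{s_Q\}\|_{c^{p(\cdot)}}^2$ directly, while the sum over $Q$ far from $P$ is organized into the annular families $B_i$ and handled by the same geometric decay plus the ratio estimate $\frac{\|\chi_{P'}\|_{p(\cdot)}^2}{\|\chi_P\|_{p(\cdot)}^2}\frac{|P|}{|P'|}\le C(|P'|/|P|)^{2/p^--1}$ from Lemma \ref{s2l2}; choosing $L,M$ large (as in Theorem \ref{s2t1}) makes the resulting series converge, giving $\|T_\psi(\{s_Q\})\|_{CMO^{p(\cdot)}}\le C\|\{s_Q\}\|_{c^{p(\cdot)}}$. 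One subtlety to address is that $g$ must be shown to define an element of $\mathcal S'_\infty$ and the series to converge there; this follows from pairing against test functions in $\mathcal S_\infty$ and the same almost-orthogonality bounds.

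For the identity $T_\psi\circ S_\varphi=\mathrm{id}$ on $CMO^{p(\cdot)}$, I would invoke the discrete Calderón reproducing formula of Lemma \ref{s2l1} (equivalently the wavelet version of Deng–Han used in the proof of Theorem \ref{s2t1}), which gives $f=\sum_Q\langle f,\varphi_Q\rangle\psi_Q$ with convergence in $\mathcal S'_\infty$; since $CMO^{p(\cdot)}\subset\mathcal S'_\infty$, applying this to $f\in CMO^{p(\cdot)}$ yields $T_\psi(S_\varphi(f))=f$ as distributions modulo polynomials. The point needing care is that the reproducing identity in Lemma \ref{s2l1} converges in $\mathcal S'_\infty$ only against test functions in $\mathcal S_\infty$, so the equality $T_\psi\circ S_\varphi f = f$ should be interpreted in $\mathcal S'_\infty$ and then combined with the just-proved boundedness to conclude that $T_\psi\circ S_\varphi$ is the identity operator on the normed space $CMO^{p(\cdot)}$.

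I expect the main obstacle to be the far-away term in the $T_\psi$ bound: unlike in the square-function ($s^{p(\cdot)}$) setting, here the Carleson normalization $\frac{|P|}{\|\chi_P\|^2_{p(\cdot)}}$ does not transfer cleanly under the annular decomposition, so the exponent bookkeeping in the sums over $i,m,j,j'$ — and in particular verifying that the choices $L>\max\{1,n(2/p^--2)\}$, $M>n(2/p^--1)$ still force absolute convergence after inserting the ratio $\|\chi_{P'}\|_{p(\cdot)}^2/\|\chi_P\|_{p(\cdot)}^2$ — is the delicate computation. This is, however, essentially the same estimate already carried out in the proof of Theorem \ref{s2t1}, so it can be quoted rather than redone in full.
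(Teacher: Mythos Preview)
Your proposal is correct and follows essentially the same approach as the paper: the paper also reads off the boundedness of $S_\varphi$ directly from Corollary~\ref{s2c1}, writes $\langle T_\psi(\{s_{Q'}\}),\psi_Q\rangle=\sum_{Q'}s_{Q'}\langle\psi_{Q'},\psi_Q\rangle$ and then explicitly says that ``the rest of this proof is similar to that of Theorem~\ref{s2t1}'' (i.e., the near/far decomposition and the exponent bookkeeping you outline), and finally invokes Lemma~\ref{s2l1} for the identity $T_\psi\circ S_\varphi=\mathrm{id}$. Your additional remarks about convergence in $\mathcal S'_\infty$ are sound and slightly more careful than what the paper spells out.
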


Assume the above two propositions first,
then we return the proof of Theorem \ref{s3t1}.

\noindent\textit{Proof of Theorem \ref{s3t1}.}
By Lemma \ref{s2le2},
for all $f\in \mathcal {S_\infty}$
\begin{align*}
f(x)=\sum_{j\in \mathbb Z}2^{-jn}\sum_{\mathbf k\in\mathbb Z^n}
\phi_j\ast h(2^{-j}\mathbf{k})\phi_j(x-2^{-j}\mathbf{k})
=\sum_{Q}\left<h,\phi_Q\right>\phi_Q(x),
\end{align*}
where
the series also converges in $\mathcal S_\infty$
and $\phi\in \mathcal S_\infty$ is defined in Lemma \ref{s2le2}.
Let $g\in CMO^{p(\cdot)}$ and $f\in H^{p(\cdot)}$.
Define a linear functional $L_g$ on $\mathcal S_\infty$ by
\begin{align*}
&L_g(f)=\left<f,g\right>
=\sum_{Q}\left<h,\phi_Q\right>\left<\phi_Q,g\right>.
\end{align*}

Now we need the maximal square function defined by
\begin{align*}
\mathcal{G}_\phi^d(h)(x):=\left(\sum_{j\in \mathbb Z}
\sum_{\mathbf k\in\mathbb Z^n}\sup_{x_Q\in Q}|\phi_j\ast h(x_Q)|^2\chi_Q(x)\right)^{1/2}.
\end{align*}

Set
\begin{align*}
\Omega_i=\{x\in\mathbb R^n:\mathcal{G}_\phi^d(h)(x)>2^i\}.
\end{align*}
and
\begin{align*}
\widetilde\Omega_i=\{x\in\mathbb R^n:M{(\chi_{\Omega_i})}(x)>\frac{1}{10}\},
\end{align*}
where $M$ is the Hady-Littlewood maximal operator. Then $\Omega_i\subset\widetilde{\Omega_i}$.
By the $L^2$ boundedness of $M$, $|\widetilde{\Omega_i}|\le C|\Omega_i|.$
Denote $$B_i=\{Q:|Q\cap\Omega_i|> \frac{1}{2}|Q|, |Q\cap\Omega_{i+1}|\le \frac{1}{2}|Q|\}.$$
Following the discrete Calder\'on reproducing formula and denoting
$\tilde Q\in B_i$ are maximal dyadic cubes in $B_i$, we rewrite
\begin{align*}
f(x)=\sum_{i\in \mathbb Z}\sum_{\tilde Q\in B_i}\sum_{Q\subset\tilde Q}
\phi_Q\ast
f(x_Q)\phi_Q(x-x_Q).
\end{align*}

From Corollary \ref{s2c1} and the H\"older inequality,
it follows that
\begin{align*}
|L_g(f)|
&=\bigg|\sum_{Q}\left<h,\phi_Q\right>\left<\phi_Q,g\right>\bigg|\\
&=\bigg|\sum_{j\in \mathbb Z}\sum_{Q}|Q|\phi_j\ast h(x_Q)
\phi_j\ast g(x_Q)\bigg|\\
&\le\sum_{i\in \mathbb Z}\sum_{\tilde Q\in B_i}\sum_{Q\subset\tilde Q}
|\phi_Q\ast h(x_Q)||\phi_Q\ast g(x)|\\
&\le\sum_{i\in \mathbb Z}\sum_{\tilde Q\in B_i}
\left\{\sum_{Q\subset\tilde Q}
|\phi_Q\ast h(x_Q)|^2\right\}^{\frac{1}{2}}
\left\{\sum_{Q\subset\tilde Q}|\phi_Q\ast g(x)|^2\right\}^{\frac{1}{2}}\\
&\le\sum_{i\in \mathbb Z}\sum_{\tilde Q\in B_i}
\left\{\sum_{Q\subset\tilde Q}
|\phi_Q\ast f(x_Q)|^2\right\}^{\frac{1}{2}}
\left\{
\frac{|\tilde Q|}{\|\chi_{\tilde Q}\|^2_{L^p(\cdot)}}
\sum_{Q\subset\tilde Q}|\phi_Q\ast g(x)|^2\right\}^{\frac{1}{2}}\\
&\le C
\sum_{i\in \mathbb Z}\sum_{\tilde Q\in B_i}
\frac{\|\chi_{\tilde Q}\|_{L^p(\cdot)}}{|\tilde Q|^{1/2}}
\left\{\sum_{Q\subset\tilde Q}
|\phi_Q\ast h(x_Q)|^2\right\}^{\frac{1}{2}}\|g\|_{CMO^{p(\cdot)}}.
\end{align*}

We denote that $$\lambda_{\tilde Q}:=
\frac{\|\chi_{\tilde Q}\|_{L^p(\cdot)}}{|\tilde Q|^{1/2}}
\left\{\sum_{Q\subset\tilde Q}
|\phi_Q\ast h(x_Q)|^2\right\}^{\frac{1}{2}}.$$

Then
\begin{align*}
f(x)=\sum_{i\in \mathbb Z}\sum_{\tilde Q\in B_i}\sum_{Q\subset\tilde Q}\phi_Q\ast h(x_Q)\phi_Q(x-x_Q)=:\sum_{i}\sum_{\tilde Q\in B_i}\lambda_{\tilde Q}a_{\tilde Q}(x),
\end{align*}
where $$a_{\tilde Q}=\frac{1}{\lambda_{\tilde Q}}
\sum_{Q\subset\tilde Q}\phi_Q\ast
h(x_Q)\phi_Q(x-x_Q).$$

Here we have established the atomic decomposition for $H^{p(\cdot)}$.
In fact, from the definition of $a_{\tilde Q}$ and the support of $\phi$, we get that
$a_{\tilde Q}$ is supported in $5\tilde Q$.

We claim that
\begin{equation*}
  \mathcal{A}(\{\lambda_j\}_{j=1}^\infty,\{Q_j\}_{j=1}^\infty)
  \leq C\|f\|_{{H}^{p(\cdot)}}.
\end{equation*}

To prove the claim, we first observe that when $1< q<\infty$
\begin{align*}
\mathcal{A}(\{\lambda_j\}_{j=1}^\infty,\{Q_j\}_{j=1}^\infty)
&=\left\|\left\{\sum_{i}\sum_{\tilde Q\in B_i}\left(
\frac{|\lambda_{\tilde Q}|\chi_{5\tilde Q}}{\|\chi_{5\tilde Q}\|_{L^{p(\cdot)}}}\right)^{p^-}
\right\}^{\frac{1}{p^-}}\right\|_{L^{p(\cdot)}}\\
&\le C\left\|\left\{\sum_{i}\sum_{\tilde Q\in B_i}\left(
\left(\sum_{Q\subset\tilde Q}
|\phi_Q\ast h(x_Q)|^2\right)^{\frac{1}{2}}\chi_{5\tilde Q}\right)^{p^-}
\right\}^{\frac{1}{p^-}}
\right\|_{L^{p(\cdot)}}.
\end{align*}

Note that
$5\tilde Q\subset \tilde \Omega_i$,
when $\tilde Q\in B_i$.
Since $\Omega_i\subset\tilde\Omega_i$ for each $i\in\mathbb Z$ and
$|\tilde\Omega_i|\le C|\Omega_i|$, for all $x\in\mathbb R^n$ we have
\begin{align*}
\chi_{\tilde\Omega_i}(x)\le CM^{\frac{2}{p^-}}\chi_{\Omega_{i}}(x).
\end{align*}

Applying Lemma \ref{s2p2}, we have that
\begin{align*}
&\mathcal{A}(\{\lambda_j\}_{j=1}^\infty,\{Q_j\}_{j=1}^\infty)\\
\le& C\left\|\left\{\sum_{i}\left(
\left(\sum_{Q\subset\tilde Q}
|\phi_Q\ast h(x_Q)|^2\right)^{\frac{1}{2}}
\chi_{\tilde \Omega_i}\right)^{p^-}
\right\}^{\frac{1}{p^-}}
\right\|_{L^{p(\cdot)}}\\
\le& C\left\|\left\{\sum_{i}\left(
\left(\sum_{Q\subset\tilde Q}
|\phi_Q\ast h(x_Q)|^2\right)^{\frac{1}{2}}M^{\frac{2}{p^-}}\chi_{\Omega_{i}}\right)^{p^-}
\right\}^{\frac{1}{p^-}}
\right\|_{L^{p(\cdot)}}\\
\le& C\left\|\left\{\sum_{i}
\left(\sum_{Q\subset\tilde Q}
|\phi_Q\ast h(x_Q)|^2\right)^{\frac{p^-}{2}}\chi_{\Omega_{i}}^2
\right\}^{\frac{1}{2}}
\right\|^{\frac{2}{p^-}}_{L^{\frac{2p(\cdot)}{p^-}}}\\
\le& C\left\|\left\{\sum_{i}\left(
\left(\sum_{Q\subset\tilde Q}
|\phi_Q\ast h(x_Q)|^2\right)^{\frac{1}{2}}\chi_{ \Omega_i}\right)^{p^-}
\right\}^{\frac{1}{p^-}}
\right\|_{L^{p(\cdot)}}.
\end{align*}

Observing that $\Omega_{i+1}\subset\Omega_i$ and
$|\bigcap_{i=1}^\infty\Omega_i|=0$,
then for $a.e~x\in\mathbb R^n$ we have
\begin{align*}
\sum_{i=-\infty}^{\infty}2^{i}\chi_{\Omega_i}(x)
&=\sum_{i=-\infty}^{\infty}2^i\sum_{j=i}^\infty\chi_{\Omega_j\backslash{\Omega_{j+1}}}(x)
=2\sum_{j=-\infty}^{\infty}2^j\chi_{\Omega_j\backslash{\Omega_{j+1}}}(x).
\end{align*}

By repeating the similar argument in the proof of \cite[Theorem 1.1]{T17AFA},
we have
\begin{align*}
&\left\|\left\{\sum_{i}\left(
\left(\sum_{Q\subset\tilde Q}
|\phi_Q\ast h(x_Q)|^2\right)^{\frac{1}{2}}
\chi_{ \Omega_i}\right)^{p^-}
\right\}^{\frac{1}{p^-}}
\right\|_{L^{p(\cdot)}}\\
&\le
C\left\|\left\{\sum_{i}\left(
\left(\sum_{Q\subset\tilde Q}
|\phi_Q\ast h(x_Q)|^2\right)^{\frac{1}{2}}\chi_{ \Omega_i\setminus\Omega_{i+1}}\right)^{p^-}
\right\}^{\frac{1}{p^-}}
\right\|_{L^{p(\cdot)}}\\
&\le C\inf\left\{\lambda>0:\int_{\mathbb R^n}\left(
\sum_i\frac{2^i\chi_{ \Omega_i\setminus\Omega_{i+1}}}{\lambda}
\right)^{p(x)}dx\le 1\right\}\\
&= C\inf\left\{\lambda>0:\sum_i\int_{{ \Omega_i\setminus\Omega_{i+1}}}
\left(\frac{2^i}{\lambda}
\right)^{p(x)}dx\le 1\right\}\\
&\le C\inf\left\{\lambda>0:\int_{\mathbb R^n}
\left(\frac{\mathcal G_\phi^df(x)}{\lambda}
\right)^{p(x)}dx\le 1\right\}\leq C\|f\|_{{H}^{p(\cdot)}}.
\end{align*}
Therefore, we have proved the claim. Moreover,
we can obtain that every $a_{\tilde Q}$ is a $(p(\cdot),q)-$atom.

Therefore,
\begin{align*}
|L_g(f)|
&=\bigg|\sum_{Q}\left<h,\phi_Q\right>\left<\phi_Q,g\right>\bigg|\\
&\le C
\sum_{i\in \mathbb Z}\sum_{\tilde Q\in B_i}
\frac{\|\chi_{\tilde Q}\|_{L^p(\cdot)}}{|\tilde Q|^{1/2}}
\left\{\sum_{Q\subset\tilde Q}
|\phi_Q\ast h(x_Q)|^2\right\}^{\frac{1}{2}}\|g\|_{CMO^{p(\cdot)}}\\
&\le C\sum_{j=1}^{\infty}|\lambda_j|\|g\|_{CMO^{p(\cdot)}}\\
&\leq \mathcal{A}(\{\lambda_j\}_{j=1}^{\infty},\{Q_j\}_{j=1}^{\infty})
\|g\|_{CMO^{p(\cdot)}}\\
&\le C\|f\|_{H^{p(\cdot)}}\|g\|_{CMO^{p(\cdot)}}.
\end{align*}
This shows that $g\in(H^{p(\cdot)})^\ast$
and
$$\|L_g\|\le C\|g\|_{CMO^{p(\cdot)}}.
$$

Conversely, we first prove that every continuous linear functional $\ell$ on $s^{p(\cdot)}$
satisfies $\ell=\ell_t$ for some $t\in c^{p(\cdot)}$ with $\|t\|_{c^{p(\cdot)}}
\le C\|\ell\|$.
For $s=\{s_Q\}_Q\in s^{p(\cdot)}$,
let $\ell(s)=\sum_{Q}s_Qt_Q$.
Fix a dyadic cube $P$ in $\mathbb R^n$.
Let $X$ be the sequence space consisting of
$s=\{s_Q\}_{Q\subset P}$, and define a counting measure on dyadic cubes
$Q\subset P$ by
$d\sigma(Q)=\frac{|Q|}{|P|^{-1}\|\chi_P\|^2_{L^{p(\cdot)}}}$.

Then
\begin{align*}
&\left(\frac{|P|}{\|\chi_P\|^2_{L^{p(\cdot)}}}
\sum_{Q\subset P}|t_Q|^2\right)^{1/2}\\
&=\left\||t_Q||Q|^{-1/2}\right\|_{l^2(X,d\sigma)}\\
&=\sup_{\|s\|_{l^2(X,d\sigma)}\le 1}
\left|\frac{|P|}{\|\chi_P\|^2_{L^{p(\cdot)}}}
\sum_{Q\subset P}|Q||s_Q||Q|^{-1/2}|t_Q|\right|\\
&=\sup_{\|s\|_{l^2(X,d\sigma)}\le 1}
\left|\ell\left(\frac{|P|}{\|\chi_P\|^2_{L^{p(\cdot)}}}
|Q||s_Q||Q|^{-1/2}\right)\right|\\
&=\|\ell\|\sup_{\|s\|_{l^2(X,d\sigma)}\le 1}
\left\|\left\{\frac{|P||s_Q||Q|^{1/2}}{\|\chi_P\|^2_{L^{p(\cdot)}}}
\right\}_{Q\subset P}\right\|_{s^{p(\cdot)}}.
\end{align*}

Choose that $0<r(x)<\infty$ such that
$\frac{1}{p(x)}=1+\frac{1}{r(x)}$.
By Lemma \ref{s2le5} and Lemma \ref{s2l2} and the H\"older inequality, we have

\begin{align*}
&\left\|\left\{\frac{|P||s_Q||Q|^{1/2}}{\|\chi_P\|^2_{L^{p(\cdot)}}}
\right\}_{Q\subset P}\right\|_{s^{p(\cdot)}}\\
&=
\left\|\left\{\sum_{Q\subset P}\frac{|P|^2|s_Q|^2}
{\|\chi_P\|^4_{L^{p(\cdot)}}}\chi_Q
\right\}^{1/2}\right\|_{L^{p(\cdot)}}\\
&\le C
\left\|\left\{\sum_{Q\subset P}\frac{|P|^2|s_Q|^2}
{\|\chi_P\|^4_{L^{p(\cdot)}}}\chi_Q
\right\}^{1/2}\right\|_{L^1}\|\chi_Q\|_{L^{r(\cdot)}}\\
&=C|P|\left\{\frac{1}{|P|}\int_P
\bigg(\sum_{Q\subset P}\frac{|s_Q|^2}
{(\|\chi_P\|^2_{L^{p(\cdot)}}|P|^{-1})^2}
\chi_Q(x)\bigg)^{1/2}dx\right\}\|\chi_Q\|_{L^{p(\cdot)}}|Q|^{-1}\\
&\le C|P|\left\{\frac{1}{|P|}\int_P
\bigg(\sum_{Q\subset P}\frac{|s_Q|^2}
{(\|\chi_P\|^2_{L^{p(\cdot)}}|P|^{-1})^2}
\chi_Q(x)\bigg)dx\right\}^{1/2}\|\chi_Q\|_{L^{p(\cdot)}}|Q|^{-1}
\end{align*}

Then,
\begin{align*}
&\left\|\left\{\frac{|P||s_Q||Q|^{1/2}}{\|\chi_P\|^2_{L^{p(\cdot)}}}
\right\}_{Q\subset P}\right\|_{s^{p(\cdot)}}\\
&\le C\left\{\int_P
\bigg(\sum_{Q\subset P}\frac{|s_Q|^2}
{\|\chi_P\|^2_{L^{p(\cdot)}}|P|^{-1}}
\chi_Q(x)\bigg)dx\right\}^{1/2}
\frac{\|\chi_Q\|_{L^{p(\cdot)}}}{\|\chi_P\|_{L^{p(\cdot)}}}
\frac{|P|}{|Q|}\\
\end{align*}

\begin{align*}
&\le C\left\{\int_P
\bigg(\sum_{Q\subset P}\frac{|s_Q|^2}
{\|\chi_P\|^2_{L^{p(\cdot)}}|P|^{-1}}
\chi_Q(x)\bigg)dx\right\}^{1/2}\\
&\le C
\bigg(\sum_{Q\subset P}\frac{|s_Q|^2|Q|}
{\|\chi_P\|^2_{L^{p(\cdot)}}|P|^{-1}}
\bigg)^{1/2}\\
&=\|s\|_{l^2(X,d\sigma)}.
\end{align*}

Thus,
\begin{align*}
\left(\frac{|P|}{\|\chi_P\|^2_{L^{p(\cdot)}}}
\sum_{Q\subset P}|t_Q|^2\right)^{1/2}
&\le\|\ell\|.
\end{align*}

Then let $L\in(H^{p(\cdot)})'$ and define $\ell=L\circ T_\psi$.
By proposition \ref{s3p2}, $\ell\in (s^{p(\cdot)})'$.
Thus, there exists $t=\{t_Q\}_Q\in c^{p(\cdot)}$ such that
$$
\ell(\{s_Q\}_Q)=\sum_Qs_Qt_Q
\quad\mbox{for}\quad f\in s^{p(\cdot)}
$$
and
$\|t\|_{c^{p(\cdot)}}\sim\|\ell\|\le C\|L\|$.
For $f\in c^{p(\cdot)}$,
we have
$$\ell\circ S_\varphi(f)=L\circ T_\psi\circ S_\varphi(f)
=L(f).
$$
Thus, for $f\in\mathcal S_0$
and letting $g=T_\psi(t)=\sum_Qt_Q\psi_Q$,
$$
L(f)=\ell\circ S_\varphi(f)=\sum_Q\left<f,\varphi_Q\right>t_Q
=\left<t,S_\varphi(f)\right>.
$$
Observe that
$\left<g,f\right>=\left<S_\psi(g),S_\varphi(f)\right>$
and
$\left<t,S_\varphi(f)\right>=\left<T_\psi(t),f\right>$.
Then we have,
$$
L(f)=\left<T_\psi(t),f\right>=L_g(f)
\quad\mbox{for}\quad f\in\mathcal S_0.
$$
Therefore, by Proposition \ref{s3p3}
$$
\|g\|_{CMO^{p(\cdot)}}\le C\|t\|_{c^{p(\cdot)}}
\le C\|L\|
$$
and the proof is complete.
\quad $\hfill\Box$

Before we give the proofs to the above two propositions,
we need the
following
equivalent characterizations of $H^{p(\cdot)}$, which,
for the case of inhomogeneous variable Hardy spaces, was
studied in \cite{T18}.

\begin{lemma}\cite{T17AFA}\label{s3l1}\quad Let $p(\cdot)\in LH$. Then for all $f\in \mathcal {S'_\infty}$,
\begin{align*}
\|f\|_{H^{p(\cdot)}}\sim \|\mathcal G(f)\|_{L^{p(\cdot)}}\sim \|\mathcal G^d(f)\|_{L^{p(\cdot)}}.
\end{align*}
\end{lemma}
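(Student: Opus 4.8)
The plan is to prove the two equivalences separately. The equivalence $\|f\|_{H^{p(\cdot)}}\sim\|\mathcal G(f)\|_{L^{p(\cdot)}}$ is the Littlewood--Paley--Stein characterization of the variable Hardy space and requires no new idea: starting from the grand-maximal definition of $H^{p(\cdot)}$ one uses the Calder\'on reproducing formula together with the vector-valued Fefferman--Stein inequality (Lemma \ref{s2p2}) exactly as in \cite{T17AFA} (compare also \cite{CW,NS}), so I would simply invoke that argument. The substance lies in the second equivalence $\|\mathcal G(f)\|_{L^{p(\cdot)}}\sim\|\mathcal G^d(f)\|_{L^{p(\cdot)}}$, a Plancherel--P\^olya type comparison between the continuous square function and its discretization, and I treat its two halves in turn. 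Throughout, fix $0<r<p^-$; since $p(\cdot)\in LH$ we have $p(\cdot)/r\in\mathcal B$, so Lemma \ref{s2p2} applies with exponent $2/r$ on $L^{p(\cdot)/r}$.

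For $\|\mathcal G^d(f)\|_{L^{p(\cdot)}}\lesssim\|\mathcal G(f)\|_{L^{p(\cdot)}}$: when $x\in Q_{j\mathbf k}$ one has $|x-2^{-j}\mathbf k|\le\sqrt n\,2^{-j}$, so the sample $|\psi_j\ast f(2^{-j}\mathbf k)|$ is dominated by the Peetre maximal function $\psi_j^*f(x):=\sup_{y}|\psi_j\ast f(y)|(1+2^{j}|x-y|)^{-n/r}$, which satisfies the standard pointwise bound $\psi_j^*f(x)\le C\,[M(|\psi_j\ast f|^r)(x)]^{1/r}$. Hence $\mathcal G^d(f)(x)\le C\big(\sum_{j}[M(|\psi_j\ast f|^r)(x)]^{2/r}\big)^{1/2}$; raising to the power $r$ and applying Lemma \ref{s2p2} with $g_j=|\psi_j\ast f|^r$ yields $\|\mathcal G^d(f)\|_{L^{p(\cdot)}}^r\le C\big\|\big(\sum_j|\psi_j\ast f|^2\big)^{r/2}\big\|_{L^{p(\cdot)/r}}=C\|\mathcal G(f)\|_{L^{p(\cdot)}}^r$.

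For the reverse bound, expand $f$ by the Deng--Han discrete Calder\'on reproducing formula already used in the proof of Theorem \ref{s2t1}, in the form $f=\sum_{j'}2^{-j'n}\sum_{Q'}\psi_{j'}\ast f(x_{Q'})\,\psi_{j'}(\cdot,x_{Q'})$ (taking the reproducing pair to be $(\psi,\psi)$ after the usual normalization), convolve with $\psi_j$, and apply the almost orthogonality estimate $(\ref{s2i2})$ to obtain
$$|\psi_j\ast f(x)|\le C\sum_{j'}2^{-|j-j'|L}\sum_{Q'}2^{-j'n}|\psi_{j'}\ast f(x_{Q'})|\frac{2^{(j\wedge j')n}}{(1+2^{(j\wedge j')}|x-x_{Q'}|)^{n+M}}.$$
Because the points $x_{Q'}$ form a $2^{-j'}$-net, the inner sum at scale $j'$ is controlled (for $M$ large) by $C\,2^{(j\wedge j'-j')n/r}[M(b_{j'})(x)]^{1/r}$ with $b_{j'}:=\sum_{Q'}|\psi_{j'}\ast f(x_{Q'})|^{r}\chi_{Q'}$, a routine maximal estimate for sums against the kernel $2^{mn}(1+2^{m}|\cdot|)^{-n-M}$. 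Absorbing the geometric factor $2^{-|j-j'|L}2^{(j\wedge j'-j')n/r}$ by Young's inequality in the indices $(j,j')$ and then using the $\ell^2$ triangle inequality gives $\mathcal G(f)(x)\le C\big(\sum_{j'}[M(b_{j'})(x)]^{2/r}\big)^{1/2}$; raising to the power $r$ and invoking Lemma \ref{s2p2} once more yields $\|\mathcal G(f)\|_{L^{p(\cdot)}}\le C\big\|\big(\sum_{j'}b_{j'}^{2/r}\big)^{r/2}\big\|_{L^{p(\cdot)/r}}^{1/r}=C\|\mathcal G^d(f)\|_{L^{p(\cdot)}}$.

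The main obstacle is this last step: converting the kernel sum $\sum_{Q'}2^{-j'n}|\psi_{j'}\ast f(x_{Q'})|\,2^{(j\wedge j')n}(1+2^{(j\wedge j')}|x-x_{Q'}|)^{-n-M}$ into a Hardy--Littlewood maximal function of the discrete data in such a way that one is left with exactly an $\ell^{2/r}$-summable sequence (so Lemma \ref{s2p2} applies) while keeping the $j\wedge j'$ dependence balanced against the rapid decay $2^{-|j-j'|L}$. Choosing $0<r<p^-$ and $M=M(n,r)$ large enough, and using $p(\cdot)\in LH\Rightarrow p(\cdot)/r\in\mathcal B$, is precisely what makes both halves close.
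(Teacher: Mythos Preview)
The paper does not give a proof of this lemma at all: it is stated with the citation \cite{T17AFA} and used as a black box, so there is no ``paper's own proof'' to compare against. Your proposal is a faithful reconstruction of the standard argument one expects in the cited reference---Peetre maximal function plus Lemma~\ref{s2p2} for $\|\mathcal G^d(f)\|_{L^{p(\cdot)}}\lesssim\|\mathcal G(f)\|_{L^{p(\cdot)}}$, and discrete Calder\'on reproducing formula together with almost orthogonality~(\ref{s2i2}) and the Frazier--Jawerth maximal estimate for the reverse inequality---and the outline is correct.

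One small point worth tightening: in the reverse direction you invoke the Deng--Han reproducing formula with the pair $(\psi,\psi)$; strictly speaking the formula in the paper (and in \cite{DH}) uses a pair $(\varphi,\psi)$ with a wavelet-type kernel $\psi_{j'}(\cdot,x_{Q'})$, so you should either use that pair and note that the final $\ell^2$ sum is then the $\mathcal G^d$ built from $\varphi$ (which is equivalent by the very Plancherel--P\^olya statement you are proving, or more safely by independence of the defining function), or else restrict to a $\psi$ normalized so that $\sum_j|\hat\psi(2^{-j}\xi)|^2=1$. Also, the maximal estimate you quote with the factor $2^{(j\wedge j'-j')n/r}$ is the Frazier--Jawerth bound (cf.\ \cite[pp.~147--148]{FJ2}); since it is invoked elsewhere in the paper (proof of Proposition~\ref{s3p2}) you may simply cite it rather than call it ``routine''. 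With these cosmetic adjustments the argument is complete.
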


We now are ready to prove Proposition \ref{s3p2} and
\ref{s3p3}.

\noindent\textit{Proof of Proposition \ref{s3p2}.}
By Lemma \ref{s3l1}, for $f\in H^{p(\cdot)}$,
$$
\|S_\varphi(f)\|_{s^{p(\cdot)}}=
\left\|\bigg\{\sum_Q
|\left<f,\varphi_Q\right>|^2|Q|^{-1}
\chi_Q\bigg\}^{1/2}\right\|_{L^{p(\cdot)}}
\le \|f\|_{H^{p(\cdot)}}.
$$

For $\{s_Q\}\in s^{p(\cdot)}$,
\begin{align*}
\|T_\psi(\{s_Q\})\|_{H^{p(\cdot)}}
&=\left\|\left(\sum_{j' \in \mathbb Z}
\sum_{\mathbf k' \in\mathbb Z^n}|\psi_j\ast (T_\psi(\{s_Q\}))(2^{-j}\mathbf k)|^2\chi_{Q'}\right)^{1/2}\right\|_{L^{p(\cdot)}}\\
&=\left\|\left(\sum_{j' \in \mathbb Z}
\sum_{\mathbf k' \in\mathbb Z^n}\big|\psi_j\ast
\big(\sum_Qs_Q\psi_Q\big)(2^{-j}\mathbf k)\big|^2\chi_{Q'}\right)^{1/2}\right\|_{L^{p(\cdot)}}.
\end{align*}

The rest of the proof
is closely related to \cite[Proposition 2.3]{T17AFA},
that is, it follows the similar routine as the proof of
\cite[Proposition 2.3]{T17AFA}. Namely,
by the almost-orthogonality estimates, the estimate in
\cite[pp. 147, 148]{FJ2}, H\"older's inequality and the
Fefferman-Stein vector-valued maximal function inequality
in Proposition \ref{s2p2}, we get
$$
\|T_\psi(\{s_Q\})\|_{H^{p(\cdot)}}\le C\|\{s_Q\}\|_{s^{p(\cdot)}}
$$

Finally, it is easy to check that from the discrete Calder\'on identity
introduced by Frazier and
Jawerth in Lemma \ref{s2l1},
$T_\psi\circ S_\varphi$ is the identity on
$H^{p(\cdot)}$.
\quad $\hfill\Box$

\noindent\textit{Proof of Proposition \ref{s3p3}.}
For any $g\in CMO^{p(\cdot)}$,
applying Corollary \ref{s2c1} yields
\begin{align*}
\|\{S_\varphi(g)\}\|_{c^p}
&=\sup_{P}\left\{
\frac{|P|}{\|\chi_P\|^2_{L^{p(\cdot)}}}
\sum_{Q\subset P}|\left<g,\varphi_Q\right>|^2
\right\}^{1/2}\\
&=
\sup_{P}\left\{\frac{|P|}{\|\chi_P\|^2_{p(\cdot)}}
\sum_{j=-\log_2\ell(P)}^\infty\sum_{\substack{Q\subset P\\
\ell(Q)=2^{-j-N}}}
|\varphi_j\ast f(x_Q)|^2|Q|\right\}^{1/2}\\
&\le C\|f\|_{CMO^{p(\cdot)}}.
\end{align*}

For $\{s_{Q'}\}\in c^{p(\cdot)}$,
\begin{align*}
&\|T_\psi(\{s_{Q'}\})\|_{CMO^{p(\cdot)}}\\
&=\sup_{P}\left\{\frac{|P|}{\|\chi_P\|^2_{p(\cdot)}}
\int_{\mathbb R^n}\sum_{Q\subset
P}|Q|^{-1}|\left<T_\psi(\{s_{Q'}\}),
\psi_Q\right>|^2\chi_{Q}(x)dx\right\}^{1/2}\\
&=\sup_{P}\left\{\frac{|P|}{\|\chi_P\|^2_{p(\cdot)}}
\int_{\mathbb R^n}\sum_{Q\subset
P}|Q|^{-1}\left|\bigg<\sum_{Q'}s_{Q'}\psi_{Q'},
\psi_Q\bigg>\right|^2\chi_{Q}(x)dx\right\}^{1/2}.
\end{align*}

The rest of this proof is similar to
that of Theorem \ref{s2t1}.
A same argument as the proof of Theorem \ref{s2t1},
we have
$$
\|T_\psi(\{s_{Q'}\})\|_{CMO^{p(\cdot)}}
\le C\|\{s_{Q'}\}\|_{c^{p(\cdot)}}.
$$

Finally, by Lemma \ref{s2l1}
we can easily get that from the Calder\'on reproducing formula
$T_\psi\circ S_\varphi$ is the identity operator on
$CMO^{p(\cdot)}$.
\quad $\hfill\Box$

\section{The Equivalence of $CMO^{p(\cdot)}$}
In this section, we will see that Carleson measure spaces with variable exponents $CMO^{p(\cdot)}$, Campanato space with variable exponent
$\mathfrak{L}_{q,p(\cdot),d}$ and H\"older-Zygmund spaces with variable exponents $\mathcal H_d^{p(\cdot)}$ coincide as sets and the corresponding norms are equivalent.

We first
recall some definitions and lemmas below in \cite{NS}.
Recall that the definition of atomic Hardy space with variable
exponent $H_{atom}^{p(\cdot),q}$.
Let $1<q\le\infty$ and $p(\cdot)\in \mathcal P^0\cap LH$. The function space
$H_{atom}^{p(\cdot),q}$ is defined to be the set of all distributions $f\in
\mathcal S'$ which can be written as
$f=\sum_j\lambda_ja_j$
in $\mathcal S'$, where $\{a_j, Q_j\}\subset \mathcal A(p(\cdot),q)$
with the quantities
\begin{equation*}
  \mathcal{A}(\{\lambda_j\}_{j=1}^\infty,\{Q_j\}_{j=1}^\infty)
  <\infty.
\end{equation*}
One define
$$
\|f\|_{H_{atom}^{p(\cdot),q}}\equiv\mathcal{A}(\{\lambda_j\}_{j=1}^\infty,\{Q_j\}_{j=1}^\infty).
$$

Let $q\gg1$ and $p(\cdot)\in \mathcal P^0\cap LH$.
It is well known that
$$
H^{p(\cdot)}=H^{p(\cdot),q}_{atom}.
$$

We also recall the notion of the Campanato space with variable exponent
$\mathfrak{L}_{q,p(\cdot),s}$.
Write that $\mathcal P^s$ is the set of all polynomials having degree at most $d$.

\begin{definition}\label{s4d1}
Let $p(\cdot)\in\mathcal P$, $d$ be a nonnegative integer and $1\le q<\infty$.
Then the Campanato space with variable exponent
$\mathfrak{L}_{q,p(\cdot),d}$ is defined to be the set of all $f\in L_{loc}^q$
such that
$$
\|f\|_{\mathfrak{L}_{q,p(\cdot),d}}=
\sup_{Q\subset \mathbb R^n}\frac{|Q|}{\|\chi_Q\|_{L^{p(\cdot)}}}
\left[\frac{1}{|Q|}\int_Q|f(x)-P_Q^df(x)|^qdx\right]^{\frac{1}{q}}
<\infty,
$$
where $P_Q^df$ denotes the unique polynomial
$P\in \mathcal P^d$ such that, for all $h\in \mathcal P^d$,
$
\int_Q[f(x)-P(x)]h(x)dx=0.
$
\end{definition}

Let $L^{q,d}_{comp}$ be all the set of all $L^q-$functions with
compact support. For a nonnegative integer $d$, let
$$
L^{q,d}_{comp}=\left\{f\in L^q_{comp}:
\int_{\mathbb R^n}f(x)x^\alpha dx=0,\;
|\alpha|\le d\right\}.
$$

\begin{lemma}\label{s4l1}
Suppose that $p(\cdot)\in LH$, $0<p^-\le p^+\leq1$,
$q>p^+$ and $d$ is a nonnegative integer such that $d\in(n/p^--n-1,\infty)$.
The dual of $H_{atom}^{p(\cdot),q}$,
denoted by $(H_{atom}^{p(\cdot),q})'$
is $\mathfrak{L}_{q',p(\cdot),d}$ in the following sense.\\
\noindent (1) For any $b\in \mathfrak{L}_{q',p(\cdot),d}$, the linear functional $L_b:=\int_{\mathbb R^n}b(x)dx$,
defined initially on $L^{q,d}_{comp}$,
has a  bounded extension
on $H_{atom}^{p(\cdot),q}$ with $\|L_b\|\le
C\|g\|_{\mathfrak{L}_{q',p(\cdot),d}}$.

\noindent (2) Conversely, every continuous linear functional $L$ on $H_{atom}^{p(\cdot),q}$
satisfies $L=L_b$ for some $b\in \mathfrak{L}_{q',p(\cdot),d}$ with $\|b\|_{\mathfrak{L}_{q',p(\cdot),d}}\le C\|L\|$.
\end{lemma}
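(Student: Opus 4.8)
The plan is to establish the two directions of the duality separately, following the classical Campanato-space duality argument adapted to the variable-exponent setting. For part (1), given $b\in\mathfrak{L}_{q',p(\cdot),d}$, I would first check that $L_b(f)=\int_{\mathbb R^n}b(x)f(x)\,dx$ is well defined on the dense subspace $L^{q,d}_{comp}$; this uses only that $b\in L^{q'}_{loc}$ and $f$ has compact support. The key point is to bound $|L_b(a)|$ uniformly over $(p(\cdot),q)$-atoms $a$ supported on a cube $Q$. Using the moment condition $\int a(x)x^\alpha\,dx=0$ for $|\alpha|\le d$, I can subtract the projection polynomial: $L_b(a)=\int_Q\bigl(b(x)-P_Q^d b(x)\bigr)a(x)\,dx$. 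Then H\"older's inequality with exponents $q$ and $q'$ gives
\begin{align*}
|L_b(a)|\le\Bigl(\int_Q|b(x)-P_Q^d b(x)|^{q'}\,dx\Bigr)^{1/q'}\|a\|_{L^q}
\le\|b\|_{\mathfrak{L}_{q',p(\cdot),d}}\,\frac{\|\chi_Q\|_{L^{p(\cdot)}}}{|Q|}\,|Q|^{1/q'}|Q|^{1/q}\frac{|Q|^{1/q}}{\|\chi_Q\|_{L^{p(\cdot)}}}\cdot|Q|^{-1/q},
\end{align*}
which after simplification is $\le C\|b\|_{\mathfrak{L}_{q',p(\cdot),d}}$ (the atom size estimate $\|a\|_{L^q}\le|Q|^{1/q}/\|\chi_Q\|_{L^{p(\cdot)}}$ is exactly what makes all the $|Q|$ powers cancel). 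Combining this with the atomic decomposition $f=\sum_j\lambda_j a_j$ and Lemma \ref{s2l6} yields $|L_b(f)|\le\sum_j|\lambda_j|\,|L_b(a_j)|\le C\|b\|_{\mathfrak{L}_{q',p(\cdot),d}}\,\mathcal A(\{\lambda_j\},\{Q_j\})$, and taking the infimum over all atomic decompositions gives $\|L_b\|\le C\|b\|_{\mathfrak{L}_{q',p(\cdot),d}}$ together with the continuous extension to $H_{atom}^{p(\cdot),q}$.

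For part (2), let $L\in(H_{atom}^{p(\cdot),q})'$. Fix a cube $Q$ and consider the space $L^{q,d}_{comp}(Q)$ of $L^q$ functions supported in $Q$ with vanishing moments up to order $d$. For such $f$, any $(p(\cdot),q)$-atom multiple: indeed $f/\bigl(\|f\|_{L^q}|Q|^{-1/q}\|\chi_Q\|_{L^{p(\cdot)}}\bigr)$ is a $(p(\cdot),q)$-atom, so $\|f\|_{H_{atom}^{p(\cdot),q}}\le C\|f\|_{L^q}|Q|^{-1/q}\|\chi_Q\|_{L^{p(\cdot)}}$. Hence $L$ restricted to $L^{q,d}_{comp}(Q)$ is a bounded functional of norm $\le C\|L\|\,|Q|^{-1/q}\|\chi_Q\|_{L^{p(\cdot)}}$. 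By the Hahn-Banach theorem and the duality $(L^q(Q))'=L^{q'}(Q)$, there is $b_Q\in L^{q'}(Q)$ representing $L$ on $L^{q,d}_{comp}(Q)$ with $\|b_Q\|_{L^{q'}(Q)}\le C\|L\|\,|Q|^{-1/q}\|\chi_Q\|_{L^{p(\cdot)}}$; moreover $b_Q$ is determined modulo $\mathcal P^d$. The standard consistency argument (on $Q_1\subset Q_2$ the two representatives differ by a polynomial in $\mathcal P^d$) lets me patch these into a single $b\in L^{q'}_{loc}$ with $b-P_Q^d b=b_Q-P_Q^d b_Q$ on each $Q$, and then
\begin{align*}
\frac{|Q|}{\|\chi_Q\|_{L^{p(\cdot)}}}\Bigl(\frac{1}{|Q|}\int_Q|b-P_Q^d b|^{q'}\,dx\Bigr)^{1/q'}
\le\frac{|Q|}{\|\chi_Q\|_{L^{p(\cdot)}}}\,|Q|^{-1/q'}\|b_Q\|_{L^{q'}(Q)}\le C\|L\|,
\end{align*}
so $b\in\mathfrak{L}_{q',p(\cdot),d}$ with $\|b\|_{\mathfrak{L}_{q',p(\cdot),d}}\le C\|L\|$. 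Finally $L=L_b$ on $\bigcup_Q L^{q,d}_{comp}(Q)$, which is dense in $H_{atom}^{p(\cdot),q}$, so $L=L_b$.

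The main obstacle I anticipate is the bookkeeping in the consistency/patching step of part (2): one must verify that the local representatives $b_Q$ can be reconciled on overlapping cubes modulo $\mathcal P^d$ and that the resulting global $b$ is independent of the exhausting sequence of cubes, which requires carefully tracking that differences of representatives lie in $\mathcal P^d$ and that $P_Q^d$ is the genuine $L^2(Q)$-orthogonal projection onto $\mathcal P^d$ so that $b-P_Q^d b$ is well-defined intrinsically. A secondary technical point is confirming that the condition $d\in(n/p^--n-1,\infty)$ is exactly what guarantees $d\ge d_{p(\cdot)}$, so that the atomic decomposition of Lemma \ref{s2l6} applies with moments up to order $d$ and dovetails with the moment condition imposed on $b$. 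Beyond these, all the estimates are routine applications of H\"older's inequality, the atom size normalization, and Lemma \ref{s2l6}; the argument is essentially the variable-exponent transcription of the classical $H^p$–Campanato duality, with $\|\chi_Q\|_{L^{p(\cdot)}}$ playing the role of $|Q|^{1/p}$.
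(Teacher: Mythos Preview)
The paper does not prove this lemma at all: it is explicitly \emph{recalled} from Nakai--Sawano \cite{NS} (see the sentence ``We first recall some definitions and lemmas below in \cite{NS}'' opening Section~4), so there is no in-paper proof to compare against. Your argument is the standard Campanato-space duality proof transplanted to the variable-exponent setting, and it is essentially correct; this is also the route taken in \cite{NS}. One minor remark: the displayed estimate in your part~(1) has some stray factors that do not belong---the clean computation is simply
\[
|L_b(a)|\le\Bigl(\int_Q|b-P_Q^db|^{q'}\Bigr)^{1/q'}\|a\|_{L^q}
\le\|b\|_{\mathfrak{L}_{q',p(\cdot),d}}\frac{\|\chi_Q\|_{L^{p(\cdot)}}}{|Q|}\,|Q|^{1/q'}\cdot\frac{|Q|^{1/q}}{\|\chi_Q\|_{L^{p(\cdot)}}}
=\|b\|_{\mathfrak{L}_{q',p(\cdot),d}},
\]
so you should tidy that line. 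The obstacles you flag (patching the local $b_Q$ modulo $\mathcal P^d$, and checking that $d>n/p^--n-1$ matches the atomic moment condition) are genuine but routine, exactly as in the constant-exponent case.
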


Define $\Delta_h^k$ to be a difference operator, which is defined inductively
by
$$
\Delta_h^1f=\Delta_hf\equiv f(\cdot+h)-f,
\quad\quad
\Delta_h^k\equiv\Delta_h^1\circ\Delta_h^{k-1},
\quad k\ge 2.
$$

\begin{definition}\label{s4d2}
Let $p(\cdot)\in\mathcal P$, and $d\in\mathbb N\cup\{0\}$.
Then the H\"older-Zygmund spaces with variable exponents,
$\mathcal H_d^{p(\cdot)}$, is defined to be the set of all
continuous functions
$f$
such that
$$
\|f\|_{\mathcal H_d^{p(\cdot)}}=
\sup_{x\in\mathbb R^n,h\neq0}\frac{|Q|}{\|\chi_Q\|_{L^{p(\cdot)}}}
\left|\Delta_h^{d+1}f(x)\right|
<\infty,
$$
where $Q=Q(x,|h|)$.
\end{definition}
Note that we still use
$\mathcal H_d^{p(\cdot)}$ to denote the above function space modulo the
polynomials of degree $d$.

\begin{lemma}\label{s4l2}
Suppose that $p(\cdot)\in LH$, $0<p^-\le p^+\leq1$.
Then the function spaces $\mathcal H_d^{p(\cdot)}$
and $\mathfrak{L}_{q,p(\cdot),d}$ are isomorphic
in the following sense.\\
1. For any $f\in \mathcal H_d^{p(\cdot)}$
we have $\|f\|_{\mathfrak{L}_{q,p(\cdot),d}}
\le C\|f\|_{\mathcal H_d^{p(\cdot)}}$.\\
2. Any element in $\mathfrak{L}_{q,p(\cdot),d}$
has a continuous representative. Moreover, whenever
continuous functions $f\in\mathfrak{L}_{q,p(\cdot),d}$,
then $f\in\mathcal H_d^{p(\cdot)}$
and we have $
\|f\|_{\mathcal H_d^{p(\cdot)}}
\le
C\|f\|_{\mathfrak{L}_{q,p(\cdot),d}}$.
\end{lemma}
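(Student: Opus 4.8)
The plan is to follow the classical template relating Campanato and H\"older--Zygmund spaces, replacing the powers of $|Q|$ that appear in the constant-exponent argument by the corresponding quantities $\|\chi_Q\|_{L^{p(\cdot)}}$. Three auxiliary facts will be used throughout. (i) The $L^2(Q)$-orthogonal projection $P_Q^d$ onto $\mathcal P^d$ is, up to a constant depending only on $n,d,q$ (seen by affinely rescaling $Q$ to the unit cube), a near-best $L^q(Q)$-approximant, i.e.\ $\|f-P_Q^df\|_{L^q(Q)}\le C\,E_d(f)_{L^q(Q)}$ with $E_d(f)_{L^q(Q)}=\inf_{P\in\mathcal P^d}\|f-P\|_{L^q(Q)}$, and the same rescaling yields the reverse H\"older-type estimate $\|P\|_{L^\infty(Q)}\le C\,(\frac{1}{|Q|}\int_Q|P|^q\,dx)^{1/q}$ for every $P\in\mathcal P^d$. (ii) Whitney's inequality $E_d(f)_{L^q(Q)}\le C\,\omega_{d+1}(f,\ell(Q))_{L^q(Q)}$, where $\omega_{d+1}(f,t)_{L^q(Q)}=\sup_{0<|h|\le t}\|\Delta_h^{d+1}f\|_{L^q(Q_h)}$ and $Q_h=\{x\in Q:x+ih\in Q\ \mathrm{for}\ 0\le i\le d+1\}$. (iii) The almost-monotonicity $\|\chi_{Q'}\|_{L^{p(\cdot)}}/|Q'|\le C\,\|\chi_Q\|_{L^{p(\cdot)}}/|Q|$ whenever $Q'\subset Q$, together with the doubling property $\|\chi_{3Q}\|_{L^{p(\cdot)}}\sim\|\chi_Q\|_{L^{p(\cdot)}}$; both follow from Lemma \ref{s2l2} and the standard comparison, valid under the $LH$ hypothesis, of $\|\chi_Q\|_{L^{p(\cdot)}}$ with powers of $|Q|$, which moreover makes $\|\chi_{Q_k}\|_{L^{p(\cdot)}}/|Q_k|$ decay geometrically along a nested chain $Q_0\supset Q_1\supset\cdots$ of dyadic cubes contracting to a point.

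To prove Part 1, fix a cube $Q$. By (i) and (ii),
\[
\left(\frac{1}{|Q|}\int_Q|f-P_Q^df|^q\,dx\right)^{1/q}\le C\,|Q|^{-1/q}\,\omega_{d+1}(f,\ell(Q))_{L^q(Q)}.
\]
If $0<|h|\le\ell(Q)$ and $x\in Q_h$, then $Q(x,|h|)\subset 3Q$ with $|Q(x,|h|)|\sim|h|^n\le\ell(Q)^n\sim|Q|$, so the hypothesis $f\in\mathcal H_d^{p(\cdot)}$ together with (iii) gives $|\Delta_h^{d+1}f(x)|\le\|f\|_{\mathcal H_d^{p(\cdot)}}\,\|\chi_{Q(x,|h|)}\|_{L^{p(\cdot)}}/|Q(x,|h|)|\le C\,\|f\|_{\mathcal H_d^{p(\cdot)}}\,\|\chi_Q\|_{L^{p(\cdot)}}/|Q|$, uniformly in such $x$ and $h$. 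Integrating the $q$-th power over $Q_h\subset Q$ and taking the supremum over $|h|\le\ell(Q)$ shows $|Q|^{-1/q}\omega_{d+1}(f,\ell(Q))_{L^q(Q)}\le C\,\|f\|_{\mathcal H_d^{p(\cdot)}}\,\|\chi_Q\|_{L^{p(\cdot)}}/|Q|$; multiplying by $|Q|/\|\chi_Q\|_{L^{p(\cdot)}}$ and taking the supremum over all cubes $Q$ yields $\|f\|_{\mathfrak L_{q,p(\cdot),d}}\le C\,\|f\|_{\mathcal H_d^{p(\cdot)}}$.

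To prove Part 2, I first construct the continuous representative. Given $f\in\mathfrak L_{q,p(\cdot),d}$ and a point $x_0$, take a chain $Q_0\supset Q_1\supset\cdots$ with $x_0\in Q_k$, $\ell(Q_k)=2^{-k}\ell(Q_0)$, and $Q_{k+1}$ a subcube of $Q_k$ of half its side length. Bounding $\|P_{Q_{k+1}}^df-P_{Q_k}^df\|_{L^q(Q_{k+1})}$ by $\|f-P_{Q_{k+1}}^df\|_{L^q(Q_{k+1})}+\|f-P_{Q_k}^df\|_{L^q(Q_{k+1})}$, passing from $Q_{k+1}$ to $Q_k$ in the second summand, and applying the norm equivalence of (i) on $Q_{k+1}$, one gets
\[
\|P_{Q_{k+1}}^df-P_{Q_k}^df\|_{L^\infty(Q_{k+1})}\le C\,\|f\|_{\mathfrak L_{q,p(\cdot),d}}\,\frac{\|\chi_{Q_k}\|_{L^{p(\cdot)}}}{|Q_k|},
\]
where (iii) has absorbed the contribution of $Q_{k+1}$. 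Summing the geometrically decaying series shows $P_{Q_k}^df$ converges locally uniformly to a continuous function that a Lebesgue-differentiation argument identifies with $f$; running the same telescoping inside a cube $Q$ along a chain contracting to a point $y\in Q$ (with the top cube equal to $Q$) also yields $|f(y)-P_Q^df(y)|\le C\,\|f\|_{\mathfrak L_{q,p(\cdot),d}}\,\|\chi_Q\|_{L^{p(\cdot)}}/|Q|$ for every $y\in Q$. Now fix $x$ and $h\neq0$, put $Q=Q(x,c\,|h|)$ with $c=c(d)$ large enough that $x+ih\in Q$ for $0\le i\le d+1$, and use that $\Delta_h^{d+1}$ annihilates $\mathcal P^d$ to write
\[
\Delta_h^{d+1}f(x)=\Delta_h^{d+1}(f-P_Q^df)(x)=\sum_{i=0}^{d+1}(-1)^{d+1-i}\binom{d+1}{i}\bigl(f(x+ih)-P_Q^df(x+ih)\bigr).
\]
Since each $x+ih$ lies in $Q$, the pointwise estimate gives $|\Delta_h^{d+1}f(x)|\le C\,\|f\|_{\mathfrak L_{q,p(\cdot),d}}\,\|\chi_Q\|_{L^{p(\cdot)}}/|Q|$, and as $|Q|\sim|h|^n\sim|Q(x,|h|)|$ and $\|\chi_Q\|_{L^{p(\cdot)}}\sim\|\chi_{Q(x,|h|)}\|_{L^{p(\cdot)}}$ (concentric cubes of comparable size), this is $\le C\,\|f\|_{\mathfrak L_{q,p(\cdot),d}}\,\|\chi_{Q(x,|h|)}\|_{L^{p(\cdot)}}/|Q(x,|h|)|$. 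Taking the supremum over $x$ and $h$ gives $\|f\|_{\mathcal H_d^{p(\cdot)}}\le C\,\|f\|_{\mathfrak L_{q,p(\cdot),d}}$.

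The main obstacle is the variable-exponent bookkeeping for $\|\chi_{Q'}\|_{L^{p(\cdot)}}$ as $Q'$ ranges over cubes of widely varying size and position: one must establish the almost-monotonicity of $\|\chi_{Q'}\|_{L^{p(\cdot)}}/|Q'|$ under inclusion of cubes, the doubling property, and above all the geometric decay of $\|\chi_{Q_k}\|_{L^{p(\cdot)}}/|Q_k|$ that makes the telescoping converge, which forces one to treat separately the regimes $\ell(Q)\le1$ and $\ell(Q)>1$ in the estimates for the variable Lebesgue norm of a characteristic function. Once these estimates are in hand, the remainder is the classical Campanato--Lipschitz machinery --- Whitney's inequality, norm equivalence on the finite-dimensional space of polynomials restricted to a cube, and the Lebesgue differentiation theorem --- carried out as in the constant-exponent case.
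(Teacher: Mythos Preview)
The paper does not prove this lemma. At the opening of Section~4 it says ``We first recall some definitions and lemmas below in \cite{NS}'', and Lemma~\ref{s4l2} is among the results quoted from Nakai--Sawano without argument; it is then invoked as a black box in the proof of Theorem~\ref{s4t1}. So there is no in-paper proof against which to compare your proposal.

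Your argument is the classical Campanato--H\"older--Zygmund equivalence --- Whitney's inequality and near-best polynomial approximation for Part~1, a telescoping chain of shrinking cubes plus the finite-dimensional norm comparison on $\mathcal P^d$ for Part~2 --- with the constant-exponent weight $|Q|^{1/p-1}$ replaced throughout by $\|\chi_Q\|_{L^{p(\cdot)}}/|Q|$. This is the natural route and is, in outline, what Nakai--Sawano do. The variable-exponent facts you isolate in~(iii) (almost-monotonicity from Lemma~\ref{s2l2}, doubling, and the local estimate $\|\chi_Q\|_{L^{p(\cdot)}}\sim|Q|^{1/p(x_Q)}$ for small cubes under $LH$) are precisely the substitutes needed. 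One point to watch: the convergence of your telescoping series in Part~2 rests on $\|\chi_{Q_k}\|_{L^{p(\cdot)}}/|Q_k|\sim|Q_k|^{1/p(x_0)-1}\to0$, which needs $p(x_0)<1$; at the borderline $p^+=1$ (where the Campanato space degenerates to a BMO-type object with no continuous representative in general) the argument as written does not close, so that endpoint should be treated or excluded explicitly.
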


Now we state the main result in this section.
\begin{theorem}\label{s4t1}
Suppose that $p(\cdot)\in LH$, $0<p^-\le p^+\leq1$, $1<q<\infty$ and
$d$ is a nonnegative integer such that $d\in(n/p^--n-1,\infty)$.
Then Carleson measure spaces with variable exponents $CMO^{p(\cdot)}$,
Campanato space with variable exponent $\mathfrak{L}_{q,p(\cdot),d}$
and H\"older-Zygmund spaces with variable exponents
$\mathcal H_d^{p(\cdot)}$ coincide as sets and
$$\|f\|_{CMO^{p(\cdot)}}\sim\|f\|_{\mathfrak{L}_{q,p(\cdot),d}}
\sim\|f\|_{\mathcal H_d^{p(\cdot)}}.$$
\end{theorem}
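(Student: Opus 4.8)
The plan is to deduce the theorem from the two duality descriptions already at hand, glued together by the isomorphism of Lemma~\ref{s4l2}. The crucial input is that, for $q$ large, the variable Hardy space coincides with the atomic space, $H^{p(\cdot)}=H^{p(\cdot),q}_{atom}$ with equivalent quasi-norms, so the two spaces share the same topological dual. Theorem~\ref{s3t1} represents $(H^{p(\cdot)})'$ as $CMO^{p(\cdot)}$, while Lemma~\ref{s4l1} represents $(H^{p(\cdot),q}_{atom})'$ as the Campanato space $\mathfrak{L}_{q',p(\cdot),d}$ with $1/q+1/q'=1$. Matching these two identifications yields $CMO^{p(\cdot)}=\mathfrak{L}_{q',p(\cdot),d}$ with comparable norms; Lemma~\ref{s4l2} then identifies $\mathfrak{L}_{q',p(\cdot),d}$ with $\mathcal H_d^{p(\cdot)}$, and since $\mathcal H_d^{p(\cdot)}$ carries no integrability exponent, a second use of Lemma~\ref{s4l2} spreads the equivalence to every $1<q<\infty$.

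Concretely, I would first fix $q$ large enough that $H^{p(\cdot)}=H^{p(\cdot),q}_{atom}$ and that the hypotheses of Lemma~\ref{s4l1} are met, which, under the standing assumptions $p(\cdot)\in LH$, $0<p^-\le p^+\le1$ and $d\in(n/p^--n-1,\infty)$, only requires $q>p^+$. For $g\in CMO^{p(\cdot)}$, Theorem~\ref{s3t1}(1) makes $L_g$ a bounded functional on $H^{p(\cdot),q}_{atom}$ with $\|L_g\|\le C\|g\|_{CMO^{p(\cdot)}}$, so Lemma~\ref{s4l1}(2) furnishes $b\in\mathfrak{L}_{q',p(\cdot),d}$ with $L_g=L_b$ and $\|b\|_{\mathfrak{L}_{q',p(\cdot),d}}\le C\|L_g\|$; testing the two representations against the class $L^{q,d}_{comp}$, which is dense in $H^{p(\cdot),q}_{atom}=H^{p(\cdot)}$ and on which both $L_g$ (via the Calder\'on reproducing formula of Lemma~\ref{s2l1}) and $L_b$ act by honest integration, forces $g=b$ modulo $\mathcal P^d$, giving $CMO^{p(\cdot)}\hookrightarrow\mathfrak{L}_{q',p(\cdot),d}$ with norm control. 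The converse inclusion is the symmetric argument: from $b\in\mathfrak{L}_{q',p(\cdot),d}$, Lemma~\ref{s4l1}(1) gives boundedness of $L_b$ on $H^{p(\cdot)}$, and Theorem~\ref{s3t1}(2) produces $g\in CMO^{p(\cdot)}$ with $L_g=L_b$ and $\|g\|_{CMO^{p(\cdot)}}\le C\|b\|_{\mathfrak{L}_{q',p(\cdot),d}}$, again identified modulo polynomials. Thus $CMO^{p(\cdot)}=\mathfrak{L}_{q',p(\cdot),d}$ with equivalent norms. Applying Lemma~\ref{s4l2} with exponent $q'$ gives $\mathfrak{L}_{q',p(\cdot),d}=\mathcal H_d^{p(\cdot)}$ with equivalent norms, and applying it once more for an arbitrary $1<q<\infty$ gives $\mathfrak{L}_{q,p(\cdot),d}=\mathcal H_d^{p(\cdot)}$ as well; chaining the three identities yields the asserted coincidence and norm equivalence for every admissible $q$.

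The step I expect to be the main obstacle is the bookkeeping that turns ``the two functionals coincide'' into ``the two representatives coincide modulo polynomials.'' The delicate point is that $CMO^{p(\cdot)}$ is, by definition, a space of distributions in $\mathcal S'_\infty$ (that is, modulo polynomials), whereas $\mathfrak{L}_{q',p(\cdot),d}$ consists of genuine $L^{q'}_{loc}$ functions with the polynomial correction $P^{d}_Q f$ built into its norm; one must verify that the pairing $L_g(f)=\langle f,g\rangle$ appearing in Theorem~\ref{s3t1} and the pairing $L_b(f)=\int_{\mathbb R^n}b(x)f(x)\,dx$ appearing in Lemma~\ref{s4l1} are one and the same bilinear form on a subspace that is simultaneously dense in $H^{p(\cdot)}$ and in $H^{p(\cdot),q}_{atom}$, the natural candidate being $L^{q,d}_{comp}$, on which a $CMO^{p(\cdot)}$ element is locally integrable after subtracting a polynomial and annihilates $\mathcal P^d$. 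Once this compatibility is in place, the uniqueness of the dual representative in each duality theorem closes the loop, and the remaining items --- the $q$-independence and the norm comparisons --- are immediate consequences of Lemma~\ref{s4l2}.
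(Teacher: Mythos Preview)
Your proposal is correct and follows essentially the same route as the paper: match the two duality descriptions $(H^{p(\cdot)})'=CMO^{p(\cdot)}$ and $(H^{p(\cdot),q}_{atom})'=\mathfrak{L}_{q',p(\cdot),d}$ via the identification $H^{p(\cdot)}=H^{p(\cdot),q}_{atom}$, then invoke Lemma~\ref{s4l2}. The only cosmetic difference is that the paper obtains the $q$-independence of the Campanato norm by citing an external result (\cite[Corollary~2.22]{ZYL}) rather than by your double application of Lemma~\ref{s4l2} through $\mathcal H_d^{p(\cdot)}$, and the paper does not dwell on the representative-matching issue you flag, arguing instead directly at the level of continuous linear functionals.
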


\begin{proof}
Applying Theorem \ref{s3t1} and Lemma \ref{s4l1} yields
$$(H_{atom}^{p(\cdot),q})'=\mathfrak{L}_{q',p(\cdot),d},\quad
(H^{p(\cdot)})'=CMO^{p(\cdot)}.$$
Assume that $q\ge1$ is sufficiently large
and $p(\cdot)\in \mathcal P^0\cap LH$.
Then by \cite[Theorem 4.6]{NS} we have
$$
H^{p(\cdot)}=H^{p(\cdot),q}_{atom},
$$
with
$\|f\|_{H^{p(\cdot)}}\sim\|f\|_{H^{p(\cdot),q}_{atom}}$.
For any given $f\in \mathfrak{L}_{q',p(\cdot),d}$,
we see that $f$ is a linear and continuous on
$H_{atom}^{p(\cdot),q}$.
That is, $\|x_n-x\|_{H_{atom}^{p(\cdot),q}}\rightarrow 0$
implies $|f(x_n)-f(x)|\rightarrow 0$.
On the other hand, for any $\|x_n-x\|_{H^{p(\cdot)}}\rightarrow 0$,
we have $\|x_n-x\|_{H^{p(\cdot)}}\sim
\|x_n-x\|_{H^{p(\cdot),q}_{atom}}\rightarrow 0$.
Then we have $|f(x_n)-f(x)|\rightarrow 0$.
So $f$ is also a linear and continuous on
$H^{p(\cdot)}$ and $f\in CMO^{p(\cdot)}$.
Therefore,
$$\mathfrak{L}_{q',p(\cdot),d}\subset CMO^{p(\cdot)}.$$
Moreover,
\begin{align*}
\|f\|_{CMO^{p(\cdot)}}&\sim
\|L_f\|_{(H^{p(\cdot)})'}
=\sup_{\|f\|_{H^{p(\cdot)}}\le 1}|L_f(h)|\\
&\le \sup_{\|Cf\|_{H_{atom}^{p(\cdot),q}}\le 1}|L_f(h)|
=C\|L_f\|_{(H_{atom}^{p(\cdot),q})'}
\sim\|f\|_{\mathfrak{L}_{q',p(\cdot),d}}.
\end{align*}
Similarly, we also have
$$CMO^{p(\cdot)}\subset \mathfrak{L}_{q',p(\cdot),d}$$
and
\begin{align*}
\|f\|_{\mathfrak{L}_{q',p(\cdot),d}}\le
C\|f\|_{CMO^{p(\cdot)}}.
\end{align*}
Thus, $CMO^{p(\cdot)}$
and $\mathfrak{L}_{q',p(\cdot),d}$
coincide as sets and
\begin{align}\label{s4i1}
\|f\|_{CMO^{p(\cdot)}}\sim\|f\|_{\mathfrak{L}_{q',p(\cdot),d}}.
\end{align}
According to \cite[Corollary 2.22]{ZYL},
for any $1<q<\infty$,
$f\in \mathfrak{L}_{q,p(\cdot),d}$
if and
only if
$f\in \mathfrak{L}_{1,p(\cdot),d}$
and
\begin{equation}
\label{s4i2}
\|f\|_{\mathfrak{L}_{q,p(\cdot),d}}
\sim\|f\|_{\mathfrak{L}_{1,p(\cdot),d}}.
\end{equation}

Furthermore, by Lemma \ref{s4l2},
$\mathcal H_d^{p(\cdot)}$
and $\mathfrak{L}_{q,p(\cdot),d}$ are isomorphic
and
\begin{align}\label{s4i3}
\|f\|_{\mathcal H_d^{p(\cdot)}}
\sim\|f\|_{\mathfrak{L}_{q,p(\cdot),d}}.
\end{align}

Combining both (\ref{s4i1}), (\ref{s4i2}) and (\ref{s4i3}),
we immediately obtain that
$$\|f\|_{CMO^{p(\cdot)}}\sim\|f\|_{\mathfrak{L}_{q,p(\cdot),d}}
\sim\|f\|_{\mathcal H_d^{p(\cdot)}}.$$
This proves Theorem \ref{s4t1}.
\end{proof}

\section{Applications}
In this section
we show that Calder\'{o}n-Zygmund singular integral operators are
bounded on $CMO^{p(\cdot)}$ via
using an argument of weak
density property.
Note that the weak
density property is very useful when
we deal with the bounedness of operators
on Carleson measure type spaces or Lipschitz type spaces
(see \cite{HH,LiW,L,LW,TZ}).
First we recall some necessary notations and definitions.

We say $K(x,y)\in C_c^\infty$ is
a standard kernel, if it is
defined for $x\neq y$, and satisfies the following
weaker version of the differential inequalities:
$$|K(x,y)|\leq C|x-y|^{-n};$$
if $|x-y|\geq2|y-y'|$,
$$|K(x,y)-k(x,y')|\leq C|y-y'|^{\gamma}|x-y|^{-n-\gamma}
$$and if $|x-y|\geq2|x-x'|$,
$$|K(x,y)-k(x',y)|\leq C|x-x'|^{\gamma}|x-y|^{-n-\gamma};
$$where $0<\gamma\leq1$.
We denote $K\in SK(\gamma)$.

When $K\in SK(\gamma)$, if $\varphi,\psi\in C^\infty_c$,
$\mbox{supp}\;\varphi\cap\mbox{supp}\;\psi=\emptyset,$
then
$$
\left<T\varphi,\psi\right>
=\int_{\mathbb R^n\times\mathbb R^n}K(x,y)\varphi(y)\psi(y)dydx.
$$
That is, for any $x\neq\mbox{supp}\;\varphi$
$$
T(\varphi)(x)
=\int_{\mathbb R^n}K(x,y)\varphi(y)dy.
$$

Suppose that $T$ is bounded on $L^{2}$.
The relation between $K$ and $T$ is that
$f\in L^2$ has compact support, then, outside the support of $f$,
the distribution $Tf$ agrees with the function
$$T(f)(x)=\int_{\mathbb{R}^{n}}K(x,y)f(y)dy,\quad x\notin {\rm supp} (f)$$
Then $T$ is Calder\'{o}n-Zygmund operator.
We denote $T\in CZO(\gamma)$.

The adjoint operator $T^\ast$ is defined by
$$
\left<T^\ast f,g\right>=\left<f,Tg\right>,
\quad f,g\in\mathcal S.
$$
It is associated with the standard kernel
$\tilde K(x,y)=K(y,x)$.

Note that
$T\in CZO(\gamma)$ can be extended to a
bounded linear operator on $H^q$ for all $\frac{n}{n+\gamma}<q<\infty$,
when $T^\ast(1)=0$,
and that its adjoint $T^\ast$ also
can be extended to a
bounded linear operator on $H^q$ for all $\frac{n}{n+\gamma}<q<\infty$,
when $T(1)=0$, for a proof, see \cite[Section 10, Theorem 1.1]{DH1}.
By repeating the similar argument to the proof of \cite[Theorem 5.5]{NS},
we can prove the following result:

\begin{proposition}\label{s5p1}
Suppose that $T\in CZO(\gamma)$, $p(\cdot)\in LH$
 and $\frac{n}{n+\gamma}<p^-\le p^+<\infty$. If $T^\ast(1)=0$,
then $T$ is a
bounded linear operator on $H^{p(\cdot)}$.
Similarly,
if $T(1)=0$,
then $T^\ast$ is a
bounded linear operator on $H^{p(\cdot)}$.
\end{proposition}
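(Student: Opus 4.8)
The plan is to reduce the boundedness of $T$ on $H^{p(\cdot)}$ to a size/regularity estimate on the composition $\psi_j \ast T \psi_{j'}$ of the Littlewood--Paley pieces with $T$, exactly as in the argument for \cite[Theorem 5.5]{NS}. First I would fix test functions $\varphi,\psi$ satisfying \eqref{s2d1} and, for $f \in H^{p(\cdot)} \cap L^2$ (a dense subclass, since the $L^2$-boundedness of $T$ is available), use the discrete Calder\'on identity of Lemma \ref{s2l1} to write $f = \sum_Q \langle f,\varphi_Q\rangle \psi_Q$. Applying $T$ termwise gives $Tf = \sum_Q \langle f,\varphi_Q\rangle\, T\psi_Q$, so by Lemma \ref{s3l1} it suffices to control $\|\mathcal G^d(Tf)\|_{L^{p(\cdot)}}$, i.e.\ to estimate $\psi_j \ast T f(2^{-j}\mathbf k)$ in terms of the coefficients $\langle f,\varphi_{Q'}\rangle$. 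Reassembling, this reduces to the almost-orthogonality bound for the kernel of $\psi_j \ast T \psi_{j'}$: using $T^\ast(1)=0$ (so that $T\psi_{j'}$ retains a cancellation condition), the size condition $|K(x,y)|\lesssim |x-y|^{-n}$ and the H\"older regularity of order $\gamma$ on $K$, one obtains an estimate of the form
\begin{align*}
|\psi_j \ast T\psi_{j'}(x,y)| \le C\, 2^{-|j-j'|\gamma}\,\frac{2^{(j\wedge j')n}}{(1+2^{(j\wedge j')}|x-y|)^{n+\gamma}},
\end{align*}
which is the analogue of \eqref{s2i2}. The restriction $\frac{n}{n+\gamma} < p^-$ enters precisely here, guaranteeing that the exponent $\gamma$ in the geometric factor $2^{-|j-j'|\gamma}$ is large enough to beat the loss $2^{n|j-j'|(1/p^- - 1)}$ coming from the $\ell^{p^-}$-type summation over cubes of different scales in the variable-exponent setting.

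With this kernel estimate in hand, the remaining work is the same chain of inequalities used in the proof of Proposition \ref{s3p2}: one passes from $\psi_j \ast Tf(x_Q)$ to a sum $\sum_{j'}\sum_{Q'} 2^{-|j-j'|\gamma} |Q'| \frac{2^{(j\wedge j')n}}{(1+2^{(j\wedge j')}|x_Q - x_{Q'}|)^{n+\gamma}} |\varphi_{j'}\ast f(x_{Q'})|$, applies H\"older's inequality (splitting the weight as in the proof of Theorem \ref{s2t1}) to move one copy of the decay factor outside, sums a geometric series in $j'$, and is left with a shifted dyadic average that is dominated by the Hardy--Littlewood maximal function applied to $\{|\varphi_{j'}\ast f|\}_{j'}$. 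Then the Fefferman--Stein vector-valued maximal inequality of Lemma \ref{s2p2} (valid since $p(\cdot) \in LH$) and Lemma \ref{s3l1} give
\begin{align*}
\|Tf\|_{H^{p(\cdot)}} \le C\, \|\mathcal G^d(f)\|_{L^{p(\cdot)}} \le C\,\|f\|_{H^{p(\cdot)}}.
\end{align*}
A density argument (using that $H^{p(\cdot)}\cap L^2$ is dense in $H^{p(\cdot)}$, together with the atomic characterization $H^{p(\cdot)} = H^{p(\cdot),q}_{atom}$ from \cite{NS}) then extends this to all of $H^{p(\cdot)}$, and one checks that $Tf$ is well defined in $\mathcal S'_\infty$ because $T^\ast(1)=0$ kills the obstruction to pairing against $\mathcal S_\infty$. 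The statement for $T^\ast$ under the hypothesis $T(1)=0$ follows by the same argument applied to the adjoint kernel $\tilde K(x,y) = K(y,x)$, since $(T^\ast)^\ast = T$.

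The main obstacle I anticipate is the derivation of the almost-orthogonality estimate for $\psi_j \ast T\psi_{j'}$ with the correct decay exponent. One must carefully track how much regularity $\gamma$ of the Calder\'on--Zygmund kernel is consumed: a factor of $\gamma$ is needed to produce the decay $2^{-|j-j'|\gamma}$ across scales, and the cancellation of $\psi_{j'}$ (not of $T\psi_{j'}$, which only has the single vanishing moment coming from $T^\ast(1)=0$) must be exploited in the high-frequency regime $j \ge j'$, while in the regime $j < j'$ one uses the cancellation of $\psi_j$ against the smoothness of $T\psi_{j'}$ away from its "singularity." Making this dichotomy precise in the variable-exponent framework — and verifying that the surviving decay rate $\gamma$ strictly exceeds $n(1/p^- - 1)$, which is exactly the content of $p^- > n/(n+\gamma)$ — is the technical heart of the proof; everything downstream is a routine adaptation of the machinery already developed in Sections 2 and 3.
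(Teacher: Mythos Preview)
Your approach is sound but takes a genuinely different route from what the paper indicates. The paper gives no detailed proof of Proposition~\ref{s5p1}; it simply remarks that one repeats the argument of \cite[Theorem~5.5]{NS}. That argument is atomic: one uses $H^{p(\cdot)} = H^{p(\cdot),q}_{atom}$, and for each $(p(\cdot),q)$-atom $a$ supported on a cube $Q$ one controls $Ta$ by splitting into a local piece (handled by the $L^q$-boundedness of $T$) and a tail piece (where the cancellation $\int a = 0$ together with the H\"older-$\gamma$ regularity of $K$ in the second variable yields decay $|x - c_Q|^{-(n+\gamma)}$). The hypothesis $T^\ast(1)=0$ guarantees $\int Ta = 0$, so $Ta$ is a molecule, and summation over atoms via the variable-exponent machinery (Lemma~\ref{s2l6} and the vector-valued maximal inequality) closes the estimate. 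The restriction $p^- > n/(n+\gamma)$ enters when one checks that the tail decay $|x|^{-(n+\gamma)}$ is enough to put $Ta$ back into $H^{p(\cdot)}$.

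Your Littlewood--Paley route via the almost-orthogonality of $\psi_j \ast T \ast \psi_{j'}$ is equally valid and is, in fact, more consonant with the discrete square-function framework used elsewhere in the paper (Theorem~\ref{s2t1}, Proposition~\ref{s3p2}). One small clarification: the scale decay $2^{-|j-j'|\gamma}$ in your kernel estimate comes from the cancellation of $\psi_j$ and $\psi_{j'}$ themselves combined with the two-sided H\"older regularity of $K$, not from the vanishing moment of $T\psi_{j'}$; the condition $T^\ast(1)=0$ is really needed, as you correctly note at the end, to ensure that $Tf$ is well defined in $\mathcal S'_\infty$ so that Lemma~\ref{s3l1} applies. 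What your approach buys is a proof entirely within the $\varphi$-transform formalism, bypassing atoms; what the atomic approach buys is a shorter and more elementary reduction that avoids establishing the almost-orthogonality lemma for $T$ directly.
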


Our main result in this section is the following theorem.

\begin{theorem}\label{s5t1}
Suppose that $T\in CZO(\gamma)$, $p(\cdot)\in LH$
 and $\frac{n}{n+\gamma}<p^-\le p^+\le 1$.
 If $T(1)=0$,
then $T$ admits a bounded extension from
$CMO^{p(\cdot)}$  to itself.
\end{theorem}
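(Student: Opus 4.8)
The plan is to prove the boundedness of $T$ on $CMO^{p(\cdot)}$ by dualizing: since $(H^{p(\cdot)})' = CMO^{p(\cdot)}$ by Theorem \ref{s3t1}, and $T(1)=0$ implies $T^\ast$ is bounded on $H^{p(\cdot)}$ by Proposition \ref{s5p1}, the operator $T = (T^\ast)^\ast$ should be bounded on the dual space. The delicate issue is that this adjoint relation is only immediate on a dense subspace; the ``weak density'' argument is precisely the device needed to push it to all of $CMO^{p(\cdot)}$.

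Concretely, first I would fix $g \in CMO^{p(\cdot)}$ and give meaning to $Tg$ as an element of $\mathcal{S}'_\infty$ by setting $\langle Tg, \varphi\rangle := \langle g, T^\ast\varphi\rangle$ for $\varphi \in \mathcal{S}_\infty$; here one uses that $T^\ast$ maps $\mathcal{S}_\infty$ (or at least the relevant test functions $\varphi_Q$ with cancellation) into a space on which $g$ acts, using the kernel estimates $K\in SK(\gamma)$ and $T(1)=0$ to guarantee the needed decay and cancellation of $T^\ast\varphi_Q$. Second, using the discrete characterization of $CMO^{p(\cdot)}$ from Corollary \ref{s2c1}, I would estimate
\begin{align*}
\|Tg\|_{CMO^{p(\cdot)}} \sim \sup_{P}\left\{\frac{|P|}{\|\chi_P\|^2_{p(\cdot)}}\sum_{j=-\log_2\ell(P)}^\infty \sum_{Q\subset P}|\varphi_j \ast (Tg)(x_Q)|^2 |Q|\right\}^{1/2},
\end{align*}
and write $\varphi_j\ast(Tg)(x_Q) = \langle g, T^\ast \widetilde{\varphi}_{j}(x_Q - \cdot)\rangle$. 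Third, I would expand $g$ via the Calderón reproducing formula from Lemma \ref{s2l1}, $g = \sum_{Q'}\langle g,\psi_{Q'}\rangle \varphi_{Q'}$, so that the matrix coefficients become $\langle T^\ast\widetilde{\varphi}_j(x_Q-\cdot), \varphi_{Q'}\rangle$; these are controlled by an almost-orthogonality estimate of the same type as \eqref{s2i2}, now incorporating the Calderón--Zygmund kernel smoothness $\gamma$ and the cancellation $T(1)=0$ — giving a bound like $2^{-|j-j'|\gamma}$ times the usual spatial decay. At that point the estimate has the identical structure to the proof of Theorem \ref{s2t1}, and the same summation over $P'\subset 3P$, the annular decomposition $\{B_i\}$, the far-field term, and Lemma \ref{s2l2} for the ratios $\|\chi_{P'}\|_{p(\cdot)}/\|\chi_P\|_{p(\cdot)}$ close the argument with $\|Tg\|_{CMO^{p(\cdot)}} \le C\|g\|_{CMO^{p(\cdot)}}$.

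The main obstacle is the first step: making the definition of $Tg$ rigorous and independent of the approximation, i.e., showing that for $g \in CMO^{p(\cdot)}$ — which a priori is only a distribution with growth — the pairing $\langle g, T^\ast\varphi\rangle$ is well-defined and that $Tg$ is genuinely recovered as a limit. This is where the \emph{weak density property} enters: one takes $g_N \to g$ in an appropriate weak-$\ast$ sense with $\|g_N\|_{CMO^{p(\cdot)}}$ uniformly bounded (e.g. truncations or mollifications of the Littlewood--Paley expansion of $g$), verifies the inequality $\|Tg_N\|_{CMO^{p(\cdot)}} \le C\|g_N\|_{CMO^{p(\cdot)}}$ for the nice functions $g_N$ by the computation above, and then passes to the limit using lower semicontinuity of the $CMO^{p(\cdot)}$ norm under this weak convergence together with the fact that $T^\ast$ is continuous on $H^{p(\cdot)}$. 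Once this framework is set up — exactly as in \cite{HH,LiW,LW,TZ} — the estimate itself is a variant of Theorem \ref{s2t1} and requires no genuinely new ideas, only the replacement of the Schwartz cancellation by the $SK(\gamma)$ regularity and $T(1)=0$.
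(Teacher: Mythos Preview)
Your proposal is correct in spirit and reaches the same conclusion, but the route you take for the \emph{core estimate} is considerably heavier than the paper's. Both arguments share the same two-stage architecture: (i) prove $\|Tf\|_{CMO^{p(\cdot)}}\le C\|f\|_{CMO^{p(\cdot)}}$ on a nice dense class, and (ii) extend to all of $CMO^{p(\cdot)}$ by the weak density property (Proposition~\ref{s5p2}) plus lower semicontinuity/Fatou. Where you diverge is in step (i): you propose to unpack $\langle Tg,\psi_Q\rangle$ through the Calder\'on reproducing formula, establish almost-orthogonality estimates of the type $|\langle T^\ast\varphi_Q,\varphi_{Q'}\rangle|\lesssim 2^{-|j-j'|\gamma}(\cdots)$ from the $SK(\gamma)$ regularity and $T(1)=0$, and then rerun the entire Plancherel--P\^olya machinery of Theorem~\ref{s2t1}. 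The paper bypasses all of this: for $f\in CMO^{p(\cdot)}\cap L^2$ it simply writes
\[
|\langle Tf,g\rangle|=|\langle f,T^\ast g\rangle|\le C\|f\|_{CMO^{p(\cdot)}}\|T^\ast g\|_{H^{p(\cdot)}}\le C\|f\|_{CMO^{p(\cdot)}}\|g\|_{H^{p(\cdot)}},
\]
using Theorem~\ref{s3t1} and Proposition~\ref{s5p1} as black boxes, and then invokes the \emph{converse} direction of Theorem~\ref{s3t1} to identify the functional $g\mapsto\langle Tf,g\rangle$ with an element $h\in CMO^{p(\cdot)}$ of norm $\le C\|f\|_{CMO^{p(\cdot)}}$; since $\langle Tf,\psi_Q\rangle=\langle h,\psi_Q\rangle$ for all $\psi_Q$, one gets $\|Tf\|_{CMO^{p(\cdot)}}=\|h\|_{CMO^{p(\cdot)}}$ immediately. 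No new kernel or almost-orthogonality estimates are needed---everything is already encoded in Proposition~\ref{s5p1}. Your approach would work (the molecule estimates you need are standard), and has the merit of being self-contained at the level of wavelet coefficients, but it duplicates effort: the $H^{p(\cdot)}$-boundedness of $T^\ast$ already internalizes exactly those almost-orthogonality facts, and the paper exploits this by working purely at the level of the duality pairing.
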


The following proposition
on the weak density property for
$CMO^{p(\cdot)}$  plays a key role in the proof of Theorem \ref{s5t1}.
\begin{proposition}\label{s5p2}
Let $p(\cdot)\in LH$
 and $0<p^-\le p^+\le 1$.
If $f\in CMO^{p(\cdot)}$, then there exist
a sequence $\{f_m\}\in
CMO^{p(\cdot)}\cap L^2$ such that
$f_m$ converges to
$f$ in the distribution sense. Furthermore,
$$\|f_m\|_{CMO^{p(\cdot)}}\le C\|f\|_{CMO^{p(\cdot)}},
\quad \mbox{for}\quad f\in CMO^{p(\cdot)}.$$
\end{proposition}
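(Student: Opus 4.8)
The plan is to construct the approximating sequence by truncating the discrete Calder\'on reproducing formula both in the frequency parameter $j$ and in the spatial location of the dyadic cubes $Q$, and then to check that the truncation lands in $L^2$, stays bounded in $CMO^{p(\cdot)}$, and converges to $f$ in $\mathcal S'_\infty$. Concretely, for $f\in CMO^{p(\cdot)}$ start from the reproducing formula $f=\sum_{Q}\langle f,\varphi_Q\rangle\psi_Q$ of Lemma \ref{s2l1}, and for each $m\in\mathbb N$ define
\begin{align*}
f_m:=\sum_{\substack{Q=Q_{j\mathbf k}\\ |j|\le m,\ |\mathbf k|\le 2^{|j|}m}}\langle f,\varphi_Q\rangle\psi_Q,
\end{align*}
i.e. keep only the cubes $Q$ with $2^{-m}\le \ell(Q)\le 2^m$ that meet the ball $B(0,Cm)$. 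Since this is a finite sum of Schwartz functions with uniformly controlled $\langle f,\varphi_Q\rangle$ (finiteness of $\|f\|_{CMO^{p(\cdot)}}$ forces $\sup_Q |Q|^{-1/2}|\langle f,\varphi_Q\rangle|<\infty$ on any bounded set of scales and locations, via testing the $CMO^{p(\cdot)}$ norm against a fixed cube $P$), each $f_m$ lies in $\mathcal S\subset L^2$; in particular $f_m\in CMO^{p(\cdot)}\cap L^2$.

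Next I would verify the uniform bound $\|f_m\|_{CMO^{p(\cdot)}}\le C\|f\|_{CMO^{p(\cdot)}}$. Using Corollary \ref{s2c1}, it suffices to estimate
\begin{align*}
\sup_{P}\frac{|P|}{\|\chi_P\|^2_{p(\cdot)}}\sum_{j\ge -\log_2\ell(P)}\sum_{\substack{Q\subset P\\ \ell(Q)=2^{-j-N}}}|\varphi_j\ast f_m(x_Q)|^2|Q|,
\end{align*}
and since $\varphi_j\ast\psi_{Q'}$ obeys the almost-orthogonality estimate \eqref{s2i2}, the passage from $f$ to the truncated sum $f_m$ only deletes terms from a positive sum after the Plancherel--P\^olya reduction of Theorem \ref{s2t1}. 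Thus the argument is a repetition of the proof of Theorem \ref{s2t1} with the extra indicator $\mathbf 1_{\{|j|\le m,\ |\mathbf k|\le 2^{|j|}m\}}$ inserted, which can only decrease each nonnegative summand, giving the bound with the \emph{same} constant $C$, uniformly in $m$. (One must be slightly careful that restricting to $Q'$ near the origin is compatible with the supremum over arbitrary cubes $P$; this is handled exactly as the decomposition into the shells $B_i$ and $E_m^i$ in Theorem \ref{s2t1}.)

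Finally, convergence $f_m\to f$ in the distribution sense: for any $\phi\in\mathcal S_\infty$, the series $\sum_Q\langle f,\varphi_Q\rangle\langle\psi_Q,\phi\rangle$ converges absolutely (this is precisely the content that makes the reproducing formula of Lemma \ref{s2l1} valid in $\mathcal S'_\infty$, combined with the rapid decay of $\langle\psi_Q,\phi\rangle$ in both the scale $j$ and the distance from $\mathrm{supp}\,\phi$), so the tail $\langle f-f_m,\phi\rangle=\sum_{Q\notin F_m}\langle f,\varphi_Q\rangle\langle\psi_Q,\phi\rangle\to 0$ as $m\to\infty$ by dominated convergence for series. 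I expect the main obstacle to be the uniform $CMO^{p(\cdot)}$ bound: one has to make sure that the spatial truncation to $|\mathbf k|\le 2^{|j|}m$ does not destroy the cancellation/orthogonality structure that powers Theorem \ref{s2t1}, so the cleanest route is to observe that truncation is a coordinatewise contraction on the nonnegative double sum appearing after Plancherel--P\^olya, hence the estimate is inherited verbatim with no new constant. Everything else (membership in $L^2$, distributional convergence) is soft.
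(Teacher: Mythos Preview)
Your proposal is correct but takes a different route from the paper. The paper works with the \emph{continuous} Calder\'on identity $f=\sum_{j\in\mathbb Z}\psi_j\ast\psi_j\ast f$ in $\mathcal S'_\infty$ and sets $f_m=\sum_{|j|\le m}\psi_j\ast\psi_j\ast f$, truncating only in the frequency index $j$. Membership of $f_m$ in $L^2$ is then obtained from the pointwise bound $|\psi_j\ast f|\le C$ (extracted from the $CMO^{p(\cdot)}$ norm via the estimates in Theorem~\ref{s2t1}) combined with $\psi_j\in L^2$ and Young's inequality; the uniform $CMO^{p(\cdot)}$ bound follows from a direct almost-orthogonality estimate on $\varphi_j\ast f_m=\sum_{|j'|\le m}\varphi_j\ast\psi_{j'}\ast\psi_{j'}\ast f$ through Corollary~\ref{s2c1}.

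Your approach instead truncates the \emph{discrete} $\varphi$--$\psi$ formula of Lemma~\ref{s2l1} in both scale and spatial location. This makes $f_m\in L^2$ trivial (finite sum of Schwartz functions) and reduces the uniform $CMO^{p(\cdot)}$ bound to the observation that zeroing out entries of the coefficient sequence $\{\langle f,\varphi_Q\rangle\}_Q$ can only decrease its $c^{p(\cdot)}$ norm; so in fact you can quote Proposition~\ref{s3p3} (boundedness of $T_\psi:c^{p(\cdot)}\to CMO^{p(\cdot)}$ and of $S_\varphi:CMO^{p(\cdot)}\to c^{p(\cdot)}$) directly rather than rerun the shell decomposition of Theorem~\ref{s2t1}. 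Your route buys a cleaner $L^2$ step at the cost of an extra spatial cutoff; the paper's route avoids the spatial cutoff entirely but needs the pointwise $L^\infty$ control of $\psi_j\ast f$ to reach $L^2$.
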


\begin{proof}
By Lemma \ref{s2l1}, we have the following Caldr\'{o}n identity
$$
f(x)=\sum_{j\in\mathbb Z}\psi_{j}\ast\psi_{j}\ast f(x) \quad\quad \mbox{in}\quad \mathcal{S}'_\infty.
$$

The partial sum of the above series will be denoted by $f_m$ and is given by
$$
f_m(x)=\sum_{|j|\le m}\psi_{j}\ast\psi_{j}\ast f(x).
$$

Then we claim that
$$
\|f_m\|_{L^2}<\infty.
$$
In fact, applying the fact that
$|\psi_{j}\ast f(x)|\le C$
proved in Theorem \ref{s2t1}
yields that
\begin{align*}
\|\psi_{j}\ast\psi_{j}\ast f\|_{L^2}\le C.
\end{align*}

For any $g\in\mathcal{S}_\infty$,
we obtain that
\begin{align*}
|\left<f-f_m,g\right>|&=
\lim_{n\rightarrow\infty}|\left<f_n-f_m,g\right>|\\
&\le\liminf_{n\rightarrow\infty}
\left|\left<\sum_{m<|j|\le n}\psi_{j}\ast\psi_j\ast f, g\right>\right|
\rightarrow 0,  \quad\mbox{as}\quad m\rightarrow 0.
\end{align*}
Thus, $f_m\in L^2$ and converges to $f$ in the distribution sense.

To conclude the proof, note that
$$
\psi_{j}\ast f_m(x)=\sum_{|j'| \le m}\psi_{j}\ast \psi_{j'} \ast\psi_{j'}\ast f(x)
$$

and by Corollary \ref{s2c1}, it follows that
\begin{align*}
\|f_m\|_{CMO^{p(\cdot)}}
\sim&
\sup_{P}\left\{\frac{|P|}{\|\chi_P\|^2_{p(\cdot)}}
\sum_{j=-\log_2\ell(P)}^\infty\sum_{\substack{Q\subset P\\
\ell(Q)=2^{-j-N}}}
|\varphi_j\ast f_m(x_Q)|^2|Q|\right\}^{1/2}.
\end{align*}

Again applying the almost orthogonal estimate and Corollary \ref{s2c1},
for any $j\ge0$ we have
$\|f_m\|_{CMO^{p(\cdot)}}\le
C\|f\|_{CMO^{p(\cdot)}}$.

Therefore, the proof of Proposition \ref{s5p2} is completed.
\end{proof}

Now we prove of Theorem \ref{s5t1}.\\
\noindent\textit{Proof of Theorem \ref{s5t1}.}\quad
First we show that the Calder\'{o}n-Zygmund operator $T$
is a bounded operator on $CMO^{p(\cdot)}\cap L^2$.
Applying Theorem \ref{s3t1} and Proposition \ref{s5p1}
yield that
$$
|\left<Tf,g\right>|=|\left<f,T^\ast g\right>|\le
\|f\|_{CMO^{p(\cdot)}}\|T^\ast g\|_{H^{p(\cdot)}}
\le C\|f\|_{CMO^{p(\cdot)}}\|g\|_{H^{p(\cdot)}}.
$$
That is, for each $f\in CMO^{p(\cdot)}\cap L^2$,
$L_f(g)=\left<Tf,g\right>$ is a continuous linear functional
on $H^{p(\cdot)}\cap L^2$.
Since $H^{p(\cdot)}\cap L^2$ is dense in $H^{p(\cdot)}$,
$L_f$
can be extended to a continuous linear functional
on $H^{p(\cdot)}$ with
$$\|L_f\|\le C\|f\|_{CMO^{p(\cdot)}}.$$

Conversely, by Theorem \ref{s3t1} again, there exists
$h\in CMO^{p(\cdot)}$ such that $\left<Tf,g\right>
=\left<h,g\right>$ for $g\in H^{p(\cdot)}\cap L^2$ with
$$\|h\|_{CMO^{p(\cdot)}}\le C\|L_f\|.$$

Thus,
\begin{align*}
\|Tf\|_{CMO^{p(\cdot)}}
&=\sup_{P}\left\{\frac{|P|}{\|\chi_P\|^2_{p(\cdot)}}
\int_{\mathbb R^n}\sum_{Q\subset
P}|Q|^{-1}|\left<Tf, \psi_Q\right>|^2\chi_{Q}(x)dx\right\}^{1/2}\\
&=\sup_{P}\left\{\frac{|P|}{\|\chi_P\|^2_{p(\cdot)}}
\int_{\mathbb R^n}\sum_{Q\subset
P}|Q|^{-1}|\left<h, \psi_Q\right>|^2\chi_{Q}(x)dx\right\}^{1/2}\\
&=\|h\|_{CMO^{p(\cdot)}}\le C\|L_f\|\le C\|f\|_{CMO^{p(\cdot)}}.
\end{align*}

Next we extend this result to $CMO^{p(\cdot)}$
via an argument of weak density property.
Suppose that $f\in CMO^{p(\cdot)}$.
By Proposition \ref{s5p2}, we can choose a sequence
$\{f_m\}\subset CMO^{p(\cdot)}\cap L^2$
with $$\|f_m\|_{CMO^{p(\cdot)}}\le
C\|f\|_{CMO^{p(\cdot)}}$$
such that
$f_m$ converges to
$f$ in the distribution sense.
Therefore,
for $f\in CMO^{p(\cdot)}$,
define
$$
\left<Tf,g\right>=\lim_{m\rightarrow\infty}
\left<Tf_m,g\right>,\quad\mbox{for}\quad g\in H^{p(\cdot)}
\cap L^2.
$$
In fact, we have
$\left<T(f_i-f_j),g\right>=\left<f_i-f_j,T^\ast(g)\right>$,
where $f_i-f_j$ and $g$ belong to $L^2$.
By Proposition \ref{s5p1}, we have $T^\ast\in H^{(p(\cdot))}\cap L^2$.
Applying Proposition \ref{s5p2} again, we get that
$$
\left<T(f_i-f_j),g\right>
=\left<f_i-f_j,T^\ast(g)\right>\rightarrow 0
$$
as $j,\;k\rightarrow \infty$.
Therefore, $Tf$ is well defined
and
$$
\left<Tf,g\right>=\lim_{m\rightarrow\infty}\left<Tf_m,g\right>
$$
for any $g\in H^{p(\cdot)}\cap L^2$ and $f_m\in CMO^{p(\cdot)}\cap L^2$.
Then by Fatou's Lemma, for each dyadic cube $P$ in $\mathbb R^n$,
\begin{align*}
&\left\{\frac{|P|}{\|\chi_P\|^2_{p(\cdot)}}
\int_{\mathbb R^n}\sum_{Q\subset
P}|Q|^{-1}|\left<Tf, \psi_Q\right>|^2\chi_{Q}(x)dx\right\}^{1/2}\\
&\le \liminf_{m\rightarrow\infty}
\left\{\frac{|P|}{\|\chi_P\|^2_{p(\cdot)}}
\int_{\mathbb R^n}\sum_{Q\subset
P}|Q|^{-1}|\left<Tf_m, \psi_Q\right>|^2\chi_{Q}(x)dx\right\}^{1/2}.
\end{align*}

Hence,
$$
\|Tf\|_{CMO^{p(\cdot)}}
\le \liminf_{m\rightarrow\infty}
\|Tf_m\|_{CMO^{p(\cdot)}}
\le C\|f_m\|_{CMO^{p(\cdot)}}
\le C\|f\|_{CMO^{p(\cdot)}}.
$$
This completes this proof.
$\hfill\Box$

As a corollary, we obtain that the convolution type
Calder\'on-Zygmund singular integrals
are bounded on $CMO^{p(\cdot)}$.

\begin{corollary}\label{s5c1}
Assume that $p(\cdot)\in LH$
and $0<p^-\le p^+\le 1$.
Let $k\in\mathcal S$ and
$$
\sup_{x\in\mathbb R^n}
|x|^{n+m}|\nabla^mk(x)|<\infty\quad(m\in\mathbb N\cup\{0\}).
$$
Define a convolution operator $\mathcal {T}$ by
$$\mathcal Tf(x)=k\ast f(x)\quad (f\in L^2).$$
Then $\mathcal T$ admits a bounded extension from
$CMO^{p(\cdot)}$  to itself.
\end{corollary}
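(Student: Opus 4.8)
The plan is to bypass the duality machinery and read the bound off directly from the discrete characterization of $CMO^{p(\cdot)}$ in Corollary \ref{s2c1}, together with the almost-orthogonality estimates already used in the proof of Theorem \ref{s2t1}. First I would make sense of $\mathcal Tf$ when $f\in CMO^{p(\cdot)}\subset\mathcal S'_\infty$: since $k\in\mathcal S$, for every $g\in\mathcal S_\infty$ the transform $\widehat{\tilde k\ast g}=\widehat{\tilde k}\,\widehat g$ still vanishes to infinite order at the origin, so $\tilde k\ast g\in\mathcal S_\infty$, and hence $\mathcal Tf:=k\ast f$ extends to all of $\mathcal S'_\infty$ by $\langle\mathcal Tf,g\rangle:=\langle f,\tilde k\ast g\rangle$; this coincides with ordinary convolution on $L^2$ because $\widehat k\in L^\infty$. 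Observe also that the hypothesis $\sup_x|x|^{n+m}|\nabla^mk(x)|<\infty$ is automatic for Schwartz $k$, so the genuine content is only that $k$ is smooth with rapid decay.

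Next, fix a kernel $\varphi$ as in (\ref{s2d1}). The key observation is that $\varphi_j\ast\mathcal Tf=(\varphi_j\ast k)\ast f$, where $\varphi_j\ast k$ is, uniformly in $j$, a normalized Schwartz bump at scale $2^{-j}$ with all moments vanishing (its Fourier transform equals $\widehat\varphi(2^{-j}\cdot)\,\widehat k$, supported in $\{2^{j-1}\le|\xi|\le2^{j+1}\}$). Inserting the Calder\'on reproducing formula $f=\sum_{j'\in\mathbb Z}2^{-j'n}\sum_{Q'}\tilde\varphi_{j'}\ast f(x_{Q'})\psi_{j'}(\cdot,x_{Q'})$ (valid in $\mathcal S'_\infty$) and applying the almost-orthogonality bound (\ref{s2i3}) to $\int_{\mathbb R^n}(\varphi_j\ast k)(z-x)\psi_{j'}(x,x_{Q'})\,dx$, one arrives at precisely the pointwise inequality
\begin{align*}
\sup_{z\in Q}|\varphi_j\ast\mathcal Tf(z)|\le C\sum_{j'\in\mathbb Z}\sum_{Q'}|Q'|\big(\inf_{z\in Q'}|\tilde\varphi_{j'}\ast f(z)|\big)\frac{2^{-|j-j'|L}\,2^{-(j\wedge j')M}}{(2^{-(j\wedge j')}+|x_Q-x_{Q'}|)^{n+M}}
\end{align*}
already obtained in the proof of Theorem \ref{s2t1}. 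From here the estimation of $\sup_P\frac{|P|}{\|\chi_P\|_{p(\cdot)}^2}\sum_{j\ge-\log_2\ell(P)}\sum_{Q\subset P}(\sup_{z\in Q}|\varphi_j\ast\mathcal Tf(z)|)^2|Q|$ is word for word the argument there (the decomposition into $I_1$, $I_2$, $II$, Lemma \ref{s2l2}, and the choices $L>\max\{1,n(2/p^--2)\}$, $M>n(2/p^--1)$), and it is bounded by $C\|f\|_{CMO^{p(\cdot)}}^2$. Since Theorem \ref{s2t1} and Corollary \ref{s2c1} show that finiteness of this quantity is equivalent to membership in $CMO^{p(\cdot)}$ with comparable norms, we conclude $\mathcal Tf\in CMO^{p(\cdot)}$ and $\|\mathcal Tf\|_{CMO^{p(\cdot)}}\le C\|f\|_{CMO^{p(\cdot)}}$; if one prefers to mirror Theorem \ref{s5t1}, one proves this first for $f\in CMO^{p(\cdot)}\cap L^2$ and then passes to the general case by Proposition \ref{s5p2} and Fatou's lemma.

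A shorter alternative is to quote Theorem \ref{s5t1} directly: $K(x,y)=k(x-y)$ is a standard kernel, $\mathcal T$ and $\mathcal T^\ast$ are bounded on $L^2$, and $\mathcal T(1)=\widehat k(0)$ is a constant, hence $0$ in $CMO^{p(\cdot)}$. The one subtlety here is that the constraint $n/(n+\gamma)<p^-$ in Proposition \ref{s5p1} and Theorem \ref{s5t1} comes solely from the finite H\"older smoothness $\gamma\le1$ of a generic standard kernel, whereas a Schwartz convolution kernel supplies smoothness of every order, so the $H^{p(\cdot)}$-boundedness of $\mathcal T$ and $\mathcal T^\ast$ in fact persists on the whole range $0<p^-\le p^+\le1$; establishing this upgraded mapping property is the only real obstacle in the second route. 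For the operators of the present corollary I would nevertheless favor the first route, since the almost-orthogonality bound for $(\varphi_j\ast k)\ast\psi_{j'}$ is immediate, the restriction on $p^-$ never appears, and the remaining work is just a routine repetition of the estimates in Theorem \ref{s2t1}.
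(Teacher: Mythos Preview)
Your proposal is correct. The paper gives no separate proof of Corollary \ref{s5c1}; it simply records the statement as an immediate consequence of Theorem \ref{s5t1}, which is exactly your ``shorter alternative'': $K(x,y)=k(x-y)$ is a standard kernel, $\mathcal T$ and $\mathcal T^\ast$ are $L^2$-bounded, and $\mathcal T(1)$ is the constant $\widehat k(0)$, hence zero in $CMO^{p(\cdot)}$.

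Your primary route is genuinely different. Instead of passing through the duality $(H^{p(\cdot)})'=CMO^{p(\cdot)}$ and the $H^{p(\cdot)}$-boundedness of $\mathcal T^\ast$, you work directly on the $CMO^{p(\cdot)}$ side: the functions $\eta^{(j)}$ defined by $\widehat{\eta^{(j)}}(\xi)=\widehat\varphi(\xi)\widehat k(2^j\xi)$ form a bounded family in $\mathcal S$ with Fourier support in the fixed annulus $\{1/2\le|\xi|\le2\}$, so the almost-orthogonality estimate (\ref{s2i3}) applies to $(\varphi_j\ast k)\ast\psi_{j'}$ with constants uniform in $j$, and the rest is a verbatim rerun of the $I_1,I_2,II$ estimates in Theorem \ref{s2t1}. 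The payoff of your approach is that the full range $0<p^-\le p^+\le1$ falls out automatically, whereas Theorem \ref{s5t1} as stated carries the restriction $p^->n/(n+\gamma)$ with $\gamma\le1$; the paper leaves implicit the (true but unproved there) fact that for a Schwartz convolution kernel the $H^{p(\cdot)}$-boundedness in Proposition \ref{s5p1} persists for all $p^->0$. Your direct argument sidesteps this gap entirely, at the cost of repeating the Plancherel--P\^olya computation rather than quoting a single theorem.
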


Combining both Theorems \ref{s4t1} and Theorem \ref{s5t1},
we also have the following corollary.

\begin{corollary}\label{s5c2}
Suppose that $T\in CZO(\gamma)$.
Let $p(\cdot)\in LH$,
 $\frac{n}{n+\gamma}<p^-\le p^+\leq1$, $1<q<\infty$ and
$d$ is a nonnegative integer such that $d\in(n/p^--n-1,\infty)$.
 If $T(1)=0$,
then $T$ can be extended to a bounded operator on
$\mathfrak{L}_{q,p(\cdot),d}$  or
$\mathcal H_d^{p(\cdot)}$.
\end{corollary}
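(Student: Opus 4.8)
The plan is to deduce Corollary~\ref{s5c2} by transporting the boundedness of $T$ on $CMO^{p(\cdot)}$, already established in Theorem~\ref{s5t1}, through the identification of function spaces provided by Theorem~\ref{s4t1}. First I would record that, under the stated hypotheses on $p(\cdot)$, $q$ and $d$, Theorem~\ref{s4t1} gives
\[
CMO^{p(\cdot)}=\mathfrak{L}_{q,p(\cdot),d}=\mathcal H_d^{p(\cdot)}
\]
as sets, together with the norm equivalences $\|f\|_{CMO^{p(\cdot)}}\sim\|f\|_{\mathfrak{L}_{q,p(\cdot),d}}\sim\|f\|_{\mathcal H_d^{p(\cdot)}}$. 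Since the side condition $\frac{n}{n+\gamma}<p^-\le p^+\le 1$ of Corollary~\ref{s5c2} is stronger than the hypothesis $0<p^-\le p^+\le 1$ needed for Theorem~\ref{s4t1}, the latter applies verbatim, so all three spaces agree with equivalent norms.

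Next, observing that $T\in CZO(\gamma)$ with $\frac{n}{n+\gamma}<p^-\le p^+\le 1$ and $T(1)=0$, the hypotheses of Theorem~\ref{s5t1} are met, and hence $T$ admits a bounded extension from $CMO^{p(\cdot)}$ to itself, say with operator norm $A=\|T\|_{CMO^{p(\cdot)}\to CMO^{p(\cdot)}}$. I would then simply chase the norm equivalences: for $f\in\mathfrak{L}_{q,p(\cdot),d}=CMO^{p(\cdot)}$,
\[
\|Tf\|_{\mathfrak{L}_{q,p(\cdot),d}}\le C\,\|Tf\|_{CMO^{p(\cdot)}}\le C\,A\,\|f\|_{CMO^{p(\cdot)}}\le C\,A\,\|f\|_{\mathfrak{L}_{q,p(\cdot),d}},
\]
and likewise with $\mathcal H_d^{p(\cdot)}$ in place of $\mathfrak{L}_{q,p(\cdot),d}$, using the corresponding equivalence from Theorem~\ref{s4t1}. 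This shows $T$ extends to a bounded operator on each of $\mathfrak{L}_{q,p(\cdot),d}$ and $\mathcal H_d^{p(\cdot)}$, which is the assertion.

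The only genuine point requiring care is consistency of the \emph{extension}: Theorem~\ref{s5t1} defines $Tf$ for $f\in CMO^{p(\cdot)}$ via the weak density procedure of Proposition~\ref{s5p2} (approximating $f$ by $f_m\in CMO^{p(\cdot)}\cap L^2$ and setting $\langle Tf,g\rangle=\lim_m\langle Tf_m,g\rangle$ for $g\in H^{p(\cdot)}\cap L^2$). One should note that since $CMO^{p(\cdot)}$, $\mathfrak{L}_{q,p(\cdot),d}$ and $\mathcal H_d^{p(\cdot)}$ coincide as sets of distributions, this same extension serves for all three, so there is no ambiguity in what ``$Tf$'' means on $\mathfrak{L}_{q,p(\cdot),d}$ or $\mathcal H_d^{p(\cdot)}$; it is literally the same distribution, and the displayed norm estimates above then bound it in the respective norms. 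I expect the main (and essentially only) obstacle is this bookkeeping about the extension and the matching of hypotheses; no new analytic estimate is needed, as all the substantive work has already been done in Theorems~\ref{s4t1} and~\ref{s5t1}.
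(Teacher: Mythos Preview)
Your proposal is correct and matches the paper's approach exactly: the paper states this corollary as an immediate consequence of combining Theorem~\ref{s4t1} and Theorem~\ref{s5t1}, without giving any further proof. Your argument (norm equivalence from Theorem~\ref{s4t1} transported through the $CMO^{p(\cdot)}$-boundedness from Theorem~\ref{s5t1}) is precisely what is intended, and the care you take about consistency of the extension is more than the paper itself provides.
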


\section*{Acknowledgements}

The project is sponsored by Natural Science Foundation of Jiangsu Province of China (grant no. BK20180734), Natural Science Research of Jiangsu Higher Education Institutions of China (grant no. 18KJB110022) and Nanjing University of Posts and Telecommunications Science Foundation (grant no. NY217151).

\bibliographystyle{amsplain}

\end{document}